\def\xybiglabels{\def\labelstyle{\textstyle}} 
\author{Jeffrey C.\ Morton and Roger Picken}
\thanks{The authors would like to acknowledge Dany Majard for many helpful
discussions in the writing of this paper. This work was supported in part by
the Graduiertenkolleg 1670 ``Mathematics Inspired by String Theory and QFT'' of 
the Mathematics Department of the University of Hamburg, and by the
Funda\c{c}\~{a}o para a Ci\^{e}ncia e a Tecnologia, Portugal, through projects
PTDC/MAT/101503/2008, EXCL/MAT-GEO/0222/2012 and 
 UID/MAT/04459/2013.}
\address{Department of Mathematics and Statistics,\\
The University of Toledo, 2801 W. Bancroft St. \\
Toledo, OH 43606-3390, USA\\[5pt]
 Center for Mathematical Analysis, Geometry and Dynamical Systems,\\
Instituto Superior T\'ecnico, Universidade de Lisboa, \\
Av. Rovisco Pais, 1049-001 Lisboa, Portugal\\
}
\title {Transformation Double Categories Associated to 2-Group Actions}
\keywords{2-group, categorical group, crossed module, action, double category, adjoint action}
\newcommand{\defn}[1]{\textbf{#1}}
\newcommand{\ra}{\rightarrow}
\newcommand{\Ra}{\Rightarrow}
\newcommand{\opname}[1]{\operatorname{#1}}
\newcommand{\cat}[1]{\boldsymbol{\opname{#1}}}
\newcommand{\squaremor}[1]{\begin{array}{|lcr|}\hline & {#1} & \\
    \hline\end{array}}
\newcommand{\wquot}{/\!\!/}
\newcommand{\G}{\mathcal{G}}
\newcommand{\C}{\cat{C}}
\newcommand{\act}{\blacktriangleright}
\begin{document}

\maketitle
\begin{abstract}
 Transformation
  groupoids associated to group actions capture the interplay between
  global and local symmetries of structures described in set-theoretic
  terms. This paper examines the analogous situation for structures
  described in category-theoretic terms, where symmetry is expressed
  as the action of a 2-group $\G$ (equivalently, a categorical group)
  on a category $\C$. It describes the construction of a
  transformation groupoid in diagrammatic terms, and considers this
  construction internal to $\cat{Cat}$, the category of
  categories. The result is a double category $\C \wquot \G$ which
  describes the local symmetries of $\C$. We define this and describe
  some of its structure, with the adjoint action of $\G$ on itself as
  a guiding example.
\end{abstract}

\section{Introduction}

Symmetry is a fundamental notion both in mathematics and in
physics.  In this paper, we consider the symmetries of categories: 
in particular, if a symmetry of a category $\cat{C}$ is represented as an 
endofunctor of $\cat{C}$, one also has 'symmetries of symmetries', because 
there are natural transformations between these functors. These two 
levels of symmetry can be represented by using a 2-group. Thus we consider 
actions of 2-groups on categories, and develop the analog of the 
``transformation groupoid'' construction, which captures the local 
view of the symmetries for a group action on a set.

\subsection{Symmetries, group actions and 2-group actions}

The usual mathematical representation of symmetry is as a
\textit{group action} on a set, or perhaps on a manifold, vector
space, or whatever sort of structure is of interest. 
Not only the objects on which the groups act, but also the groups 
themselves may have various sorts of structure  
- one may be interested in algebraic groups, or Lie groups, for
example. In particular, symmetry in this sense is described by the
action of a \textit{group object} in an appropriate category: an
object in the category equipped with structure maps, such as the 
multiplication map $m : G \times G \ra G$, satisfying the axioms which 
define a group.

Even superficially very different constructions, such as quantum
groups, fit this pattern to some extent. A quantum group is a rather broad
concept with many variations, but the starting point is the idea of a
Hopf algebra, a bialgebra with a certain structure. Quantum groups
play a role in describing symmetry in noncommutative geometry and
other algebraic settings. Hopf algebras may be seen as group objects
in $\cat{Alg}^{op}$, the opposite category of some category of
algebras. (Which category of algebras depends on the context within
noncommutative geometry).

The notion of a group action, also in the broader sense above, 
describes what are known as \textit{global} symmetries:
 global operations on the object which supports the symmetry,
leaving it in some sense unchanged. The exact sense in which it is
to be unchanged, and what a transformation of the object can consist
of, is the guiding criterion in finding its group of symmetries. On
the other hand, there is also a \textit{local} concept of symmetry.
Following Weinstein \cite{weinstein-symmetry}, local symmetry may be 
described in terms of \textit{groupoids}. 
Recall that groupoids are categories in which all morphisms
are invertible. Now the entire set which supports the symmetry, such 
as the set of points of a space, forms the objects of the groupoid, and
the morphisms of the groupoid represent symmetry relations which relate 
one point to another, making them ``indistinguishable'' under the 
relation in question. Such symmetry relations may or may not arise from a
global group action on the set, and thus the local ``groupoid approach'' 
to symmetry is more fine-grained than the global ``group action approach''.

Of course, given a global group action, it can also be viewed from an 
equivalent local 
perspective, in terms of the \textit{transformation groupoid} associated to 
the global
group action. Note that not every groupoid representing local symmetry 
is a transformation groupoid\footnote{Some easy counter-examples appear as 
sub-groupoids of transformation groupoids. For instance, suppose we have 
an action of a Lie group $G$ on a 
manifold $M$. We can 
consider a small neighborhood $U$ in $M$, and take the sub-groupoid of 
the transformation groupoid associated to the action of $G$ with only 
those objects corresponding to points in $U$. This has all the 
morphisms taking points in $U$ to other points in $U$, even if $U$ does 
not contain any entire orbit of the $G$ action. That is, it retains 
all the 'local' information about $M$'s symmetries that can be 
detected in $U$.}. In our generalization of group actions to 2-group 
actions, we will develop both global and local perspectives, and show 
them to be equivalent.

The construction of a transformation groupoid, coming from a group action 
on a set, makes sense not only in the category of sets, but also in
other contexts. Typical examples would include a Lie group acting on a
manifold, a topological group acting on a topological space, or an
algebraic group acting on a variety. In reasonable situations (which
always apply in a topos, such as the category of sets, or of
topological spaces), one can construct a transformation groupoid which 
captures the
local picture implied by such global symmetry groups. These will be,
respectively, Lie groupoids, topological groupoids, or algebraic
groupoids in the cases mentioned above. 
We can say that each of these situations is an instance of a general
process, which can be described diagrammatically, and independently of the
type of objects playing the role of ``sets''. In other words, there is a
general construction of a transformation groupoid, which makes sense
in a category (of a suitable kind). Each of the cases we mentioned are
examples of this construction ``internal to'' a chosen ambient
category, such as topological spaces, manifolds, varieties, and so
forth.

The situation we describe in this paper generalizes these ideas in 
a different direction. We are interested in describing symmetry
of \textit{categories} by taking the notion of a group action on a set to
a higher categorical level, both for the group and for the set it acts on.
Thus we will be studying the action of a 2-group on a category.

In order to understand this step, it is useful to recall that a group is a
particular type of category, with just a single object and all morphisms 
invertible. The product on the group is the composition of morphisms.
Now, given an object $X$ in a
category, $\cat{X}$, there is a sub-category $End(X)$ of $\cat{X}$ 
with one object $X$ and morphisms consisting of all the endomorphisms of $X$. 
Restricting to the invertible such morphisms, one gets $Aut(X)$. This is a
subcategory of $\cat{X}$, but it is also a group in the sense just
mentioned. It is, indeed, the full symmetry group of the object $X$. In these terms,    
the action of a group $G$ on $X$ is described by a functor from the category 
$G$ to the category of automorphisms of $X$, say  $\phi:G \ra Aut(X)$. 

Now, just as a group may be regarded as a special sort of category, so
too a 2-group can be seen as a special kind of 2-category, namely one with
just a single object and invertible 1- and 2-morphisms.  
Given a category $\cat{C}$, i.e. an object in the 2-category $\cat{Cat}$, one has
endofunctors from $\cat{C}$ to itself, but also natural
transformations between such endofunctors. So $End(\cat{C})$ is a
2-category with a single object $\cat{C}$, whose 1-morphisms are endofunctors of 
$\cat{C}$  and whose 2-morphisms are natural transformations between endofunctors. 
If we restrict to only those endofunctors and natural 
transformations which are invertible,
we have $Aut(\cat{C})$, which is a 2-group. It
is, indeed, the full symmetry 2-group of $\cat{C}$. Then, in complete analogy with
the action of a group $G$ on a set $X$, the action of a 
2-group $\G$ on a category $\cat{C}$ is described by a 2-functor from the 
2-category 
$\G$ to the 2-category of automorphisms of $C$:
$$
\Phi:\G \ra Aut(\cat{C}).
$$
Intuitively, following Baez and Lauda in \cite{hda5}, we are considering not only symmetries of
$\cat{C}$ (the 1-morphism level of $\Phi$), but also ``symmetries between symmetries'' (the 
2-morphism level of $\Phi$).

As an aside, we note that there are other approaches to generalizing the action of $G$ on $X$ to higher
settings. Some recent work has been done on categories equipped
with actions of a group $G$, (note, not a 2-group) and their ``equivariantizations'' (see,
for example, \cite{kirillov, naidu, burciu-natale} among
others). These equivariantizations generalize the notion of the fixed 
points of a group
action on a (possibly structured) set. From our perspective this is 
a rather
special case of the most general sense of symmetry for categories since we expect to have
not only morphisms between categories (namely, functors), but also morphisms between these morphisms
(namely, natural transformations). This means that, for us, the notion of
``symmetry'' of a category is inherently somewhat more subtle than that 
for a set, or
even for any set with an additional structure, such as a topological
space.
 Another generalization, going beyond ours, is to
consider the action of 2-groupoids, or yet more general categorical structures, 
on a category \cite{brown-mackenzie}. A 2-groupoid is a 2-category with invertible 1- and 2-morphisms, but 
it may have more than one object. However, we wish to focus on the 2-group case, because it naturally 
generalizes the important and ubiquitous notion of actions of groups, and because the application 
we have in mind (in higher gauge theory, to be described shortly) motivated us to study 2-group actions,
 and not the action of anything more general. Finally, we note the recent work by J. Elgueta \cite{elgueta},
where a weak 2-group acts on a groupoid by equivalences, rather than by endofunctors. Here the self-equivalences of 
an object in a 2-category naturally constitute a weak 2-group.
This is not directly 
comparable to our approach, since weak 2-groups are used throughout in Elgueta's paper.

Returning to our main discussion, the essential point in this paper is that 2-group actions, too, can be
understood in terms of an internal construction like those described above, i.e. transformation 
groupoids internal to some category. The ambient category
is now $\cat{Cat}$, the category whose objects are (small) categories and whose morphisms are functors. 
Then we can use the fact that an alternative way to
see 2-groups is as categorical groups, namely group objects in
$\cat{Cat}$. Starting with an ordinary group action on a set, the corresponding transformation groupoid
is obtained from a function, $\hat{\phi} : G \times X \ra X$, satisfying the usual properties for an action.
Here $G$ is treated as a set, although the properties involve the group operation on $G$. In the same way, 
a functor:
$$
\hat{\Phi}:\G\times \C \rightarrow \C 
$$
satisfying analogous properties to $\hat{\phi}$, and where $\G$ is treated as a category as opposed to a 2-category,
is equivalent to an action of the 2-group $\G$ on the category $\C$, as described above 
(Theorem \ref{thm:phifunctorial}). From $\hat{\Phi}$ one obtains the description of the action as 
a transformation groupoid internal to $\cat{Cat}$. In this way we get a local symmetry picture for the global action of a 2-group
on a category, just like the global and local pictures that we had for the action of an ordinary group.

We have been using two different understandings of a 2-group action on a category $\cat{C}$. The view where the 2-group is taken to be a group object in $\cat{Cat}$ is generally called ``internal''. By analogy, we may call the other ``external'', i.e. in terms of a 2-functor from the 2-category $\G$ to the 2-category $Aut(\cat{C})$, whose sole object is $\cat{C}$ . The ``internal'' description makes sense in any category where a group object can be defined, but describes a 2-group action only in $\cat{Cat}$. The ``external'' description always describes the action of a 2-group, by its very definition.
The existence of both internal and
``external'' views of symmetry for a category relies on the fact that
$\cat{Cat}$ is a \textit{closed} category, that is, a category with an
 ``internal $\opname{Hom}$'', so that the morphisms
from $A$ to $B$ form an object $\opname{Hom}(A,B)$ within the category.
Thus $Aut(\cat{C})$ can  be seen
as a category, equipped with certain structure maps which make it into a
2-group.

Now, the internal, local symmetry picture associated with $\hat{\Phi}$
above gives rise
to a transformation groupoid in the same ambient category as our group
object, and the object it acts on. So for categories, we find that
this local symmetry structure is most naturally encoded by a category
(indeed, it will be a groupoid) internal to $\cat{Cat}$ itself. Such a
structure is called $\cat{Cat}$-category, or often a \textit{double
  category} (though to avoid ambiguity, we will reserve this name for a
distinct but equivalent structure). 
In particular, this structure is not generally just a 2-category. An
external view of it shows that it has two different kinds of morphism,
denoted horizontal and vertical, and higher morphisms which naturally
have the shape of squares.\footnote{The shape of higher morphisms is a typical
issue in higher-categorical constructions: see e.g. \cite{chenglauda,
  leinster} for discussion of variants of higher categories.} We will show
in Section \ref{sec:transdoublecat} 
that there are several complementary perspectives for viewing this structure,
all highlighting particular features of the 2-group action. The groupoid 
internal to $\cat{Cat}$ perspective has a dual description as a category internal
to $\cat{Gpd}$ (the category of groupoids), and a third, symmetric perspective views
the structure as a double category (becoming a double groupoid when the acted-on 
category $\cat{C}$ is a groupoid). 
Thus our treatment here demonstrates that there is extra subtlety, through these
``variations in shape'', when we try to extend the concept of symmetry to higher
structures.  

\subsection{Motivation from higher gauge theory}

Our motivation for the present study comes from geometry, although this will play no
direct role in this paper. We will, however, describe a little of this
motivation here, and in a subsequent paper we will give more details
about the example which prompted us to study this question.

The example we had in mind comes from \textit{higher gauge theory} (HGT)
\cite{basch-hgt}. This is a general program of developing analogs to
concepts in gauge theory, for higher-categorical groups. It provides
generalizations of, among other things, the theory of connections on
bundles over manifolds. These analogs include the study of connections
on (non-abelian) gerbes \cite{giraud,murray-gerbes,martinspickeni,
martinspickenii,schreiberwaldorfii}. These entities have been the
study of considerable interest in recent years, particularly since
they can be used to describe theories in which one can naturally
describe the parallel transport of extended objects such as
(one-dimensional) strings or (higher-dimensional) ``membranes''.

Behind the current paper is a desire to better understand the
symmetries of the ``moduli spaces'' for such theories. That is, in
ordinary gauge theory, given a manifold $M$ and a Lie group $G$, there
is a space of all connections on a given principal $G$-bundle over
$M$. Furthermore, there is a group which acts on this space, the group
of all gauge transformations. There is a local picture of this
symmetry, in which both connections and gauge transformations are
described in terms of certain forms and functions on $M$. The relation
between the global symmetries and this local picture is
well-understood. The local picture describes a groupoid, whose objects
are the connections, coming from the global group action.

In the case of connections on gerbes, there is also a well-understood
local picture in terms of forms and functions. However, these
naturally form something more complex than a groupoid. The
relationship between this local picture and a notion of global
symmetries is less well-established. In our subsequent paper, we will
show a simple example of how the local picture arises from a global symmetry 2-group
acting on a category which plays the role of the moduli space of
connections on gerbes over $M$. For even higher-categorical
structures, known as $n$-gerbes, one can describe the transport of
higher-dimensional objects, or $n$-branes. This naturally has
connections to string theory \cite{schreiber-sati-stasheff,
  baez-perez}, and to other areas in theoretical physics.

Our main specific motivation within HGT relates to work 
of Yetter \cite{yettqft},
and Martins and Porter \cite{martinsporter}, extending a certain field
theory (the Dijkgraaf-Witten model) to 2-groups. One of the ultimate
goals in our project is to reproduce for these theories a
groupoid-based approach to such a model \cite{morton-etqft}, using the
double groupoids we find here, relating these field theories to the
representations of the double groupoids.

While the motivation from HGT is of interest to us, it is not necessary to
understand the present paper. For the moment, we are simply interested in
understanding better how the relationship between local and global
symmetries applies when the entity whose symmetries are involved
happens to be a category. Since the notion of symmetry is one of the
most fundamental, and categories have proven to be a vital and
important part of modern mathematics, the present study should be of interest well
beyond the context of our original example, and applicable in a variety of areas.

\subsection{Results and outline}

The main results of this work are structural theorems relating to the action of a strict
2-group $\G$ on a category $\C$, and the different ways in which this action may be described.

We begin in Section \ref{sec:2groups} with a discussion of (strict) 2-groups,
and various equivalent definitions which are useful in talking about
them. This includes the correspondence between 2-groups and crossed
modules and the definition of the double groupoid associated to 
the 2-group $\G$,
which is both an introduction to the notion of double category and a
useful 2-dimensional diagrammatic tool for 2-group calculations. 

 This
is followed by Section \ref{sec:2group-actions} which defines both the global and local
notions of a 2-group $\G$ acting on a category $\C$ (see the first part of the introduction), i.e. 
Definitions \ref{def:2grp_action} and \ref{def:2grp_action-Phi-hat}, and shows the equivalence 
between the two definitions in Theorem \ref{thm:phifunctorial}. This is followed in subsection 
\ref{adjoint} by a 
 basic example, namely the
adjoint action of a 2-group on itself, analogous to the adjoint action
of a group on itself by conjugation. (In the forthcoming paper on
higher gauge theory, we show that the adjoint action of $\G$ on itself 
describes higher gauge theory for the simple case when the manifold is the
circle, $S^1$.)

  In our main Section \ref{sec:transdoublecat},
we start by describing the construction of the transformation groupoid associated
to a group action on a set, in a diagrammatic form, which captures the
description of this groupoid in terms of elements. In 
subsection \ref{sec:transcat-cat}, repeating this
construction in $\cat{Cat}$ gives a groupoid internal to $\cat{Cat}$, 
as shown in Theorem \ref{thm:transf-gpd}. 
We develop a few different points of view on this internal groupoid
 in subsection \ref{sec:transdouble}, first taking 
a transposed internal view, giving rise to a category internal to $\cat{Gpd}$, 
the category of groupoids (Theorem \ref{thm:transposetransform}), and then 
giving a symmetrical definition 
in terms of a double category (Theorem \ref{thm:trans-dbl-cat}). The transposed viewpoint turns out
to have close connections to ordinary transformation groupoids for
certain group actions. Further aspects of these viewpoints are 
developed in subsection \ref{sec:struc-trans-double-cat}, and in particular we 
show that there are two 2-categories asociated to the transformation double category, 
namely the horizontal 2-category and the vertical 2-category, Theorems \ref{thm:horiztwocat}  and 
\ref{thm:verttwocat}. 
Our standard example, that of the adjoint
action of a 2-group on itself, gives a concrete case throughout, which
we develop in detail in \ref{sec:trans-adjoint}.

\section{2-Groups and 2-Group Actions on a Category}\label{sec:2groups}

In this paper, we will be describing certain actions of 2-groups. The
structures known as 2-groups are also sometimes called categorical
groups, $\mathbf{gr}$-categories or groupal groupoids. Moreover, all
of these can be shown to be equivalent to crossed modules (of
groups). As the abundance of terminology suggests, there are several
conceptually different, but logically equivalent, definitions which
can be given for these structures. We will primarily use the term
``2-group'', but in fact it will be useful for our discussion to be
able to move back and forth between the different definitions. In
Section \ref{sec:2gpdefns} we will recall those definitions which will
be used, and note how they are equivalent.

This prepares the ground for section \ref{sec:2group-actions}, where
 we will consider from two of these points of view how 2-groups
can act on categories, and the ``transformation'' structure which
results. This will be the analog of the transformation groupoid
associated to a group action on a set.

Note that in the following, to avoid cumbersome notation, 
 we use the notation $\cat{X}^{(0)}$ and $\cat{X}^{(1)}$ to
denote, respectively, the objects and morphisms of a category
$\cat{X}$, and similar notation for the object and morphism maps of a
functor. For a 2-category $\cat{X}$, when appropriate, we denote the 2-morphisms
by $\cat{X}^{(2)}$.

\subsection{2-Groups, Categorical Groups, and Crossed Modules}\label{sec:2gpdefns}

Here we lay out three definitions of equivalent structures: 2-groups,
categorical groups, and crossed modules. In this section, we will be
careful to distinguish the three, but in the rest of the paper we will
generally use the term ``2-group''. The main point of this section is
to highlight the well-known result that there is an equivalence
between the three definitions. 

\subsubsection{2-Groups as 2-Categories}\label{sec:2group-2cat}

One very useful definition of a 2-group is motivated by analogy to the
definition of a group as a (small) one-object category whose morphisms
are all invertible. In this case, the elements of the group are the
set of morphisms of the category, and the group multiplication is the
composition.

This highlights the way the definition of a category generalized the
notion of ``composition'' which begins with algebraic gadgets such as
groups. One might equally well define a (small) category in algebraic
language as a unital semigroupoid. Taking the definition of category
as more basic gives a group as a special case. While of course not the
standard definition of a group, it is at least straightforward to
generalize to ``higher'' groups in the sense of higher category
theory.

(We will assume some familiarity with 2-categories in what follows,
though for readers who are not so familiar we suggest the succinct
note by Leinster \cite{leinster-bb} as a starting point, and the more
comprehensive survey by Lack \cite{lack-bicat} and Chapter 7 of
Borceux \cite{borceux} for more detail.)

\begin{definition}
  A \defn{2-group} $\G$ is a 2-category with one object ($\G^{(0)} =
  \{ \star \}$), for which all 1-morphisms $\gamma \in \G^{(1)}$ and
  2-morphisms $\chi \in \G^{(2)}$ are invertible.
\end{definition}
Note that in the case of $\chi \in \G^{(2)}$, we require invertibility
with respect to both horizontal and vertical composition.

Let us unpack this definition more explicitly.

Suppose $\G$ is a 2-group. There are various 1-morphisms from $\star$
to itself, which have a composition operation, since all such
morphisms have matching source and target:
\begin{equation}
  \xymatrix{
    \star  & \star \ar[l]^{\gamma_1} & \star \ar[l]^{\gamma_2}
  } =
  \qquad
  \xymatrix{ 
    \star & \star \ar[l]^{\gamma_1 \circ \gamma_2}
  }
\end{equation}
Composition of morphisms is the multiplication, which is therefore
associative, and every morphism must have an inverse by the definition
of a 2-group, so $(\G^{(1)},\circ)$ forms a group. 

Moreover, there are 2-morphisms between the 1-morphisms. They have
both a horizontal and a vertical composition. The horizontal
composition we denote $\circ$, since this is the same direction as the
composition of 1-morphisms. The vertical composition we denote
$\cdot$. The two compositions must be compatible, in the sense that
the following composite is well-defined:
\begin{equation}
  \xymatrix{
    \star & & \star \ar@/^2pc/[ll]^{\gamma_1}="2" \ar[ll]|{\gamma_2}="1" \ar@/_2pc/[ll]_{\gamma_3}="0" & & \star \ar@/^2pc/[ll]^{\gamma'_1}="5" \ar[ll]|{\gamma'_2}="4" \ar@/_2pc/[ll]_{\gamma'_3}="3" \\
    \ar@{=>}"0"+<0ex,-2ex> ;"1"+<0ex,+2ex>^{\chi_2} \\
    \ar@{=>}"1"+<0ex,-2ex> ;"2"+<0ex,+2ex>^{\chi_1} \\
    \ar@{=>}"3"+<0ex,-2ex> ;"4"+<0ex,+2ex>^{\chi'_2} \\
    \ar@{=>}"4"+<0ex,-2ex> ;"5"+<0ex,+2ex>^{\chi'_1}
  }
\end{equation}

This is expressed as the ``interchange law''
\begin{equation}\label{eq:interchange}
  (\chi_1 \cdot \chi_2) \circ (\chi'_1 \cdot \chi'_2) = (\chi_1 \circ \chi'_1) \cdot (\chi_2 \circ \chi'_2)
\end{equation}

Again, these 2-morphisms are assumed to be invertible, so it follows
that $(\G^{(2)},\circ)$ forms a group. Note that we do not say that
$(\G^{(2)},\cdot)$ forms a group, since $\cdot$ may not be defined for
all pairs of 2-morphisms, unless they have compatible source and
target 1-morphisms.

However, one can find a group structure using vertical composition of
2-morphisms, upon choosing a given 1-morphism $\gamma$, and
considering $Hom(\gamma,\gamma)$, the collection of 2-morphisms from
$\gamma$ to itself.  This is a group with operation $\cdot$, since
2-morphisms are invertible under $\cdot$.

(Note that $Hom(\gamma,\gamma)$ does not necessarily close under $\circ$,
unless $\gamma=1_G$. In that case, $Hom(1_G,1_G)$ is an abelian group by the
so-called Eckmann-Hilton argument, which uses the interchange property
(\ref{eq:interchange}) to show that horizontal and vertical
composition agree, and define an abelian group.)

We have noted the composition operations for both types of
morphisms. It is also significant that there is also a straightforward
interaction between the two types, namely whiskering. That is, a
1-morphism can act on a 2-morphism, either on the left or the right,
where it acts by composition with an identity 2-cell. That is, by
convention we say:
\begin{equation}
  \qquad{
    \xymatrix{
      \star & & \star \ar[ll]^{\gamma} & & \star \ar@/^2pc/[ll]^{\gamma_2}="1" \ar@/_2pc/[ll]_{\gamma_1}="0" \\
      \ar@{=>}"0"+<0ex,-2ex> ;"1"+<0ex,+2ex>^{\chi}
    }
  } = \qquad{
    \xymatrix{
      \star & & \star \ar@/^2pc/[ll]^{\gamma \circ \gamma_1}="3"  \ar@/_2pc/[ll]_{\gamma \circ \gamma_2}="2" \\
      \ar@{=>}"2"+<0ex,-2ex> ;"3"+<0ex,+2ex>^{Id_{\gamma} \circ \chi}
    }
  }
\end{equation}
and similarly for whiskering on the right.

Again, all these remarks are only unpacking what is implied by the
definition of a 2-category with one object and invertible 1- and 
2-morphisms. We will want to use this definition occasionally,
particularly when describing 2-group actions. However, for the most
part is will be useful to think of 2-groups in a way which is more
amenable to explicit calculations. The first step toward this is the
definition of a categorical group, and then even more explicit is the
presentation in terms of a crossed module. We recall these next.

\subsubsection{Categorical Groups}

Another natural approach to defining a ``higher'' analog of a group is
based on the view that a group is a ``group object in $\cat{Set}$''. A group object in a category $\cat{C}$ with finite products 
(in particular, $\cat{C}=\cat{Set}$ or $\cat{C}=\cat{Cat}$ with Cartesian products)
is an object $G \in
\cat{C}$, equipped with structure maps such as the multiplication map $m: G \times G
\rightarrow G$ satisfying the same axioms as those for a group. These
axioms can be expressed as commuting diagrams which make sense in any
category with finite products and specialize to the usual axioms in
$(\cat{Set},\times)$. 

\begin{definition}A (strict) \defn{categorical group} is a (strict) group
  object in the monoidal category $(\cat{Cat},\times)$.
\end{definition}

We remark here that there is a more general concept of (weak)
categorical groups, but we will restrict our attention to the strict case
in the present paper. See Baez and Lauda \cite{hda5} for an exposition of the weak case.

That is, categorical groups are (small) categories $\cat{G}$, equipped with
functors $\otimes : \cat{G} \times \cat{G} \rightarrow \cat{G}$, and
$inv : \cat{G} \rightarrow \cat{G}$ satisfying the group axioms.

Since $\cat{Cat}$ is, in fact, a monoidal 2-category (in which the
monoidal product on objects is the cartesian product of categories), our
definition specifies that the functors $\otimes$ and $inv$  satisfy the 
axioms for a group \textit{strictly}. 
That is, the usual equations such as associativity
of multiplication are still equations, rather than 2-isomorphisms. The
corresponding weak notion, in which equations are replaced by such
2-isomorphisms, necessarily has additional coherence conditions they
must satisfy.

Notice that the ``group multiplication'' functor, which we have
written as $\otimes$, satisfies the properties for a monoidal product,
as the notation suggests. Indeed, a (strict) monoidal category is
simply a monoid object in $\cat{Cat}$. A (strict) categorical group is
therefore a strict monoidal category with inverses. That is, every
object and morphism has an inverse with respect to $\otimes$.

It is standard, and easy to see, that any 2-group as defined in
Section \ref{sec:2group-2cat} gives rise to a categorical group, and
vice versa. We describe the correspondence here to settle notation.

\begin{definition}
  Given a 2-group $\G$, the categorical group $(\cat{C}(\G),\otimes)$
  associated to $\G$ is defined as follows:
  \begin{itemize}
  \item \textbf{Objects}: $\cat{C}(\G)^{(0)} = \G^{(1)}$ (objects are
    morphisms of $\G$)
  \item \textbf{Morphisms}: $\cat{C}(\G)^{(1)} = \G^{(2)}$ (morphisms
    are 2-morphisms of $\G$)
    \item The composition of morphisms of $\cat{C}(\G)$ is vertical
      composition of 2-morphisms in $\G$ 
    \item The monoidal product $\otimes$ of $\cat{C}(\G)$ is:
      \begin{itemize}
      \item On $\cat{C}(\G)^{(0)}$, same as composition in $\G^{(1)}$
      \item On $\cat{C}(\G)^{(1)}$, same as horizontal composition in
        $\G^{(2)}$
      \end{itemize}
    \item The inverse functor $inv : \cat{C}(\G) \rightarrow
      \cat{C}(\G)$ is given, for each object or morphism, by the
      inverse or vertical inverse of the corresponding 1- or 2- morphism in $\G$
  \end{itemize}

  Given a categorical group $(\cat{G},\otimes)$, the 2-group $\star
  \wquot \cat{G}$ has:
  \begin{itemize}
    \item \textbf{Object}: just one, $\star$
    \item \textbf{Morphisms}: $(\star \wquot \cat{G})^{(1)} =
      \cat{G}^{(0)}$, the objects of $\cat{G}$
    \item \textbf{2-Morphisms}: $(\star \wquot \cat{G})^{(2)} =
      \cat{G}^{(1)}$, the morphisms of $\cat{G}$
    \item Composition for $(\star \wquot \cat{G})^{(1)}$ and
      horizontal composition for $(\star \wquot \cat{G})^{(2)}$ are
      $\otimes^{(0)}$ and $\otimes^{(1)}$ from $\cat{G}$ respectively
    \item Vertical composition for $(\star \wquot \cat{G})^{(2)}$ is
      composition for $\cat{G}^{(1)}$
  \end{itemize}
\end{definition}

It is well known, and the unfamiliar reader may easily check, that
these two correspondences give an equivalence of the two
definitions. For example, the properties of invertibility for $\star
\wquot \cat{G}$ follow from the existence of $inv$ in
$(\cat{G},\otimes)$, and the fact that it satisfies group axioms. It
is similarly easy to check that monoidal functors between categorical
groups correspond to 2-functors between 2-groups. For example, the
interchange law for 2-groups is expressed, in categorical groups, as
the fact that the monoidal product is functorial. The convention for whiskering in a 2-group is easily translated to
the usual convention for the monoidal product of an object with a
morphism.

Another equivalent presentation is a result of the fact that there is
a correspondence between groups internal to categories, and categories
internal to groups (that is, categories whose sets of objects and sets
of morphisms each have the structure of a group, and where source,
target, and composition operations are all group homomorphisms). This
equivalence follows naturally, since given a categorical group, the
structure maps, such as $\otimes$, are functors which satisfy the
group axioms. The object and morphism maps for these functors
therefore each separately satisfy the same axioms. Thus, the objects
of a categorical group form a group, as do the morphisms of the
categorical group. Thinking of these separate group structures
naturally leads to the third of the definitions we will use in this
paper, namely crossed modules.

\subsubsection{Crossed Modules}

A well-known theorem due to Brown and Spencer \cite{brownspencer2} 
says that (strict) 2-groups
are classified by \defn{crossed modules}. See also \cite{brown-higgins-sivera} for further background.

\begin{definition}
  A crossed module consists of $(G,H,\rhd,\partial)$, where $G$ and
  $H$ are groups, $G \rhd H$ is an action of $G$ on $H$ by automorphisms
  and $\partial : H \ra G$ is a
  homomorphism, satisfying the equations:
  \begin{equation}\label{cm1}
    \partial(g \rhd \eta) = g \partial(\eta) g^{-1}
  \end{equation}
  and
  \begin{equation}\label{cm2}
    \partial(\eta) \rhd \zeta = \eta \zeta \eta^{-1}
  \end{equation}
\end{definition}

We describe the correspondence with categorical groups, which will be
the form we make the most use of, though of course the correspondence
with 2-groups follows immediately as well:

\begin{definition}
  The categorical group $(\cat{G},\otimes)$ given by $(G,H,\rhd,\partial)$ has:
  \begin{itemize}
  \item \defn{Objects}: $\cat{G}^{(0)} = G$
  \item \defn{Morphisms}: $\cat{G}^{(1)} = G \times H$, with source
    and target maps
    \begin{equation}\label{cm3}
      s(g,\eta) = g
    \end{equation}
    and
    \begin{equation}\label{cm4}
      t(g,\eta)  = \partial(\eta)g
    \end{equation}
  \item \defn{Identities}:
    \begin{equation}
      Id_g = (g,1_H)
    \end{equation}
  \item \defn{Composition}:
    \begin{equation}\label{cm5}
      (\partial(\eta)g,\zeta) \circ (g, \eta) = (g, \zeta \eta).
    \end{equation}
  \item \defn{Monoidal product}: given on objects by $g_1\otimes g_2=g_1g_2$ and on morphisms by 
    \begin{equation}\label{cm6}
    (g_1,\eta)\otimes(g_2,\zeta) = (g_1 g_2, \eta (g_1 \rhd \zeta) )
    \end{equation}
  which corresponds to the horizontal composition of 2-morphisms in $\G$.
  \end{itemize} 
  \label{defCG}
\end{definition}

The theorem alluded to above asserts that any strict 2-group is
equivalent to one of this form. We will usually assume that any
2-group we use from now on is presented by a crossed module, and use
the corresponding notation for explicit calculations.

The structure of a category which lies behind the definition of a
crossed module is somewhat disguised. Much of it is included in the
fact that $G$ and $H$ are groups. The explicit action of $G$ on $H$
captures the notion of whiskering, remarked upon above, and the 
projection onto the first factor, together
with the ``boundary'' map $\partial$, determines the source and target
structure maps for the categorical group. However, for example, the
vertical composition of 2-morphisms is not explicitly mentioned. Full
details of how it and other categorical group structures can be
reconstructed from a crossed module are well recorded elsewhere.

\subsection{Double Groupoid of a 2-Group}\label{sec:doublegroupoid}

A useful construction for calculations later on is the double groupoid
of $\G$.

Double categories were introduced by Ehresmann
\cite{ehresmann,quintets}, and can be defined in several equivalent
ways, three of which are noted by Brown and Spencer
\cite{brownspencer}. The most pertinent later in this paper is that
they are categories \textit{internal} to $\cat{Cat}$, the category of
all categories. This terse definition is the most common in current
use, but is somewhat opaque, and obscures the basically symmetric
nature of these structures.

One can also describe a (small) double category in a more manifestly
symmetric way. Although the two definitions are equivalent, they are
superficially rather different. In Section \ref{sec:transdouble} we
will use both points of view to describe the structure which it is the
goal of this paper to construct. Therefore, to avoid confusion, we
will reserve ``double category'' for the symmetric view, and use
``$\cat{Cat}$-category'' to mean the equivalent structure seen as a
category in $\cat{Cat}$.

We will not give a full definition of ``double category'' here,
though, since it is equivalent to and can be deduced from the terse
definition, but it is intuitively useful to see a double category
${\cal D}$ as consisting of:
\begin{itemize}
\item a set of objects $O$
\item two sets $\cal H$ and $\cal V$ of morphisms (we use calligraphic letters here to avoid confusion with the group $H$), denoted \textit{horizontal}
  and \textit{vertical}, together with structure maps making them into
  horizontal and vertical categories with objects $O$
\item a set of squares $S$ which form the morphisms of two category
  structures whose objects are $\cal H$ and $\cal V$ respectively
\end{itemize}
The category structures on the squares $S$ satisfy some extra
properties making them compatible with the category structures on $\cal H$ 
and $\cal V$.  Intuitively, they say that ``horizontal'' structure maps
commute with ``vertical'' structure maps. For example, one can take a
horizontal composite of two squares $\sigma_1$ and $\sigma_2$, and
then take the morphism $s_v(\sigma_1 \circ_h \sigma_2) \in \cal{H}$ which is
its vertical source. This is the same as $s_v(\sigma_1) \circ_h
s_v(\sigma_2)$, the horizontal composite of the vertical sources of
each square. There are many other conditions of this form, such as the
``interchange law'' for composition of squares, which has the same
form as (\ref{eq:interchange}).

A double groupoid is a double category in which all morphisms and
squares are invertible. Further discussion of double groupoids is
found in Brown and Spencer \cite{brownspencer}, while an interesting
summary of general folklore about this symmetric definition and
variations on it can be found online \cite{nlab-doublecat}.

The above intuition essentially justifies a graphical notation which
makes 2-group calculations more straightforward.  This notation has been
extensively used elsewhere, see e.g. \cite{brown-higgins-sivera},
and we now describe it for reference. 

\begin{definition} Given a 2-group $\G$, the double groupoid of $\G$,
  denoted ${\cal D}(\G)$, is the double groupoid with:
\begin{itemize}
\item A unique object of ${\cal D}(\G)$: $O = \G^{(0)} = \{ \star \}$
\item Horizontal and vertical morphisms of ${\cal D}(\G)$ are $\cal{H} = \cal{V} =
  \G^{(1)}$
\item Squares of ${\cal D}(\G)$ are given by all squares of
  the form
\begin{equation*}
\xybiglabels \vcenter{\xymatrix@M=0pt@=3pc{\ar@{-} [d] _{g_4} \ar@{-} [r]^{g_3} \ar@{} [dr]|\eta & \ar@{-} [d]^{g_2} \\
\ar@{-} [r]_{g_1}  & }}
\end{equation*}
where $g_i\in G, \, i=1, \ldots, 4$, $\eta\in H$ and $\partial(\eta)= g_1g_2g_3^{-1}g_4^{-1}$
\item horizontal and vertical composition of squares is given by
\begin{equation*}
\xybiglabels \vcenter{\xymatrix @=3pc @W=0pc @M=0pc { \ar@{-}[r] ^{g_3} \ar@{-}[d]
_{g_4} \ar@{}[dr]|{\eta_1} & \ar@{-}[r] ^{g_7} \ar@{-}[d]|{g_2}
\ar@{}[dr]|{\eta_2} &  \ar@{-}[d]^{g_6} 
\\ \ar@{-}[r] _{g_1} & \ar@{-}[r] _{g_5} & 
}}
 \,  = \,
\xybiglabels \vcenter{\xymatrix@M=0pt@=3pc@C=3pc{\ar@{-} [d] _{g_4} \ar@{-} [rrr]^-{g_3g_7} & \ar@{} [dr]|-{\eta_1 (g_4g_3g_2^{-1})\rhd \eta_2}& &\ar@{-} [d]^{g_6} \\
\ar@{-} [rrr]_-{g_1g_5}  & &&}}
 \,  = \,
\xybiglabels \vcenter{\xymatrix@M=0pt@=3pc@C=2pc{\ar@{-} [d] _{g_4} \ar@{-} [rrr]^-{g_3g_7} & \ar@{} [dr]|-{(g_1\rhd \eta_2)\eta_1 }& &\ar@{-} [d]^{g_6} \\
\ar@{-} [rrr]_-{g_1g_5}  & &&}}
\end{equation*}
where the two expressions for the $H$ element in the horizontal compositions are the same, using $\partial(\eta_1^{-1})\rhd \eta_2=\eta_1^{-1} \eta_2\eta_1$, and
\begin{equation*}
\xybiglabels \vcenter{\xymatrix@M=0pt@=3pc{\ar@{-} [d] _{g_4} \ar@{-} [r]^{g_3} \ar@{} [dr]|{\eta_1} & \ar@{-} [d]^{g_2} \\
\ar@{-} [r] |{g_1} \ar@{-} [d]_{g_5} \ar@{}[dr] |{\eta_2} & \ar@{-} [d]^{g_7} \\
\ar@{-} [r]_{g_6}& }}  \,  = \,
\xybiglabels \vcenter{\xymatrix@M=0pt@=3pc@C=2pc{\ar@{-} [d] _{g_5g_4} \ar@{-} [rrr]^-{g_3} & \ar@{} [dr]|-{\eta_2(g_5\rhd \eta_1)}& &\ar@{-} [d]^{g_7g_2} \\
\ar@{-} [rrr]_-{g_6}  & &&}}
\end{equation*}
and these operations satisfy the interchange law, i.e. the equality of
evaluating a 2 by 2 array of squares in two different ways (first
horizontal and then vertical composition, or vice-versa).
\end{itemize}
\end{definition}

In Ehresmann's terminology, this is the double category of
\textit{quintets} of the bicategory $\G$: the term refers to the fact
that squares are determined by five pieces of data: the four
1-morphisms which are its edges, and the 2-morphism which fills the
square.

\begin{remark}
  Due to the interchange law there is a consistent evaluation of any
  rectangular array of squares, which is independent of the order in
  which the horizontal and vertical multiplications are
  performed. Also squares in ${\cal D}(\G)$ have horizontal and
  vertical inverses, which are respectively given by (for the square
  in the definition):
\begin{equation*}
\xybiglabels \vcenter{\xymatrix@M=0pt@=3pc@C=1pc{\ar@{-} [d] _{g_2} \ar@{-} [rrr]^-{g_3^{-1}} & \ar@{} [dr]|-{\eta^{-h}}& &\ar@{-} [d]^{g_4} \\
\ar@{-} [rrr]_-{g_1^{-1}}  & &&}} \, = \, 
\xybiglabels \vcenter{\xymatrix@M=0pt@=3pc@C=2pc{\ar@{-} [d] _{g_2} \ar@{-} [rrr]^-{g_3^{-1}} & \ar@{} [dr]|-{g_1^{-1}\rhd \eta^{-1}}& &\ar@{-} [d]^{g_4} \\
\ar@{-} [rrr]_-{g_1^{-1}}  & &&}}
\qquad
\xybiglabels \vcenter{\xymatrix@M=0pt@=3pc@C=1pc{\ar@{-} [d] _{g_4^{-1}} \ar@{-} [rrr]^-{g_1} & \ar@{} [dr]|-{\eta^{-v}}& &\ar@{-} [d]^{g_2^{-1}} \\
\ar@{-} [rrr]_-{g_3}  & &&}}  \, = \,
\xybiglabels \vcenter{\xymatrix@M=0pt@=3pc@C=2pc{\ar@{-} [d] _{g_4^{-1}} \ar@{-} [rrr]^-{g_1} & \ar@{} [dr]|-{g_4^{-1}\rhd \eta^{-1}}& &\ar@{-} [d]^{g_2^{-1}} \\
\ar@{-} [rrr]_-{g_3}  & &&}}
\end{equation*}

\end{remark}

In particular, the double groupoid ${\cal D}(\G)$ contains a copy of
$\G$ by considering only the horizontal morphisms, and the squares for
which the vertical source and target are identities. Thus, a typical
square of this sort is of the following form:
\begin{equation*}
\xybiglabels \vcenter{\xymatrix@M=0pt@=3pc{\ar@{-} [d] \ar@{-} [r]^{g_1} \ar@{} [dr]|\eta & \ar@{-} [d] \\
\ar@{-} [r]_{g_2}  & }}
\end{equation*}
where here, and henceforth, any unlabelled edge or square is taken to be labelled with the identity of the corresponding group. 
This can be identified with a 2-morphism in the 2-group $\G$, namely:
\begin{equation}
  \xymatrix{
    \star & & \star  \ar@/^2pc/[ll]^{g_2}="1" \ar@/_2pc/[ll]_{g_1}="0" \\
    \ar@{=>}"0"+<0ex,-2ex> ;"1"+<0ex,+2ex>^{\eta} \\
  }
\end{equation}
or equivalently, with the morphism $(g_1,\eta)$ in the categorical group.

\section{Actions of 2-Groups on Categories}\label{sec:2group-actions}

Our aim is to describe actions of a 2-group $\G$ on a category
$\C$. There are two equivalent ways of describing these, depending on
whether one views $\G$ as a 2-group or a categorical group. In this
section, we will outline these two views and see the relation between
them. Then we will consider a natural example, namely the adjoint
action of a categorical group on its own underlying category.

There are two different, but equivalent, definitions of group actions
on a set, and both will be relevant for us.  To begin with, consider a
group $G$, seen as a one-object category whose morphisms are all
invertible.  Then a $G$ action $\phi$ on a set $X$ may be described as
a functor
\begin{equation}
\phi : G \ra \cat{Set}
\label{eq:G-action-functor}
\end{equation}
where $X = \phi(\star)$ is the image of the unique object of $G$,
i.e. the image of $\phi$ is $End(X)$, the full subcategory of
$\cat{Set}$ with the single object $X$ (in fact, since $G$ is a
group, the image is $Aut(X)$, consisting of only the invertible
endomorphisms of $X$).

Next we show how to construct the transformation groupoid in a way
that naturally generalizes to the 2-group case, by regarding the group
$G$ as a set equipped with a multiplication map $m : G \times G \ra G$
and an inverse $inv : G \ra G$, satisfying the group axioms. Thus we
have the familiar definition of an action on a set $X$ as a function
\begin{equation}
  \hat{\phi} : G \times X \ra X
\label{eq:G-action-function}
\end{equation}
This function is related to $\phi$ by
$\hat{\phi}(\gamma,x)=\phi_\gamma(x)$.  Functoriality of $\phi$ means,
in particular,
that $\hat{\phi}$ satisfies a compatibility condition with the
multiplication map $m : G \times G \ra G$, namely that the
following commutes:
\begin{equation}\label{eq:actioncondition-diag}
  \xymatrix@C=+5pc{
    G \times G \times X \ar[r]^{m \times Id_X} \ar[d]_{Id_G \times \hat{\phi}} & G \times X \ar[d]^{\hat{\phi}} \\
    G \times X \ar[r]_{\hat{\phi}} & X
  }
\end{equation}
$\hat{\phi}$ also satisfies a unit condition:
\begin{equation}
  \hat{\phi}(1,x) =x, \, \forall x\in X
\label{eq:phi-unit-condition}
\end{equation}

This definition is, of course, equivalent to the point of view of an
action as a functor. First it determines $\phi(\star) = X$. The two
definitions are then related by turning a function $ G \ra Hom(X,X)$
into an $X$-valued function of $G \times X$, taking $(\gamma,x)$ to
$\phi_\gamma(x)$. (The term for this in logic is ``uncurrying'', while
``currying'' denotes the process which turns a function of $n$
variables into a chain of $n$ one-variable functions, each one
returning the next function in the chain).

\subsection{Actions of 2-Groups on Categories}

Next we extend the two viewpoints of the previous introduction for the
action of a group on a set to the action of a 2-group on a
category. First, by analogy with (\ref{eq:G-action-functor}),
regarding a 2-group as a 2-category, an action will be a 2-functor
into the 2-category $\cat{Cat}$. 

\begin{definition}\label{def:2grp_action}
  A 2-group $\G$ acts (strictly) on a category $\C$ if there is a
  (strict) 2-functor:
\begin{equation}
  \Phi : \G \ra \cat{Cat}
\end{equation}
whose image lies in $End(\C)$, the full sub-2-category of $\cat{Cat}$
with the single object $\C$.
\end{definition}

Thus $\Phi(\ast)=\C$, and on 1- and 2-morphisms $\Phi$ is given by assignments:
\begin{itemize}
    \item for each $\gamma \in \G^{(1)}$ we have the endofunctor 
	$\Phi_\gamma: \C\rightarrow \C$, acting as 
	$$(x\stackrel{f}{\rightarrow}y) \mapsto 
	(\Phi_\gamma(x)\stackrel{\Phi_\gamma(f)}{\longrightarrow}\Phi_\gamma(y))$$
      \item for each 2-morphism $(\gamma_1, \chi) \in \G^{(2)}$, we
        have the natural transformation $\Phi_{(\gamma_1,\chi)}:
        \Phi_{\gamma_1}\rightarrow \Phi_{\gamma_2}$, where
        $\gamma_2= \partial(\chi)\gamma_1$, given by assignments
        $(\C^{(0)} \ni x) \mapsto (\Phi_{(\gamma_1,\chi)}(x) \in
        \C^{(1)})$ satisfying the naturality condition
\begin{equation}
\vcenter{\xymatrix@=3.5pc{
    \Phi_{\gamma_1}(x)   \ar[r]^{\Phi_{\gamma_1}(f)} \ar [d]_{\Phi_{(\gamma_1,\chi)}(x)}  \ar@{} [dr]|{} & \Phi_{\gamma_1}(y)  \ar[d]^{\Phi_{(\gamma_1,\chi)}(y)}\\
    \Phi_{\gamma_2}(x) \ar[r]_{\Phi_{\gamma_2}(f)} &   \Phi_{\gamma_2}(y)}} 
\label{eq:Phinatural}
\end{equation} 
\end{itemize}
These assignments must  satisfy the conditions to be a strict 2-functor, i.e. they preserve all vertical and horizontal compositions and identities, which here means:

\begin{itemize}
    \item[Ver] 
	\begin{itemize}
    		\item[1)] $\Phi_{(\gamma_2,\chi_2)}(x) \circ_v \Phi_{(\gamma_1,\chi_1)}(x) = \Phi_{(\gamma_1,\chi_2\chi_1)}(x)$ where $\gamma_2= \partial(\chi_1)\gamma_1$
    		\item[2)] $\Phi_{(\gamma, 1)}(x) = {\rm id}_{\Phi_\gamma(x)}$
	\end{itemize}
    \item[Hor] 
	\begin{itemize}
    		\item[1)] $\Phi_{\gamma_1}\circ \Phi_{\gamma_3} = \Phi_{\gamma_1\gamma_3}, \,  \Phi_1 = {\rm id}_{\C}  $
    		\item[2)] $\Phi_{(\gamma_1,\chi_1)} \circ_h \Phi_{(\gamma_3,\chi_2)} = \Phi_{(\gamma_1\gamma_3,\chi_1(\gamma_1 \rhd\chi_2))}$. 

				Writing out explicitly the horizontal composition of natural transformations on the l.h.s., this final condition becomes: 
				$$
				\Phi_{(\gamma_1,\chi_1)}(\Phi_{\gamma_4}(x)) \circ \Phi_{\gamma_1}(\Phi_{(\gamma_3,\chi_2)} (x) )=
				\Phi_{(\gamma_1\gamma_3,\chi_1(\gamma_1 \rhd\chi_2))}(x)
				$$
				where $\gamma_4= \partial(\chi_2)\gamma_3$. Here the underlying array of squares in ${\cal D}(\G)$ is 
				\begin{equation}
				\xybiglabels \vcenter{\xymatrix @=3pc @W=0pc @M=0pc { \ar@{-}[r] ^{\gamma_1} \ar@{-}[d]_{} \ar@{}[dr]|{\chi_1} & \ar@{-}[r] ^{\gamma_3} \ar@{-}[d]|{}
				\ar@{}[dr]|{\chi_2} & \ar@{-}[d]^{}
				\\ \ar@{-}[r] _{\gamma_2} & \ar@{-}[r] _{\gamma_4} & 
				}}
				\label{eq:F2squares}
				\end{equation}

	\end{itemize}
\end{itemize}

The second viewpoint for a 2-group to act on a category is to regard the 
2-group $\G$ as a categorical group, i.e. a monoidal category, and proceed by analogy with
(\ref{eq:G-action-function}), (\ref{eq:actioncondition-diag}).

\begin{definition}
  A strict action of a categorical group $\G$ on a category $\C$ is a
  functor $\hat{\Phi}:\G\times \C \rightarrow \C$ 
  satisfying the action square diagram 
  in $\cat{Cat}$ (strictly):
\begin{equation}\label{eq:catactioncondition-diag}
  \xymatrix@C=+5pc{
    \G \times \G \times \C \ar[r]^{\otimes \times Id_{\C}} \ar[d]_{Id_{\G} \times \hat{\Phi}} & \G \times \C \ar[d]^{\hat{\Phi}} \\
    \G \times \C \ar[r]_{\hat{\Phi}} & \C
  }
\end{equation}
and the unit condition 
\begin{equation}\label{eq:Phi-unit-condition}
 \hat{\Phi}(1,x) =x, \, \forall x\in \C^{(0)} 
\end{equation}
\label{def:2grp_action-Phi-hat}
\end{definition}

Thus, from functoriality, we have the conditions:
\begin{eqnarray}
\hat{\Phi}((\gamma_2,\chi_2), g) \circ  \hat{\Phi}((\gamma_1,\chi_1), f) 
& = & \hat{\Phi}((\gamma_1,\chi_2\chi_1), g\circ f)
\label{eq:Phihat-func1}\\
\hat{\Phi}((\gamma,1_H), {\rm id}_x) & = & {\rm id}_{\hat{\Phi}(\gamma,x)}
\label{eq:Phihat-func2}
\end{eqnarray}
where, in (\ref{eq:Phihat-func1}),  $\gamma_2= \partial(\chi_1)\gamma_1$. 
The action square diagram corresponds to the equations (on objects and morphisms
respectively):
\begin{equation}
\hat{\Phi}(\gamma_1\gamma_3, x) = \hat{\Phi}(\gamma_1, \hat{\Phi}(\gamma_3,x))
\label{eq:Phihat-ASobjects}
\end{equation}
\begin{equation}
\hat{\Phi}((\gamma_1\gamma_3,\chi_1(\gamma_1\rhd \chi_2) ), f) =  \hat{\Phi}((\gamma_1,\chi_1), \hat{\Phi}((\gamma_3,\chi_2), f)).
\label{eq:Phihat-ASmorphisms}
\end{equation}
Here the underlying array of squares in ${\cal D}(\G)$ is (\ref{eq:F2squares}).

The following theorem shows that these two viewpoints are equivalent.
\begin{theorem}\label{thm:phifunctorial}
  A strict 2-functor $\Phi : \G \ra End(\C)$ is equivalent to a strict
  action functor $\hat{\Phi}:\G\times \C \rightarrow \C$.
\end{theorem}
\begin{proof}
Given $\Phi$, set:
$ \hat{\Phi}(\gamma,x) := {\Phi}_\gamma(x)$ and 
\begin{equation}
  (\xybiglabels{\xymatrix{\hat{\Phi}(\gamma_1,x) \ar[rr]^-{\hat{\Phi}((\gamma_1,\chi), f)} && \hat{\Phi}(\gamma_2,y)   }})
  :=
  (\xybiglabels{\xymatrix{{\Phi}_{\gamma_1}(x) \ar[rrr]^-{\Phi_{(\gamma_1,\chi)}(y)\circ \Phi_{\gamma_1}(f)} &&& {\Phi}_{\gamma_2}(y)   }})
\label{eq:PhihatfromPhi}
\end{equation}
By the naturality condition (\ref{eq:Phinatural}), this is also equal
to
\begin{equation}
(\xybiglabels{\xymatrix{{\Phi}_{\gamma_1}(x) \ar[rrr]^-{\Phi_{\gamma_2}(f)  \circ \Phi_{(\gamma_1,\chi)}(x)} &&& {\Phi}_{\gamma_2}(y)   }})
\end{equation}
Now functoriality for $\hat{\Phi}$ follows from putting together four
naturality squares (\ref{eq:Phinatural}) in the obvious way, and then
using functoriality of $\Phi_\gamma$ horizontally and the first Ver
condition vertically.

The first action square equation (\ref{eq:Phihat-ASobjects})  and the unit condition (\ref{def:2grp_action-Phi-hat}) are immediate. The second action square equation (\ref{eq:Phihat-ASmorphisms}) follows from:
\begin{eqnarray*}
\hat{\Phi}((\gamma_1\gamma_3,\chi_1(\gamma_1\rhd \chi_2) ), f)  & = & \Phi_{(\gamma_1\gamma_3,\chi_1(\gamma_1\rhd \chi_2))}(y)\circ \Phi_{\gamma_1\gamma_3}(f) \\
& = & \Phi_{(\gamma_1,\chi_1)}(\Phi_{\gamma_4}(y)) \circ (\Phi_{\gamma_1}(\Phi_{(\gamma_3,\chi_2)}(y)) \circ \Phi_{\gamma_1}(\Phi_{\gamma_3}(f)) ) \\
& = & \Phi_{(\gamma_1,\chi_1)}(\Phi_{\gamma_4}(y)) \circ (\Phi_{\gamma_1}(\Phi_{(\gamma_3,\chi_2)}(y)) \circ \Phi_{\gamma_3}(f)) ) \\
& = & \hat{\Phi}((\gamma_1, \chi_1),  \Phi_{(\gamma_3,\chi_2)}(y) \circ \Phi_{\gamma_3}(f)   ) \\
& = & \hat{\Phi}((\gamma_1,\chi_1), \hat{\Phi}((\gamma_3,\chi_2), f))
\end{eqnarray*}
using the first Hor condition for $\Phi$ and associativity in the second equality.

Conversely, given $\hat{\Phi}$, set 
${\Phi}_\gamma(x)  := \hat{\Phi}(\gamma,x) $ (on objects), 
$$
(\xybiglabels{\xymatrix{{\Phi}_{\gamma}(x) \ar[r]^-{ \Phi_{\gamma}(f)} & 
{\Phi}_{\gamma}(y)  }})
:=
(\xybiglabels{\xymatrix{\hat{\Phi}(\gamma,x) \ar[rr]^-{\hat{\Phi}((\gamma,1_H), f)} && 
{\Phi}(\gamma,y)  }})
$$ 
(on morphisms), and on 2-morphisms the natural transformation $\Phi_{(\gamma_1, \chi)}: \Phi_{\gamma_1}\rightarrow \Phi_{\gamma_2}$, where $\gamma_2= \partial(\chi)\gamma_1$, is given by:
\begin{equation}
  (\xybiglabels{\xymatrix{{\Phi}_{\gamma_1}(x) \ar[rr]^-{\Phi_{(\gamma_1,\chi)}(x)} && 
      {\Phi}_{\gamma_2}(x) }})
  :=
  (\xybiglabels{\xymatrix{\hat{\Phi}(\gamma_1,x) \ar[rr]^-{\hat{\Phi}((\gamma_1,\chi), {\rm id}_x)} &&  \hat{\Phi}(\gamma_2,x) }})
\end{equation}
Now, the Hor properties for $\Phi$ follow in a straightforward manner using the functoriality conditions (\ref{eq:Phihat-func1}), (\ref{eq:Phihat-func2}) for $\hat{\Phi}$.
The first Ver property for $\Phi$ is immediate, and the second Ver property follows from:
\begin{eqnarray*}
\Phi_{(\gamma_1,\chi_1)}(\Phi_{\gamma_4}(x)) \circ \Phi_{\gamma_1}(\Phi_{(\gamma_3,\chi_2)} (x) ) 
& = & \hat{\Phi}((\gamma_1,\chi_1), {\rm id}_{\hat{\Phi}(\gamma_4,x)}) \circ \hat{\Phi}((\gamma_1,1_H) , \hat{\Phi}((\gamma_3,\chi_2), {\rm id}_x)) \\
& = & \hat{\Phi}((\gamma_1,\chi_1),  \hat{\Phi}((\gamma_3,\chi_2), {\rm id}_x)) \\
& = & \hat{\Phi}((\gamma_1\gamma_3,\chi_1(\gamma_1 \rhd\chi_2)), {\rm id}_x)) \\
 & = & \Phi_{(\gamma_1\gamma_3,\chi_1(\gamma_1 \rhd\chi_2))}(x)
\end{eqnarray*}
where we use functoriality of $\hat{\Phi}$ (\ref{eq:Phihat-func1}) in the second equality and the second action square condition
 (\ref{eq:Phihat-ASmorphisms}) applied to  $f={\rm id}_x$ in the penultimate equality.
\end{proof}

It is convenient to introduce a succinct notation for a 2-group action
analogous to the usual notation $g \rhd x = \phi_g(x)$ for a group
action. There are actually three possible maps.

Since $\Phi_{\gamma}$ is a functor with both object and morphism maps,
the objects of $\G$ act on both the set of objects and the set of
morphisms of $\C$. Moreover, the action functor $\hat{\Phi}$ also has
both object and morphism maps. The object map 
$\hat{\Phi}(\gamma,-)$ is the same as the object map for
$\Phi_{\gamma}$, by the above argument. However, the morphism map
determines an action of the morphisms of $\G$ on the morphisms of
$\C$, which is a different action again.

We will return to the relation between these three actions again in
Corollary \ref{cor:transgpds-3actions}. For the moment, is convenient
to use the symbol $\act$ to denote all three, as follows.

\begin{definition}\label{def:act-notation}
  If $\G$ is a 2-group classified by the crossed module
  $(G,H,\rhd,\partial)$, let the notation $\act$ denote the
  following.
  \begin{itemize}
  \item Given $\gamma \in \G^{(0)} = G$ and $x \in \C^{(0)}$, let
    \begin{equation}
      \gamma \act x = \Phi_\gamma(x) = \hat{\Phi}(\gamma, x) 
    \end{equation}
  \item Given $\gamma \in \G^{(0)} = G$ and $f \in \C^{(1)}$, let
    \begin{equation}
      \gamma \act f = \Phi_\gamma(f) = \hat{\Phi}((\gamma,1_H), f) 
    \end{equation}
  \item Given $(\gamma,\chi) \in \G^{(1)} = G \ltimes H$ and $(f: x
    \ra y) \in \C^{(1)}$, let
    \begin{equation}
      \begin{array}{ccl}
        (\gamma, \chi)\act f & = & \hat{\Phi} ((\gamma, \chi), f) \\
        & = & \Phi_{(\gamma,\chi)}(y) \circ (\gamma \act f )\\
        & = & (\partial (\chi)\gamma \act f) \circ \Phi_{(\gamma,\chi)}(x)
      \end{array}
    \end{equation}
  \end{itemize}
\end{definition}

The last two expressions are the same as those given in the proof of
Theorem \ref{thm:phifunctorial}, written in our new notation. It will
be revisited in (\ref{eq:square-transcomp-target2}).

\subsection{Example: Adjoint Action of 2-Groups} \label{adjoint}

Here we want to describe the adjoint action of a 2-group $\G$ on
itself.  This is the analog of the usual adjoint action of a group $G$
on itself by conjugation. The adjoint action is a functor
\begin{equation}
  \Phi : G \ra End(G)
\end{equation}
given by the property that
\begin{equation}
  \Phi_{\gamma}(g) = \gamma g \gamma^{-1}
\end{equation}

We want a 2-functor given ``by conjugation'', insofar as this makes
sense.  In fact, as we shall see, this is easy to do in the language
of crossed modules, since the axioms (\ref{cm1}) and (\ref{cm2}) for a
crossed module $(G,H,\rhd,\partial)$ imply that the action $\rhd$ of $G$ on 
$H$ resembles conjugation as much as possible. This will be made even clearer by using 
the square calculus in the double groupoid of $\G$, ${\cal D}(\G)$.

In accordance with Definition \ref{def:2grp_action}, we take the
action of $\G$ on itself to be given by a 2-functor from $\G$ to
$\cat{Cat}$ with image in $End(\G)$, i.e. the ``acting'' $\G$ is
regarded as a 2-category, whilst the ``acted on'' $\G$ is regarded as
a (monoidal) category. (This is analogous to the situation for the
adjoint action of a group $G$, regarded as a category, acting on $G$,
regarded as a set).

\begin{definition}
  Suppose $\G$ is the 2-group given by a crossed module
  $(G,H,\rhd,\partial)$.  Then we define a strict 2-functor:
  \begin{equation}
    \Phi : \G \ra \cat{Cat} 
  \end{equation}
  with image in $End(\G)$
  in the following way.  At the object level, $\Phi(\ast)=\G$, where $\G$ on the right is taken to be 
a category. For each morphism $\gamma \in \G^{(1)}$
  \begin{equation}
    \Phi_{\gamma} : \G \rightarrow \G
  \end{equation}
  is a morphism of $End(\G)$, namely an endofunctor of $\G$.  Its object
  map is given by:
  \begin{equation}\label{eq:adjoint-functor-ob}
    \Phi_{\gamma}(g) = \gamma g \gamma^{-1}
  \end{equation}
  and its morphism map is given by
  \begin{equation}\label{eq:adjoint-functor-mor}
    \Phi_{\gamma}(g,\eta) = (\gamma g \gamma^{-1}, \gamma \rhd \eta)
  \end{equation}
  For each 2-morphism $(\gamma, \chi) \in \G^{(2)}$ there is a natural
  transformation from $\Phi_\gamma$ to $\Phi_{\partial(\chi)\gamma}$, given  by:
  \begin{equation}
    \Phi_{(\gamma,\chi)}(g) = (\gamma g \gamma^{-1} , \chi ( \gamma g \gamma^{-1} )\rhd \chi^{-1} ) )
\label{eq:adjoint-nat-trans}
  \end{equation}
  \label{def:adjoint}
\end{definition}

In the action notation of Definition \ref{def:act-notation}, the first
two simply say that $\gamma \act g = \gamma g \gamma^{-1}$ and $\gamma
\act (g,\eta) = (\gamma \act g , \gamma \rhd \eta)$. The third
part of that definition will define $(\gamma,\chi) \act (g,\eta)$,
which is not directly given by $\Phi$. However, we can understand it
using the calculus of squares introduced in Section
\ref{sec:doublegroupoid} for the double groupoid of quintets ${\cal
  D}(\G)$ associated to $\G$.

Represent morphisms $(g_1,\eta)$ in the category $\G$ as squares, so
that the action of $\Phi_\gamma$ on such a square is given by:
\begin{equation}
\xybiglabels \vcenter{\xymatrix@M=0pt@=3pc{\ar@{-} [d] \ar@{-} [r]^{g_1} \ar@{} [dr]|\eta & \ar@{-} [d] \\
\ar@{-} [r]_{g_2}  & }}
\quad 
\stackrel{\Phi_\gamma}{\mapsto}
\quad
\xybiglabels \vcenter{\xymatrix @=3pc @W=0pc @M=0pc { \ar@{-}[r] ^{\gamma} \ar@{-}[d]_{} \ar@{}[dr]|{} & \ar@{-}[r] ^{g_1} \ar@{-}[d]|{}
\ar@{}[dr]|{\eta} & \ar@{-}[r] ^{\gamma^{-1}} \ar@{-}[d]^{}  \ar@{}[dr]|{} & \ar@{-}[d]^{}
\\ \ar@{-}[r] _{\gamma} & \ar@{-}[r] _{g_2} & \ar@{-}[r] _{\gamma^{-1}} &
}} \, = \, 
\xybiglabels \vcenter{\xymatrix@M=0pt@=3pc@C=4pc{\ar@{-} [d] \ar@{-} [r]^{\gamma g_1\gamma^{-1}} \ar@{} [dr]|{\gamma\rhd\eta} & \ar@{-} [d] \\
\ar@{-} [r]_{\gamma g_2\gamma^{-1}}  & }} \, = \, 
\xybiglabels \vcenter{\xymatrix@M=0pt@=3pc@C=4pc{\ar@{-} [d] \ar@{-} [r]^{\gamma \act g_1} \ar@{} [dr]|{\gamma\rhd\eta} & \ar@{-} [d] \\
\ar@{-} [r]_{\gamma \act g_2}  & }}
\label{eq:Phi-1-square}
\end{equation}
where $g_2=\partial(\eta)g_1$. Likewise we can represent the morphism $\Phi_{(\gamma_1,\chi)}(g)$ (which has no direct analog in the action notation) as
\begin{equation}
\Phi_{(\gamma_1,\chi)}(g)
\, = \,
\xybiglabels \vcenter{\xymatrix @=3pc @W=0pc @M=0pc { \ar@{-}[r] ^{\gamma_1} \ar@{-}[d]_{} \ar@{}[dr]|{\chi} & \ar@{-}[r] ^{g} \ar@{-}[d]|{}
\ar@{}[dr]|{} & \ar@{-}[r] ^{\gamma_1^{-1}} \ar@{-}[d]^{}  \ar@{}[dr]|{\chi^{-h}} & \ar@{-}[d]^{}
\\ \ar@{-}[r] _{\gamma_2} & \ar@{-}[r] _{g} & \ar@{-}[r] _{\gamma_2^{-1}} &
}}
\, = \,
\xybiglabels \vcenter{\xymatrix@M=0pt@=3pc@C=8pc{\ar@{-} [d] \ar@{-} [r]^{\gamma_1 \act g} \ar@{} [dr]|{\chi(\gamma_1 \act g )\rhd\chi^{-1}} & \ar@{-} [d] \\
\ar@{-} [r]_{\gamma_2 \act g}  & }}
\label{eq:Phi-2-square}
\end{equation}
where $\gamma_2=\partial(\chi)\gamma_1$. 

It follows from (\ref{eq:Phi-1-square}) that $\Phi_\gamma(g_1,\eta)$ has the correct source and target. $\Phi_\gamma$ is a functor, since it preserves identities, $\Phi_\gamma(g,1_H)=(\gamma g\gamma^{-1},1_H)$ (immediate), and composition, i.e. 
$\Phi_\gamma(g_2,\eta_2) \circ \Phi_\gamma(g_1,\eta_1)= \Phi_\gamma(g_1,\eta_2\eta_1)$, where $g_2=\partial(\eta_1)g_1$,  because of:
$$
\xybiglabels \vcenter{\xymatrix @=3pc @W=0pc @M=0pc { \ar@{-}[r] ^{\gamma} \ar@{-}[d]_{} \ar@{}[dr]|{} & \ar@{-}[r] ^{g_1} \ar@{-}[d]|{}
\ar@{}[dr]|{\eta_1} & \ar@{-}[r] ^{\gamma^{-1}} \ar@{-}[d]^{}  \ar@{}[dr]|{} & \ar@{-}[d]^{}
\\ \ar@{-}[r] |{\gamma} \ar@{-}[d]_{} \ar@{}[dr]|{} & \ar@{-}[r] |{g_2} \ar@{-}[d]|{} \ar@{}[dr]|{\eta_2}  & \ar@{-}[r] |{\gamma^{-1}} \ar@{-}[d]^{}  \ar@{}[dr]|{} & \ar@{-}[d]^{}
\\ \ar@{-}[r] _{\gamma} & \ar@{-}[r] _{g_3} & \ar@{-}[r] _{\gamma^{-1}} &
}}
\, = \,
\xybiglabels \vcenter{\xymatrix @=3pc @W=0pc @M=0pc { \ar@{-}[r] ^{\gamma} \ar@{-}[d]_{} \ar@{}[dr]|{} & \ar@{-}[r] ^{g_1} \ar@{-}[d]|{}
\ar@{}[dr]|{\eta_2\eta_1} & \ar@{-}[r] ^{\gamma^{-1}} \ar@{-}[d]^{}  \ar@{}[dr]|{} & \ar@{-}[d]^{}
\\ \ar@{-}[r] _{\gamma} & \ar@{-}[r] _{g_3} & \ar@{-}[r] _{\gamma^{-1}} &
}}
$$

It then follows immediately from (\ref{eq:Phi-2-square}) that $\Phi_{(\gamma_1,\chi)}(g)$ has the correct source and target. The naturality condition (\ref{eq:Phinatural}), i.e. 
$\Phi_{\gamma_2}(g_1,\eta) \circ   \Phi_{(\gamma_1,\chi)}(g_1)  =  \Phi_{(\gamma_1,\chi)}(g_2)   \circ \Phi_{\gamma_1}(g_1,\eta) $, is the equality:
$$
\xybiglabels \vcenter{\xymatrix @=3pc @W=0pc @M=0pc { \ar@{-}[r] ^{\gamma_1} \ar@{-}[d]_{} \ar@{}[dr]|{\chi} & \ar@{-}[r] ^{g_1} \ar@{-}[d]|{} \ar@{}[dr]|{} & 
\ar@{-}[r] ^{\gamma_1^{-1}} \ar@{-}[d]^{}  \ar@{}[dr]|{\chi^{-h}} & \ar@{-}[d]^{}
\\ \ar@{-}[r] |{\gamma_2} \ar@{-}[d]_{} \ar@{}[dr]|{} & \ar@{-}[r] |{g_1} \ar@{-}[d]|{} \ar@{}[dr]|{\eta}  & \ar@{-}[r] |{\gamma_2^{-1}} \ar@{-}[d]^{}  \ar@{}[dr]|{} & \ar@{-}[d]^{}
\\ \ar@{-}[r] _{\gamma_2} & \ar@{-}[r] _{g_2} & \ar@{-}[r] _{\gamma_2^{-1}} &
}}
\, = \,
\xybiglabels \vcenter{\xymatrix @=3pc @W=0pc @M=0pc { \ar@{-}[r] ^{\gamma_1} \ar@{-}[d]_{} \ar@{}[dr]|{} & \ar@{-}[r] ^{g_1} \ar@{-}[d]|{} \ar@{}[dr]|{\eta} & 
\ar@{-}[r] ^{\gamma_1^{-1}} \ar@{-}[d]^{}  \ar@{}[dr]|{} & \ar@{-}[d]^{}
\\ \ar@{-}[r] |{\gamma_1} \ar@{-}[d]_{} \ar@{}[dr]|{\chi} & \ar@{-}[r] |{g_2} \ar@{-}[d]|{} \ar@{}[dr]|{}  & \ar@{-}[r] |{\gamma_1^{-1}} \ar@{-}[d]^{}  \ar@{}[dr]|{\chi^{-h}} & \ar@{-}[d]^{}
\\ \ar@{-}[r] _{\gamma_2} & \ar@{-}[r] _{g_2} & \ar@{-}[r] _{\gamma_2^{-1}} &
}}
$$
which follows from the interchange law by evaluating the vertical compositions on both sides first. 

The first Ver condition, i.e. $\Phi_{(\gamma_2,\chi_2)}(g) \circ \Phi_{(\gamma_1,\chi_1)}(g) = \Phi_{(\gamma_1,\chi_2\chi_1)}(g)$ where $\gamma_2= \partial(\chi_1)\gamma_1$, follows from the equality:
$$
\xybiglabels \vcenter{\xymatrix @=3pc @W=0pc @M=0pc { \ar@{-}[r] ^{\gamma_1} \ar@{-}[d]_{} \ar@{}[dr]|{\chi_1} & \ar@{-}[r] ^{g} \ar@{-}[d]|{}
\ar@{}[dr]|{} & \ar@{-}[r] ^{\gamma_1^{-1}} \ar@{-}[d]^{}  \ar@{}[dr]|{\chi_1^{-h}} & \ar@{-}[d]^{}
\\ \ar@{-}[r] |{\gamma_2} \ar@{-}[d]_{} \ar@{}[dr]|{\chi_2} & \ar@{-}[r] |{g} \ar@{-}[d]|{} \ar@{}[dr]|{}  & \ar@{-}[r] |{\gamma_2^{-1}} \ar@{-}[d]^{}  \ar@{}[dr]|{\chi_2^{-h}} & \ar@{-}[d]^{}
\\ \ar@{-}[r] _{\gamma_3} & \ar@{-}[r] _{g} & \ar@{-}[r] _{\gamma_3^{-1}} &
}}
\, = \,
\xybiglabels \vcenter{\xymatrix @=3pc @W=0pc @M=0pc @C=4pc{ \ar@{-}[r] ^{\gamma_1} \ar@{-}[d]_{} \ar@{}[dr]|{\chi_2\chi_1} & \ar@{-}[r] ^{g} \ar@{-}[d]|{}
\ar@{}[dr]|{} & \ar@{-}[r] ^{\gamma_1^{-1}} \ar@{-}[d]^{}  \ar@{}[dr]|{(\chi_2\chi_1)^{-h}} & \ar@{-}[d]^{}
\\ \ar@{-}[r] _{\gamma_3} & \ar@{-}[r] _{g} & \ar@{-}[r] _{\gamma_3^{-1}} &
}}
$$
The second Ver condition, i.e. $\Phi_{(\gamma,1_H)}(g) = {\rm Id}_{\Phi_\gamma(g)}$, and the first Hor condition, i.e. $\Phi_{\gamma_1}(\Phi_{\gamma_3}(g)) = \Phi_{\gamma_1\gamma_3}(g)$ and $\Phi_1(g)=g$, are both immediate. Finally the second Hor condition, i.e.
$\Phi_{(\gamma_,\chi_1)}( \Phi_{\gamma_4}(g) ) \circ  \Phi_{\gamma_1}(\Phi_{(\gamma_3,\chi_2)}(g)) = \Phi_{(\gamma_1\gamma_3,\chi_1(\gamma_1\rhd\chi_2))}(g)$, corresponds to the equality:
$$
\xybiglabels \vcenter{\xymatrix @=2.8pc @W=0pc @M=0pc  { \ar@{-}[r] ^{\gamma_1} \ar@{-}[d]_{} \ar@{}[dr]|{}  & \ar@{-}[r] ^{\gamma_3} \ar@{-}[d]_{} \ar@{}[dr]|{\chi_2}  & 
\ar@{-}[r] ^{g} \ar@{-}[d]|{} \ar@{}[dr]|{} & 
\ar@{-}[r] ^{\gamma_3^{-1}} \ar@{-}[d]^{}  \ar@{}[dr]|{\chi_2^{-h}}    & \ar@{-}[r] ^{\gamma_1^{-1}} \ar@{-}[d]^{}  \ar@{}[dr]|{} & \ar@{-}[d]^{}
\\ \ar@{-}[r] |{\gamma_1} \ar@{-}[d]_{} \ar@{}[dr]|{\chi_1} & \ar@{-}[r] |{\gamma_4} \ar@{-}[d]_{} \ar@{}[dr]|{} & 
\ar@{-}[r] |{g} \ar@{-}[d]|{} \ar@{}[dr]|{}  & 
\ar@{-}[r] |{\gamma_4^{-1}} \ar@{-}[d]^{}  \ar@{}[dr]|{} & \ar@{-}[r] |{\gamma_1^{-1}} \ar@{-}[d]^{}  \ar@{}[dr]|{\chi_1^{-h}} & \ar@{-}[d]^{}
\\ \ar@{-}[r] _{\gamma_2} & \ar@{-}[r] _{\gamma_4} & \ar@{-}[r] _{g} & \ar@{-}[r] _{\gamma_4^{-1}} & \ar@{-}[r] _{\gamma_2^{-1}} &
}}
\, = \,
\xybiglabels \vcenter{\xymatrix @=3pc @W=0pc @M=0pc @C=7pc{ \ar@{-}[r] ^{\gamma_1\gamma_3} \ar@{-}[d]_{} \ar@{}[dr]|{\chi_1(\gamma_1\rhd\chi_2)} & \ar@{-}[r] ^{g} \ar@{-}[d]|{}
\ar@{}[dr]|{} & \ar@{-}[r] ^{(\gamma_1\gamma_3)^{-1}} \ar@{-}[d]^{}  \ar@{}[dr]|{(\chi_1(\gamma_1\rhd\chi_2))^{-h}} & \ar@{-}[d]^{}
\\ \ar@{-}[r] _{\gamma_2\gamma_4} & \ar@{-}[r] _{g} & \ar@{-}[r] _{(\gamma_2\gamma_4)^{-1}} &
}}
$$
This follows from evaluating the two 2 by 2 arrays of squares without $g$ on the left hand side. Thus we have proved:

\begin{lemma}
The 2-group adjoint action of Definition \ref{def:adjoint} is a well-defined action in the sense of Definition \ref{def:2grp_action}.
\end{lemma}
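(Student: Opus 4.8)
The plan is to verify that the assignments in Definition \ref{def:adjoint} satisfy all the requirements of Definition \ref{def:2grp_action}, namely: (i) each $\Phi_\gamma$ is a genuine endofunctor of $\G$ (viewed as a category); (ii) each $\Phi_{(\gamma,\chi)}$ is a natural transformation $\Phi_{\gamma} \Rightarrow \Phi_{\partial(\chi)\gamma}$; and (iii) the two functoriality conditions F1 and F2 for a strict 2-functor hold. The key technical device, as with the double-groupoid calculations used throughout Section \ref{sec:doublegroupoid}, is to represent a morphism $(g,\eta)$ of $\G$ as a square in ${\cal D}(\G)$ and to realize the action of $\Phi_\gamma$ as horizontal pre- and post-whiskering by the constant squares on $\gamma$ and $\gamma^{-1}$, as in (\ref{eq:Phi-1-square}), and $\Phi_{(\gamma,\chi)}(g)$ as whiskering the square on $\chi$ (and its horizontal inverse $\chi^{-h}$) around the constant square on $g$, as in (\ref{eq:Phi-2-square}). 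Once everything is drawn as a composite of squares, each identity to be checked becomes an instance of the interchange law — the statement that a rectangular array of squares has a well-defined evaluation independent of the order of composition.

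Concretely, I would proceed step by step. First, from (\ref{eq:Phi-1-square}) read off that $\Phi_\gamma(g,\eta)$ has source $\gamma g \gamma^{-1}$ and target $\gamma g_2 \gamma^{-1}$ with $g_2 = \partial(\eta)g_1$, so the source/target maps are respected; that $\Phi_\gamma$ preserves identities is immediate since $\gamma \rhd 1_H = 1_H$; and that it preserves composition follows by stacking two such triple-whiskered squares vertically and collapsing the middle column, i.e.\ using functoriality of vertical composition in ${\cal D}(\G)$ together with $\gamma \rhd(\eta_2\eta_1) = (\gamma\rhd\eta_2)(\gamma\rhd\eta_1)$. Next, from (\ref{eq:Phi-2-square}) the source and target of $\Phi_{(\gamma_1,\chi)}(g)$ come out right, and the naturality square (\ref{eq:Phinatural}) reduces to an equality of two $2\times 1$ stacks of whiskered squares, which agree once the vertical compositions are carried out first — an interchange-law argument. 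Then the first F1 condition is a vertical-stacking identity for the $\chi$-whiskering (using $(\chi_2\chi_1)$-whiskering versus stacking the $\chi_1$- and $\chi_2$-whiskerings), the second F1 condition and the first F2 condition are immediate from the formulas, and the second F2 condition — the genuinely composite one — becomes the equality of two arrays displayed just before the lemma, which follows by evaluating the two $2\times 2$ sub-arrays ``without $g$'' and invoking interchange.

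The main obstacle, such as it is, is bookkeeping rather than conceptual: one must make sure that the whiskering conventions for ${\cal D}(\G)$ (how $G$ acts on $H$ when composing squares horizontally, and the formula $\eta^{-h} = g_1^{-1}\rhd\eta^{-1}$ for the horizontal inverse) are applied consistently, so that the $H$-labels on the collapsed squares genuinely match the formulas (\ref{eq:adjoint-functor-mor}) and (\ref{eq:adjoint-nat-trans}) and their horizontal inverses. In particular, in the second F2 condition the label $\chi_1(\gamma_1\rhd\chi_2)$ on the target square, and the claim that its horizontal inverse is $(\chi_1(\gamma_1\rhd\chi_2))^{-h}$, must be seen to arise correctly from composing the $\chi_1$-, $\chi_2$-, $\chi_2^{-h}$- and $\chi_1^{-h}$-whiskered squares around the constant $g$-column; this is exactly the horizontal-composition-of-squares formula from Section \ref{sec:doublegroupoid} applied twice. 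Everything else is a routine transcription of the square-calculus identities, so the proof is essentially a sequence of labelled diagrams each of which is an interchange-law equality, culminating in the final displayed equation, after which the lemma follows.
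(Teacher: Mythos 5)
Your proposal is correct and follows essentially the same route as the paper: each requirement (functoriality of $\Phi_\gamma$, naturality of $\Phi_{(\gamma,\chi)}$, and the conditions F1 and F2) is verified by drawing the relevant composite as a rectangular array of squares in ${\cal D}(\G)$ and invoking the interchange law, with the second F2 condition reduced, exactly as in the paper, to evaluating the two $2\times 2$ sub-arrays not containing $g$. The bookkeeping points you flag (the whiskering conventions and the formula for the horizontal inverse $\chi^{-h}$) are precisely the ones the paper's displayed arrays are designed to handle.
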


\begin{remark}
We can now display the functor $\hat{\Phi}$ of Definition \ref{def:2grp_action-Phi-hat}, in terms of squares, and therefore finish describing this action in the notation of Definition \ref{def:act-notation}. Namely $(\gamma_1,\chi) \act (g_1,\eta) = \hat{\Phi}((\gamma_1,\chi), (g_1, \eta))$ is given by:
$$
\xybiglabels \vcenter{\xymatrix @=3pc @W=0pc @M=0pc { \ar@{-}[r] ^{\gamma_1} \ar@{-}[d]_{} \ar@{}[dr]|{\chi} & \ar@{-}[r] ^{g_1} \ar@{-}[d]|{}
\ar@{}[dr]|{\eta} & \ar@{-}[r] ^{\gamma^{-1}} \ar@{-}[d]^{}  \ar@{}[dr]|{\chi^{-h}} & \ar@{-}[d]^{}
\\ \ar@{-}[r] _{\gamma_2} & \ar@{-}[r] _{g_2} & \ar@{-}[r] _{\gamma_2^{-1}} &
}}
\, = \,
\xybiglabels \vcenter{\xymatrix @=3pc @W=0pc @M=0pc { \ar@{-}[r] ^{} \ar@{-}[d]_{} \ar@{}[dr]|{\chi} & \ar@{-}[r] ^{\gamma_1} \ar@{-}[d]_{} \ar@{}[dr]|{} & 
\ar@{-}[r] ^{g_1} \ar@{-}[d]|{} \ar@{}[dr]|{\eta} & 
\ar@{-}[r] ^{\gamma^{-1}} \ar@{-}[d]^{}  \ar@{}[dr]|{} & \ar@{-}[r] ^{} \ar@{-}[d]^{}  \ar@{}[dr]|{\chi^{-1}} & \ar@{-}[d]^{}
\\ \ar@{-}[r] _{\partial(\chi)} & \ar@{-}[r] _{\gamma_1} & \ar@{-}[r] _{g_2} & \ar@{-}[r] _{\gamma_1^{-1}} & \ar@{-}[r] _{\partial(\chi)^{-1}} &
}}
$$
The 5-square array on the right makes it clear that the 2-group adjoint action of $(\gamma_1, \chi)$ can be regarded as the ordinary adjoint action of $\gamma_1$ on edges, extended to squares labelled by $\gamma_1$ acting on squares, followed by the adjoint action of $\chi$, in the sense of conjugation with the square labelled $\chi$.
Evaluating the array and dropping the indices gives an algebraic formula for this action:
\begin{equation}
(\gamma,\chi) \act (g,\eta) = (\gamma g \gamma^{-1}, \chi (\gamma\rhd \eta) (\gamma g \gamma^{-1})\rhd \chi^{-1})
\label{eq:Phihat-adjointformula}
\end{equation}
\end{remark}

\section{Transformation Double Category for a 2-Group Action}\label{sec:transdoublecat}

In this section, we recall the construction of the transformation
groupoid for a group action on a set, and consider the analogous
construction for a 2-group action on a category.

If we are given an action of a group on a set, there is a groupoid
which corresponds to it.

\begin{definition}Given a group action $\phi : G \ra End(X)$, the
  \defn{transformation} groupoid $X \wquot G$ is the groupoid with:
  \begin{itemize}
  \item \defn{Objects}: $x \in (X \wquot G)^{(0)} = X$
  \item \defn{Morphisms}: $(\gamma,x) \in (X \wquot G)^{(1)} = G
    \times X$, with source and target maps $s(\gamma,x) = x$, and
    $t(\gamma,x) = \phi_\gamma(x)$
  \item \defn{Composition}: $(\gamma', \phi_\gamma(x))\circ (\gamma, x)= (\gamma'\gamma,x)$
  \end{itemize}
\label{def:transfngroupoid}
\end{definition}
It is clear that this is a groupoid, whose morphisms are invertible
since $G$ is a group.  The composition then encodes both the group
multiplication (in the first component) and the action (in the second
component). Formally, the set $P = (X \wquot G)^{(1)} \times_{(X
  \wquot G)^{(0)}} (X \wquot G)^{(1)}$ of composable pairs of
morphisms in $X \wquot G $ is then given by the pullback square:
\begin{equation}
  \xymatrix{
    P \ar[r]^{} \ar[d]_{} & G \times X \ar[d]^{\hat{\phi}} \\
    G \times X \ar[r]_{\pi_X} & X
  }
\end{equation}
There is an obvious commuting diagram with $G\times G\times X$
replacing $P$ in the above diagram, and hence, by the universal
property of this pullback, there is a unique map from $G \times G
\times X$ to $P$, given by:
\begin{equation}
(\gamma',\gamma,x)\mapsto ((\gamma', \phi_\gamma(x)),(\gamma,x))
\end{equation}
which is clearly an isomorphism. In this way, the composition map from
$P$ to $G\times X$ agrees with the map $ m \times Id_X$ in
(\ref{eq:actioncondition-diag}).

In the next subsections, we will develop a similar construction in
$\cat{Cat}$, which will give a transformation \textit{double category}
for a categorical group action. As discussed in Section
\ref{sec:doublegroupoid}, we reserve the term ``double category'' for
a certain symmetric point of view of this structure. To construct it,
however, we use an equivalent definition as a $\cat{Cat}$-category.

\subsection{Construction of the Transformation Cat-Category
  for a 2-Group Action}
\label{sec:transcat-cat}

The most obvious way to define an analog of the transformation
groupoid in the situation of a 2-group action comes by simply
following the same constructions from an ordinary group action,
replacing $G$ with $\G$ and $X$ with $\C$. Thus, one has action
diagrams in $\cat{Cat}$.

Next we will construct a transformation groupoid as we did for group
actions on sets, but it will be internal to $\cat{Cat}$, so we call it a
\textit{transformation $\cat{Cat}$-groupoid}.

Thus, now one has a category
$(\C \wquot \G)^{(0)}$ of objects, and a category $(\C \wquot
\G)^{(1)}$ of morphisms. These, of course, have objects and morphisms
of their own, but this fact is invisible to the construction and
becomes important only when we want to describe the resulting structure
concretely.

\begin{definition} 
  Given a 2-group $\G$, a category $\C$, and an action of $\G$ on $\C$
  as in Definitions \ref{def:2grp_action} and
  \ref{def:2grp_action-Phi-hat}, the transformation $\cat{Cat}$-groupoid $\C
  \wquot \G$ is the groupoid internal to $\cat{Cat}$ with:
  \begin{itemize}
  \item \defn{Category of objects}:  $(\C \wquot \G)^{(0)} = \C$.
  \item \defn{Category of morphisms}: $(\C \wquot \G)^{(1)} = \G
    \times \C$, with 
    \begin{itemize}
    \item \defn{Source} functor $s=\pi_{\C} : \G \times \C \ra \C$
    \item \defn{Target} functor $t = \hat{\Phi} : \G \times \C \ra \C$
    \item \defn{Identity inclusion} functor $e = 1_{\G} \times Id_{\C}
      : \C \ra \G \times \C$
    \item \defn{Inverse} functor $inv = (inv_\G, t) : \G \times \C \ra
      \G \times \C$
    \end{itemize}
  \item \defn{Category of composable pairs}: $\cat{P}$, given by the pullback diagram
    \begin{equation}
      \xymatrix{
        \cat{P} \ar[r]^{} \ar[d]_{} & \G \times \C \ar[d]^{\hat{\Phi}} \\
        \G \times \C \ar[r]_{\pi_{\C}} & \C
      }
    \end{equation}
  \item \defn{Composition functor}: given by the action square
    (\ref{eq:catactioncondition-diag})
  \end{itemize}
\label{def:internalC//G}
\end{definition}

The situation is entirely analogous to the transformation groupoid
for an ordinary group $G$, as is seen easily by considering the effect
on objects. (Thus the inverse functor, on objects, takes $(\gamma,x)
\mapsto (\gamma^{-1}, \gamma \act x)$, just as with an ordinary
transformation groupoid.) Analogously with the situation for groups,
we can think of $(\C \wquot \G)^{(1)}$ as a semidirect product
2-group; and there is a canonical isomorphism between $\G\times \G
\times \C$ and $\cat{P}$. We should check that this definition makes
sense.

\begin{theorem}
  The transformation $\cat{Cat}$-groupoid $\C \wquot \G$ is a well-defined
  groupoid internal to $\cat{Cat}$.
\label{thm:transf-gpd}
\end{theorem}
\begin{proof}
  To confirm this, we must check that the source, target, composition, identity inclusion and inverse
  functors are well defined, and satisfy the usual
  properties for a groupoid, primarily associativity and the left and
  right unit laws.

  The fact that the source and target maps are functors is obvious,
  since they are just the projection from a product, and $\hat{\Phi}$
  respectively. The first is necessarily a functor, while the second
  is a functor by definition. 

Recall our notation:
\begin{equation} 
\gamma \act x= \Phi_\gamma(x) = \hat{\Phi}(\gamma, x) \qquad (\gamma, \chi)\act f = \hat{\Phi} ((\gamma, \chi), f)
\end{equation}
and make explicit the source, target and composition functors at the
morphism level as follows:
\begin{equation}
s((\gamma, \chi), f) =f \qquad t((\gamma, \chi), f)=(\gamma, \chi)\act f 
\end{equation}
\begin{equation}
((\gamma_1, \chi_1), (\gamma_3, \chi_2) \act f) \circ ((\gamma_3, \chi_2),f)=( (\gamma_1\gamma_3, \chi_1(\gamma_1\rhd \chi_2)), f)
\label{eq:explicitcomp}
\end{equation}
(see (\ref{eq:Phihat-ASmorphisms})). The target of the composition is well-defined due to the commutativity
of (\ref{eq:catactioncondition-diag}):
\begin{equation}
(\gamma_1, \chi_1) \act ((\gamma_3, \chi_2) \act f) = (\gamma_1\gamma_3, \chi_1(\gamma_1\rhd \chi_2))\act f
\label{eq:targetcomposition}
\end{equation} 

To show the functoriality of composition, we refer to the following array of squares in ${\cal D}(\G)$, underlying the calculation:
\begin{equation}
\xybiglabels \vcenter{\xymatrix @=3pc @W=0pc @M=0pc { \ar@{-}[r] ^{\gamma_1} \ar@{-}[d]_{} \ar@{}[dr]|{\chi_1} & \ar@{-}[r] ^{\gamma_3} \ar@{-}[d]|{}
\ar@{}[dr]|{\chi_2} &  \ar@{-}[d]^{}
\\ \ar@{-}[r] |{\gamma_2} \ar@{-}[d]_{} \ar@{}[dr]|{\chi_3} & \ar@{-}[r] |{\gamma_4} \ar@{-}[d]|{} \ar@{}[dr]|{\chi_4}  & \ar@{-}[d]^{}
\\ \ar@{-}[r] _{\gamma_5} & \ar@{-}[r] _{\gamma_6}  &
}}
\label{diag:intlaw}
\end{equation}
We denote composition in $\G\times \C$ by $\bar{\circ}$ and composition in $\C$ by $\circ_{\C}$, to distinguish them from the composition functor $\circ$.
Then we have:
\begin{eqnarray*}
((\gamma_4,\chi_4),g)\,\bar{\circ}\, ((\gamma_3,\chi_2),f)&  = & ((\gamma_3, \chi_4\chi_2),g\circ_{\C}f) \\
((\gamma_2,\chi_3),(\gamma_4,\chi_4)\act g)\,\bar{\circ}\,((\gamma_1,\chi_1),(\gamma_3,\chi_2)\act f)&  = &
((\gamma_1, \chi_3\chi_1),((\gamma_4,\chi_4)\act g) \circ_{\C} \\
& & ((\gamma_3,\chi_2)\act f)) \\
& = & ((\gamma_1, \chi_3\chi_1),(\gamma_3, \chi_4\chi_2)\act (g\circ_{\C}f))
\end{eqnarray*}
where we use (\ref{eq:Phihat-func1}) in the final equation. We also have:
\begin{eqnarray*}
((\gamma_2,\chi_3),(\gamma_4,\chi_4)\act g)\,{\circ}\,((\gamma_4,\chi_4), g)&  = & ((\gamma_2\gamma_4, \chi_3(\gamma_2\rhd \chi_4)),g) \\
((\gamma_1,\chi_1),(\gamma_3,\chi_2)\act f)\,{\circ}\,((\gamma_3,\chi_2), f)&  = & ((\gamma_1\gamma_3, \chi_1(\gamma_1\rhd \chi_2)),f) \\
\end{eqnarray*}
Thus it remains to show the equality of two expressions:
$$
((\gamma_2\gamma_4, \chi_3(\gamma_2\rhd \chi_4)),g)\, \bar{\circ} \, ((\gamma_1\gamma_3, \chi_1(\gamma_1\rhd \chi_2)),f)
= ((\gamma_1\gamma_3, \chi_3(\gamma_2\rhd \chi_4)  \chi_1(\gamma_1\rhd \chi_2)),g\circ_{\C}f) 
$$
and
$$
((\gamma_1, \chi_3\chi_1),(\gamma_3, \chi_4\chi_2)\act (g\circ_{\C}f)) \circ ((\gamma_3, \chi_4\chi_2),g\circ_{\C}f) = 
((\gamma_1\gamma_3, \chi_3\chi_1 \gamma_1\rhd (\chi_4\chi_2) ),g\circ_{\C}f) .
$$
This equality is immediate from the interchange law in ${\cal D}(\G)$ applied to the array (\ref{diag:intlaw}).  

Associativity of the composition functor follows from the fact that the horizontal composition
of three squares in $\cal{D}(\G)$ has a unique evaluation:
\begin{equation}
\xybiglabels \vcenter{\xymatrix @=3pc @W=0pc @M=0pc { \ar@{-}[r] ^{\gamma_1} \ar@{-}[d]_{} \ar@{}[dr]|{\chi_1} & \ar@{-}[r] ^{\gamma_3} \ar@{-}[d]|{}
\ar@{}[dr]|{\chi_2} & \ar@{-}[r] ^{\gamma_5} \ar@{-}[d]^{}  \ar@{}[dr]|{\chi_3} & \ar@{-}[d]^{}
\\ \ar@{-}[r] _{\gamma_2} & \ar@{-}[r] _{\gamma_4} & \ar@{-}[r] _{\gamma_6} &
}}
\label{eq:assoc-3squares}
\end{equation}
The identity inclusion $e$ is clearly a functor, and satisfies:
\begin{eqnarray*}
((\gamma, \chi),  f) \circ ((1_G,1_H),f) & = & ((\gamma, \chi),  f) \\
((1_G,1_H), (\gamma, \chi) \act f) \circ ((\gamma, \chi),f) & = & ((\gamma, \chi),f)
\end{eqnarray*}
where in the first equation we have a composable pair, since (if the
target of $f$ is $y$):
$$
t((1_G,1_H),f) = (1_G,1_H)\act f = \Phi_{(1_G,1_H)}(y) \circ \Phi_{1_G}(f)=\Phi_{(1_G,1_H)}(y) \circ f = f
$$ using (\ref{eq:PhihatfromPhi}) in the second equality,
functoriality of $\Phi$ in the third equality and property F2 (see
Def. \ref{def:2grp_action} ) in the final equality. 

Finally, the inverse functor assigns, to every $((\gamma, \chi), f)$
in the category of morphisms, its inverse $((\gamma^{-1}, \chi^{-h}),
(\gamma, \chi)\act f)$, where we note that $(\gamma^{-1},
\chi^{-h})= 
(\gamma^{-1}, (\partial(\chi)\gamma)^{-1}\rhd \chi^{-1}) =
(\gamma^{-1}, \gamma^{-1}\rhd \chi^{-1})$. This follows
immediately from (\ref{eq:explicitcomp}). Functoriality of the inverse functor follows 
easily from (\ref{eq:Phihat-func1}) and functoriality of the horizontal inverse in 
${\cal D}(\G)$.
\end{proof}

This construction
of a $\cat{Cat}$-category has a slight drawback, which is the apparent asymmetry of its definition. In the next section we will address this issue.

\subsection{The Transformation Double Category}
\label{sec:transdouble}

The construction
we have given in the previous section naturally produces a groupoid
internal to $\cat{Cat}$. However, the view of a double category as an
internal category in $\cat{Cat}$ obscures the underlying symmetry of
this structure, and the structure in the ``transverse'' direction to
the categories of objects and morphisms.

As we remarked in Section \ref{sec:doublegroupoid},
$\cat{Cat}$-categories are equivalent to a structure defined in a more
symmetric way, having two types of morphisms (horizontal and
vertical), and squares. The properties can be deduced from the
equivalence: for example, the ``interchange law'', which says that mixed
horizontal and vertical composites can be taken in any order, amounts
to the functoriality of the composition $\circ$.

Now, the definition of double categories is symmetric under exchanging the
roles of ``horizontal'' and ``vertical'' morphisms in our diagrams of
squares. One can reflect all squares in a diagonal, and obtain another
double category, which we call the \textit{transpose} of the first
double category. The resulting structure, naturally, still has an interpretation
as a category internal to $\cat{Cat}$.

More technically, we have the following:

\begin{definition}
  If ${\cal D}$ is an internal category in $\cat{Cat}$, then let the
  \defn{transpose} of ${\cal D}$, which we denote $\widetilde{\cal
    D}$, be the internal category in $\cat{Cat}$ defined by the
  following:
  \begin{itemize}
  \item The category of objects has
    \begin{itemize}
    \item Objects: the objects of the category  ${\cal D}^{(0)}$
    \item Morphisms: the objects of the category ${\cal D}^{(1)}$
    \end{itemize}
  \item The category of morphisms has
    \begin{itemize}
    \item Objects: the morphisms of the category ${\cal D}^{(0)}$
    \item Morphisms: the morphisms of the category ${\cal D}^{(1)}$
    \end{itemize}
  \item The identity inclusion, source and target, and composition
    functors $(\widetilde{e}, \widetilde{s}, \widetilde{t},
    \widetilde{\circ})$ have as object maps the corresponding
    structure maps from the category ${\cal D}^{(0)}$, and as morphism
    maps the corresponding structure maps from the category ${\cal
      D}^{(1)}$
  \end{itemize}
\end{definition}

This is a double-category analog of the operation of taking the
``opposite'' of a category: another category in which morphisms are
taken to be oriented in the opposite direction. Indeed, together with
such opposite operations in both horizontal and vertical directions,
the transpose generates a whole group of operations which take one
double category to another: it is plainly isomorphic to the dihedral
group $D_4$, the symmetries of a generic square, since each is
determined by the source vertex of a generic square, together with the
sense of horizontal and vertical. The opposites do not illustrate much
new structure, however, so we will restrict our attention to the
transpose.

The fact that the transpose is again an internal category in
$\cat{Cat}$ is a standard consequence of basic facts about
internalization. We do not want to assume all readers are accustomed
to such internal constructions, so will sketch a proof 
to convey the essential idea.

\begin{lemma}\label{lemma:transposedoublecat}
  If ${\cal D}$ is a category internal in $\cat{Cat}$, so is
  $\widetilde{\cal D}$.
\end{lemma}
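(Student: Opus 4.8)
The plan is to exploit the fact, recalled in Section~\ref{sec:doublegroupoid}, that a category internal to $\cat{Cat}$ is the same structure as a double category, whose defining data and axioms are symmetric under interchanging the ``horizontal'' and ``vertical'' directions; the transpose is precisely the operation carrying out that interchange, so the outcome is again a structure of the same kind. Rather than invoke this abstractly, I would spell it out directly in terms of the internal-category data, which is what the statement is phrased in.

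First I would unpack ${\cal D}$ into its components. Let $O$ and $V$ be the objects and morphisms of the category ${\cal D}^{(0)}$, and $H$ and $S$ the objects and morphisms of ${\cal D}^{(1)}$. Each structure functor $e,s,t$ of ${\cal D}$ is then a pair consisting of a map on objects and a map on morphisms, and I would note that the pullback ${\cal D}^{(1)}\times_{{\cal D}^{(0)}}{\cal D}^{(1)}$ of categories has, as its objects, the pairs in $H\times H$ that agree in $O$ under $s$ and $t$, and, as its morphisms, the pairs in $S\times S$ that agree in $V$, with componentwise structure; hence the composition functor $\circ$ also splits into an ``object'' composition on $O$-data valued in $H$ and a ``morphism'' composition on $V$-data valued in $S$. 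Taking object components of all the structure functors, together with the internal-category equations they satisfy, then yields a small category with object set $O$ and morphism set $H$; taking morphism components yields a small category with object set $V$ and morphism set $S$. By the definition of the transpose, these two categories are exactly $\widetilde{\cal D}^{(0)}$ and $\widetilde{\cal D}^{(1)}$.

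Next I would check that $\widetilde{e},\widetilde{s},\widetilde{t},\widetilde{\circ}$ are functors and that they obey the internal-category axioms. For functoriality: the object component of, say, $\widetilde{s}$ is the source map of the category ${\cal D}^{(0)}$ and its morphism component is the source map of the category ${\cal D}^{(1)}$, so asserting that $\widetilde{s}$ is a functor $\widetilde{\cal D}^{(1)}\to\widetilde{\cal D}^{(0)}$ is, after unwinding the structures built in the previous step, exactly the assertion that $s:{\cal D}^{(1)}\to{\cal D}^{(0)}$ commutes with source, target, identities and composition --- i.e. that $s$ is a functor, which it is; the same passage turns functoriality of $t$, $e$ and $\circ$ into functoriality of $\widetilde{t}$, $\widetilde{e}$ and $\widetilde{\circ}$ (for $\widetilde{\circ}$ using again the componentwise description of the relevant pullback). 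For the axioms ($\widetilde{s}\,\widetilde{e}=\widetilde{t}\,\widetilde{e}=\mathrm{id}$, the source/target conditions on $\widetilde{\circ}$, associativity, and the unit laws), each is an equation between functors whose object component is the corresponding axiom for ${\cal D}^{(0)}$ and whose morphism component is the corresponding axiom for ${\cal D}^{(1)}$; both being genuine categories, all of these hold.

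The main obstacle, and really the only point deserving explicit care, is the bookkeeping step used twice above: that ``taking object/morphism components'' commutes with ``forming the pullback'', so that the $\cat{Cat}$-pullback defining the object of composable pairs of ${\cal D}$ has objects and morphisms given respectively by the $\cat{Set}$-pullbacks of the objects and morphisms of ${\cal D}^{(1)}$ over those of ${\cal D}^{(0)}$. (Abstractly this is because the object and morphism functors $\cat{Cat}\to\cat{Set}$ preserve limits, being respectively right adjoint to the discrete-category functor and to $S\mapsto\coprod_S\mathbf{2}$; concretely it can just be checked by hand.) Once that is granted, everything else is a symmetric bookkeeping exercise, which is why we only sketch it; its content is simply that a double category ``looks the same from either side''.
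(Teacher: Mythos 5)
Your proposal is correct in substance and follows essentially the same route as the paper's sketch: unpack the four structure functors of ${\cal D}$ into their object and morphism components, observe that the resulting grid of compatibility conditions is symmetric under transposing ``which component'' against ``which structure map'', and reassemble the components into the internal category $\widetilde{\cal D}$. You are in fact more careful than the paper on the one point that genuinely needs care, namely that the $\cat{Cat}$-pullback of composable pairs decomposes componentwise into the $\cat{Set}$-pullbacks of objects and of morphisms; your justification (the object and morphism functors $\cat{Cat}\ra\cat{Set}$ are right adjoints, hence preserve pullbacks) is right, and the paper passes over this silently.

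One bookkeeping claim should be corrected, although it does not damage the argument: functoriality of $\widetilde{s}$ is \emph{not} the same set of four equations as functoriality of $s$. Unwinding, the assertion that $\widetilde{s}$ preserves composition says that the morphism map of the composition functor $\circ$ commutes with the source maps of the categories ${\cal D}^{(1)}\times_{{\cal D}^{(0)}}{\cal D}^{(1)}$ and ${\cal D}^{(1)}$ --- an instance of the functoriality of $\circ$, not of $s$ (and precisely the instance the paper computes). Likewise, $\widetilde{s}$ preserving identities is an instance of the functoriality of $e$, its compatibility with targets an instance of the functoriality of $t$, and only its compatibility with sources comes from the functoriality of $s$ itself. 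The transpose permutes the $4\times 4$ grid of conditions rather than fixing its rows: the functoriality of $\widetilde{s}$ collects the ``commutes with source maps'' condition across all four of $s,t,e,\circ$, and so on. Since you invoke the functoriality of all four original structure functors in aggregate, the conclusion that all of $\widetilde{s},\widetilde{t},\widetilde{e},\widetilde{\circ}$ are functors still follows; only the one-to-one pairing you assert needs to be replaced by this transposed pairing.
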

\begin{proof}
  First, one must check that the structure maps $(\widetilde{e}, \widetilde{s}, \widetilde{t},
    \widetilde{\circ})$ are
  functors. This follows from the compatibility conditions for ${\cal
    D}$. We give the example of the source functor $\widetilde{s}$ here: the
  others follow similar lines.
  
  We claim that the source functor in $\widetilde{\cal D}$
  \begin{equation}
    \widetilde{s} : \widetilde{\cal D}^{(1)} \ra \widetilde{\cal D}^{(0)}
  \end{equation} preserves composition, that is, for two composable morphisms $\chi_1$ and $\chi_2$ in $\widetilde{\cal D}^{(1)}$
  \begin{equation}
    \widetilde{s}(\chi_1 \circ \chi_2) = \widetilde{s}(\chi_1) \circ
    \widetilde{s}(\chi_2)
  \end{equation}
  where the compositions are in $\widetilde{\cal D}^{(1)}$ and
  $\widetilde{\cal D}^{(0)}$ respectively. Interpreted in ${\cal D}$,
  this is the statement that the composition functor $\circ : {\cal
    D}^{(1)} \times_{{\cal D}^{(0)}} {\cal D}^{(1)} \ra {\cal
    D}^{(1)}$ is compatible with the source maps in ${\cal D}^{(1)}$
  and ${\cal D}^{(0)}$, which is just part of the fact that $\circ$ is
  a functor. Likewise $\widetilde{s}$ is compatible with the source,
  target, and identity inclusion maps in ${\cal D}^{(1)}$ and ${\cal
    D}^{(0)}$. Similar arguments prove the functoriality and
  compatibility of $\widetilde{t}$, $\widetilde{e}$, and
  $\widetilde{\circ}$. So all the structure maps
  $(\widetilde{e},\widetilde{s},\widetilde{t},\widetilde{\circ})$ are
  well-defined morphisms in $\cat{Cat}$.

  Furthermore, since the object and morphism maps of
  $(\widetilde{e},\widetilde{s},\widetilde{t},\widetilde{\circ})$
  satisfy the axioms for a category separately by definition, the
  functors satisfy them as well. Since its structure maps are well
  defined and satisfy all the axioms for a category, $\widetilde{\cal
    D}$ is indeed an internal category in $\cat{Cat}$.
\end{proof}

In the above sketch, we deliberately chose to compare the interaction
of different maps in the two directions, namely source and
composition, to emphasize the differences. It is worth remarking that
some conditions are symmetrical: for example, the fact that $\circ$ is
functorial, and therefore preserves composition, corresponds in the
transpose to exactly the same fact about $\widetilde{\circ}$. This is
a form of the \textit{interchange law}.

The above is a special case of a more general duality which can occur
with internalization. Suppose we have any two types of objects, $X$
and $Y$, which can be defined as collections of objects and morphisms
satisfying certain diagrammatic axioms. Then it is equivalent to
define $X$-objects internal to the category of $Y$-objects, and
$Y$-objects internal to the category of $X$-objects.

Next, we want use the transpose to better understand the structure of
the transformation $\cat{Cat}$-groupoid $\C \wquot \G$ associated to a
2-group action. We state the structure of the transpose of
$\C \wquot \G$ as a theorem. The essential new observation is that the transpose
$\widetilde{\C \wquot \G}$ is built from transformation groupoids
associated to group actions in the usual sense.

\begin{theorem}\label{thm:transposetransform}
  Given a 2-group $\G$, a category $\C$, and an action of $\G$ on $\C$
  given by $\hat{\Phi}: \G \times \C \ra \C$ in $\cat{Cat}$, the transpose of $\C \wquot \G$
  is the category $\widetilde{\C \wquot \G}$ internal to $\cat{Gpd}$, with:
  \begin{itemize}
  \item Groupoid of Objects: $(\widetilde{\C \wquot \G})^{(0)}$,
    equal to the transformation groupoid $\C^{(0)} \wquot
    \G^{(0)}$ associated to the action given by $\hat{\Phi}^{(0)}$.
  \item Groupoid of Morphisms: $(\widetilde{\C \wquot \G})^{(1)}$, equal to the transformation
    groupoid $\C^{(1)} \wquot \G^{(1)}$ associated to the action given
    by $\hat{\Phi}^{(1)}$.
  \item Identity inclusion  
    functor 
    \begin{equation}
      \widetilde{e} : \C^{(0)} \wquot \G^{(0)} \ra \C^{(1)} \wquot \G^{(1)}
    \end{equation}
    source and target functors
    \begin{equation}
      \widetilde{s},\widetilde{t} : \C^{(1)} \wquot \G^{(1)} \ra \C^{(0)} \wquot \G^{(0)}
    \end{equation}
    and composition functor
    \begin{equation}
      \widetilde{\circ} : (\C^{(1)} \wquot \G^{(1)}) \times_{\C^{(0)} \wquot \G^{(0)}} (\C^{(1)} \wquot \G^{(1)}) \ra \C^{(1)} \wquot \G^{(1)}
    \end{equation}
    are the functors whose object maps are the corresponding
    $(e,s,t,\circ)$ for $\C$ and whose morphism maps are those for $\G
    \times \C$.
  \end{itemize}
\end{theorem}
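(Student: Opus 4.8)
The plan is to unwind the definition of the transpose applied to $\C\wquot\G$ and then match the resulting data, one level at a time, against Definition~\ref{def:transfngroupoid}. Since $(\C\wquot\G)^{(0)} = \C$ and $(\C\wquot\G)^{(1)} = \G\times\C$, the transpose $\widetilde{\C\wquot\G}$ has a category of objects whose objects are those of $\C$ and whose morphisms are the objects of $\G\times\C$, i.e.\ pairs $(\gamma,x)\in\G^{(0)}\times\C^{(0)}$; and a category of morphisms whose objects are the morphisms of $\C$ and whose morphisms are the morphisms of $\G\times\C$, i.e.\ pairs $((\gamma,\chi),f)\in\G^{(1)}\times\C^{(1)}$. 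These are already the underlying graphs of $\C^{(0)}\wquot\G^{(0)}$ and $\C^{(1)}\wquot\G^{(1)}$ respectively, so what remains is to check that the category structures coincide and that the structure functors are as claimed.

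For the groupoid of objects, I would observe that the source, target, identity-inclusion and composition operations \emph{within} $(\widetilde{\C\wquot\G})^{(0)}$ are, by the transpose construction, the \emph{object components} of the structure functors $s,t,e,\circ$ of $\C\wquot\G$ listed in Definition~\ref{def:internalC//G}. The object component of $s=\pi_{\C}$ is the projection $(\gamma,x)\mapsto x$; that of $t=\hat\Phi$ is $\hat\Phi^{(0)}$, which is the $\G^{(0)}$-action $\gamma\act x$; that of $e=1_{\G}\times Id_{\C}$ is $x\mapsto(1_G,x)$; and the object component of the composition functor, read off from the action square~(\ref{eq:catactioncondition-diag}), sends a composable pair $\bigl((\gamma_1,\gamma_3\act x),(\gamma_3,x)\bigr)$ to $(\gamma_1\gamma_3,x)$. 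Comparing with Definition~\ref{def:transfngroupoid}, this is precisely the transformation groupoid of the action of the group $\G^{(0)}$ on the set $\C^{(0)}$ via $\hat\Phi^{(0)}$; it is a genuine groupoid because $\G^{(0)}$ is a group.

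The groupoid of morphisms is handled the same way, using now the \emph{morphism components} of $s,t,e,\circ$, i.e.\ the structure maps of the category $\G\times\C$: the morphism component of $\pi_{\C}$ is $((\gamma,\chi),f)\mapsto f$; that of $\hat\Phi$ is $\hat\Phi^{(1)}$, sending $((\gamma,\chi),f)$ to $(\gamma,\chi)\act f$; that of $e$ is $f\mapsto((1_G,1_H),f)$; and the morphism component of the composition functor is~(\ref{eq:explicitcomp}), sending $\bigl(((\gamma_1,\chi_1),(\gamma_3,\chi_2)\act f),((\gamma_3,\chi_2),f)\bigr)$ to $((\gamma_1\gamma_3,\chi_1(\gamma_1\rhd\chi_2)),f)$, whose first coordinate is exactly the group product $(\gamma_1,\chi_1)(\gamma_3,\chi_2)$ in $\G^{(1)}\cong G\ltimes H$ as given by~(\ref{cm6}). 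So this is the transformation groupoid of the action of the group $\G^{(1)}$ on the set $\C^{(1)}$ via $\hat\Phi^{(1)}$. The one point genuinely needing attention is that $\hat\Phi^{(1)}$ really is an action of the \emph{group} $\G^{(1)}$ on the \emph{set} $\C^{(1)}$: associativity of the action is the morphism-level component~(\ref{eq:Phihat-ASmorphisms}) of the action-square diagram, and the unit law $(1_G,1_H)\act f = f$ follows from strictness of the $2$-functor $\Phi$ (cf.\ F1, F2 and Lemma~\ref{lemma:phifunctorial}), with~(\ref{eq:Phihat-func2}) giving the identity-on-morphisms case; invertibility in $\C^{(1)}\wquot\G^{(1)}$ then follows as in Definition~\ref{def:transfngroupoid} from $\G^{(1)}$ being a group.

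Finally, by construction of the transpose the functors $\widetilde e,\widetilde s,\widetilde t,\widetilde\circ$ have as object maps the structure maps $e,s,t,\circ$ of the category $\C$ and as morphism maps those of $\G\times\C$, which is exactly the description in the statement, and Lemma~\ref{lemma:transposedoublecat} already guarantees these are functors satisfying the internal-category axioms; since the categories of objects and of morphisms have just been identified as groupoids, $\widetilde{\C\wquot\G}$ is then a category internal to $\cat{Gpd}$. The main obstacle is not conceptual but organisational: because the transpose interchanges the internal direction with the categorical direction, one must keep careful track of which component (object or morphism) of which structure map of $\C\wquot\G$ furnishes each piece of the transformation-groupoid data, and it is easy to get turned around; the underlying verifications themselves are all either immediate or already recorded in Lemma~\ref{lem:transf-gpd}.
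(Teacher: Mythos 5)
Your proposal is correct and follows essentially the same route as the paper's proof: unwind the transpose of Definition~\ref{def:internalC//G}, identify the object/morphism components of $s,t,e,\circ$ as the structure of the two transformation groupoids $\C^{(0)}\wquot\G^{(0)}$ and $\C^{(1)}\wquot\G^{(1)}$, and invoke Lemma~\ref{lemma:transposedoublecat} for the functoriality of $\widetilde{e},\widetilde{s},\widetilde{t},\widetilde{\circ}$. Your explicit check that $\hat\Phi^{(1)}$ is a genuine group action of $\G^{(1)}\cong G\ltimes H$ on $\C^{(1)}$ is a detail the paper leaves implicit, but it does not change the argument.
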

\begin{proof}  Since this is a special case of the transpose construction of Lemma \ref{lemma:transposedoublecat}, we just need to
show the equalities for $(\widetilde{\C \wquot \G})^{(0)}$ and $(\widetilde{\C \wquot \G})^{(1)}$.
  First, consider the action as internal to $\cat{Cat}$, by the
  inclusion of $\cat{Gpd}$ in $\cat{Cat}$. Then the case of the groupoid of objects is
  clear: its objects are the objects of $x \in \C$, and its morphisms
  are objects of $(\C \wquot \G)^{(1)} = \G \times \C$, i.e.
  they are labeled by pairs $(\gamma,x) \in \G^{(0)} \times
  \C^{(0)}$, and $(\gamma,x)$, as a morphism in $(\widetilde{\C \wquot \G})^{(0)}$, has source $x$ and target  $\gamma \act x$.
  Composition is determined by the object map of the composition functor 
  in $\C \wquot \G$ (\ref{eq:explicitcomp}) , i.e. by the monoidal product
  $\otimes$ of $\G^{(0)}$. But this is exactly the transformation groupoid
  $\C^{(0)} \wquot \G^{(0)}$, where $\G^{(0)}$ is a group with product
  $\otimes$, acting on $\C^{(0)}$ by $\hat{\Phi}^{(0)}$.

  The argument for the groupoid of morphisms $(\widetilde{\C \wquot \G})^{(1)}$ is substantially the
  same. Its objects are the morphisms of $\C$, i.e. labelled by $f\in \C^{(1)}$, and its morphisms 
  are pairs $((\gamma,\chi),f)$ with source $f$ and target  $(\gamma,\chi)\act f$. Composition in 
  the groupoid of morphisms is determined by the morphism part of the composition functor in  ${\C \wquot \G}$
  (\ref{eq:explicitcomp}).

Since $(\widetilde{\C \wquot \G})^{(0)}$ and $(\widetilde{\C \wquot \G})^{(1)}$ are in fact groupoids, and all structure maps, being functors,
are also morphisms of $\cat{Gpd}$, we have that $\widetilde{\C \wquot \G}$ is a category internal to  $\cat{Gpd}$.
\end{proof}

We would like to emphasize again, that in the transposed perspective of $\widetilde{\C \wquot \G}$, both the category of objects and the category of morphisms are
themselves transformation groupoids in the usual sense, associated to group actions. 

We are now ready to combine the perspectives given by $\C \wquot \G$ and
its transpose to express the whole local symmetry structure implied in the
action of $\G$ on $\C$ in a balanced way.  This
balanced viewpoint is expressed in our definition below of the transformation
double category $\C \wquot \G$ (abusing notation somewhat by keeping the same name
as for the internal category). We recall from subsection \ref{sec:doublegroupoid} 
the notation $O$ for the set of objects, $\cal H$ and $\cal V$ for the sets of
horizontal and vertical morphisms, and $S$ for the set of squares. By the preceding discussion, the
horizontal category is $(\C \wquot \G)^{(0)}=\C$ (Def. \ref{def:internalC//G}), the vertical
category is $(\widetilde{\C \wquot \G})^{(0)}=\C^{(0)} \wquot \G^{(0)}$ (Thm. \ref{thm:transposetransform}), 
and for squares, the
horizontal composition of squares in $S$ is the morphism map of $\circ$, while vertical
composition  of squares in $S$ is the morphism map of $\widetilde{\circ}$.  

\begin{definition}\label{def:action-double-gpd}
  Given an action of a 2-group $\G$ on a category $\C$, in terms of
  $\Phi$ or $\hat{\Phi}$ of definitions \ref{def:2grp_action} and
  \ref{def:2grp_action-Phi-hat}, the transformation double category
  $\C \wquot \G$ is given by:
\begin{itemize}
\item \textbf{Objects}: objects $x \in O = \C^{(0)}$ 
\item \textbf{Horizontal Category}: $(\C \wquot \G)^{(0)} = \C$, with morphisms
  $x\stackrel{f}{\rightarrow}y \in H = \C^{(1)}$ composed as usual
\item \textbf{Vertical Category}: $(\widetilde{\C \wquot \G})^{(0)} = \C^{(0)} \wquot \G^{(0)}$, with
  morphisms displayed as
  $x\stackrel{(\gamma,x)}{\longrightarrow}\gamma\act x \in V =
  (\C^{(0)} \wquot \G^{(0)})^{(1)}$ with source and target as shown and
  composed by $(\gamma', \gamma\act x)\circ (\gamma,x)=
  (\gamma'\gamma,x)$

\item \textbf{Squares}: $S = (\C \wquot \G)^{(1)} = (\widetilde{\C \wquot \G})^{(1)} = \G^{(1)} \times \C^{(1)}$,
    consisting of pairs of morphisms from $\G$ and $\C$, with elements
    of $S$ denoted by $\squaremor{ (\gamma, \chi), f }$, displayed as
  \begin{equation}
    \xymatrix@C=7.5pc@R=4pc {
      x \ar[r]^{f} \ar[d]_{(\gamma, x)} \drtwocell<\omit>{\omit *+[F]{(\gamma,\chi),f}}  &  y \ar[d]^{(\partial(\chi)\gamma,y)}  \\
      {\gamma \act x} \ar[r]_{(\gamma,\chi)\act f}  & {(\partial(\chi)\gamma)\act y}
    }
  \label{eq:squaredef}   
  \end{equation}
  with horizontal and vertical source and target as shown in the
  display. Horizontal and vertical composition (equivalent to
  pasting of diagrams of the form (\ref{eq:squaredef})) is given
  by:
  \begin{equation}\label{eq:squarehorizcomp}
    \squaremor{(\gamma_2, \chi_2),g } 
    \, \circ_h \, \squaremor{ (\gamma_1, \chi_1),f } 
    \, = \, \squaremor{ (\gamma_1, \chi_2\chi_1),g\circ f }
  \end{equation}
  where $\gamma_2 = \partial(\chi_1)\gamma_1$, and:
  \begin{equation}\label{eq:squarevertcomp}
    \squaremor{ (\gamma_1, \chi_1),(\gamma_3,\chi_2)\act f } 
    \, \circ_v \, \squaremor{ (\gamma_3, \chi_2),f } 
    = \squaremor{ (\gamma_1\gamma_3, \chi_1(\gamma_1\rhd\chi_2)),f }
  \end{equation}
  where the underlying squares in ${\cal D}(\G)$ are given in (\ref{eq:F2squares}).

\end{itemize}
\label{def:trans-dbl-cat}
\end{definition}

\begin{theorem}
  ${\C \wquot \G}$  is a well-defined double category.
\label{thm:trans-dbl-cat}
\end{theorem}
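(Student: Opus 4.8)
The plan is to recognize that Definition~\ref{def:trans-dbl-cat} merely re-packages the $\cat{Cat}$-category $\C \wquot \G$ of Definition~\ref{def:internalC//G}, together with its transpose from Theorem~\ref{thm:transposetransform}, in the manifestly symmetric language of double categories recalled in Section~\ref{sec:doublegroupoid}. Since a double category in that symmetric sense is, under the equivalence quoted there, precisely a category internal to $\cat{Cat}$, it will be enough to match the two packagings ingredient by ingredient and then invoke Lemmas~\ref{lem:transf-gpd} and~\ref{lemma:transposedoublecat}; no genuinely new computation should be required.

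First I would set up the dictionary. The object set $O$ is $\C^{(0)}$, the objects of $(\C\wquot\G)^{(0)}=\C$. The horizontal category is $(\C\wquot\G)^{(0)}=\C$, a category by hypothesis. The vertical category is $(\widetilde{\C\wquot\G})^{(0)}$, which Theorem~\ref{thm:transposetransform} identifies with the ordinary transformation groupoid $\C^{(0)}\wquot\G^{(0)}$, a well-defined groupoid by Definition~\ref{def:transfngroupoid}. The set of squares $S$ is the morphism set of the category $(\C\wquot\G)^{(1)}=\G\times\C$. The category of squares under $\circ_h$ is then literally $\G\times\C$: its objects are the pairs $(\gamma,x)\in V$, its morphisms the pairs $((\gamma,\chi),f)\in S$, whose horizontal source and target are read off componentwise from~(\ref{cm3}),~(\ref{cm4}) and the source/target of $\C$ exactly as in the display~(\ref{eq:squaredef}), and whose horizontal composite~(\ref{eq:squarehorizcomp}) is precisely componentwise composition in $\G\times\C$, using~(\ref{cm5}) in the $\G$-factor. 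The category of squares under $\circ_v$ is $(\widetilde{\C\wquot\G})^{(1)}$, which Theorem~\ref{thm:transposetransform} identifies with the transformation groupoid $\C^{(1)}\wquot\G^{(1)}$; here the vertical source and target of a square are the morphism maps of $s=\pi_{\C}$ and $t=\hat\Phi$, namely $f$ and $(\gamma,\chi)\act f$, and the vertical composite~(\ref{eq:squarevertcomp}) is exactly the morphism map of the composition functor $\circ$ of $\C\wquot\G$, i.e.\ equation~(\ref{eq:explicitcomp}).

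With this dictionary, each double-category axiom becomes a fact already in hand. Associativity and the unit laws for $\circ_h$ hold because $\G\times\C$ is a category. Associativity and the unit laws for $\circ_v$ are the internal associativity and unit axioms of the groupoid $\C\wquot\G$ read on morphism maps, hence follow from Lemma~\ref{lem:transf-gpd}. The compatibility conditions linking the two directions — that $\widetilde s,\widetilde t,\widetilde e,\widetilde\circ$ are functors, equivalently that horizontal sources, targets, identities and composites commute with the vertical ones — are exactly the statements that $s,t,e,\circ$ of $\C\wquot\G$ are functors, which is part of Lemma~\ref{lem:transf-gpd} and is re-traced in Lemma~\ref{lemma:transposedoublecat}. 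In particular the interchange law for squares is the functoriality of the composition functor $\circ$, which in the proof of Lemma~\ref{lem:transf-gpd} was reduced to the interchange law in ${\cal D}(\G)$ for the array~(\ref{diag:intlaw}).

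The one point demanding care — and essentially the only obstacle — is bookkeeping: fixing once and for all which of the two category structures on $S$ is ``horizontal'' and which ``vertical'', and verifying that the source/target data exhibited in~(\ref{eq:squaredef}) is the one forced by the object- and morphism-maps of the structure functors of $\C\wquot\G$ and $\widetilde{\C\wquot\G}$ (the horizontal source/target of $((\gamma,\chi),f)$ being its source/target $(\gamma,x)$, $(\partial(\chi)\gamma,y)$ as a morphism of $\G\times\C$, the vertical ones being $f$ and $(\gamma,\chi)\act f$). Once these conventions are pinned down, the theorem follows immediately from Lemmas~\ref{lem:transf-gpd} and~\ref{lemma:transposedoublecat} and Theorem~\ref{thm:transposetransform}, with no further calculation.
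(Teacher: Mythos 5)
Your proposal is correct and takes essentially the same route as the paper: its proof likewise reduces every double-category axiom to the already-established properties of the internal categories $\C \wquot \G$ and $\widetilde{\C \wquot \G}$, spelling out explicitly only the two source/target compatibility checks (functoriality of $\hat{\Phi}$ and a crossed-module identity for $\partial$) and deriving the interchange law from the functoriality of the composition functor proved in Lemma~\ref{lem:transf-gpd}. Your dictionary between the symmetric presentation and the internal one makes the same reduction explicit.
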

\begin{proof}
  This follows from general considerations, but can also be checked by using the properties of the internal categories 
  ${\C \wquot \G}$ and $\widetilde{\C \wquot \G}$, e.g. the vertical target of the
  vertical composition of squares is well-defined due to (\ref{eq:targetcomposition}). We note that
  the vertical target of
  the horizontal composition of squares, equals the composition of the
  vertical targets of the two squares, i.e.
  \begin{equation}
    (\gamma_1, \chi_2\chi_1)\act (g\circ f) = ((\gamma_2, \chi_2)\act g)\circ ((\gamma_1, \chi_1)\act f)
  \end{equation}
  by the functoriality of $\hat{\Phi}$ in (\ref{eq:Phihat-func1}). Likewise the horizontal target of
  the vertical composition of squares equals the composition of the
  horizontal targets of the two squares, i.e.
  \begin{equation}
    (\partial(\chi_1)\gamma_1\partial(\chi_2)\gamma_3,y)= (\partial(\chi_1(\gamma_1\rhd \chi_2))\gamma_1\gamma_3,y).
  \end{equation}
  This follows in a straightforward fashion from crossed module properties.

  Associativity of the horizontal and vertical composition of squares is an immediate consequence of the associativity
  of the composition functors in ${\C \wquot \G}$ and $\widetilde{\C \wquot \G}$.
   Finally the interchange
  law for horizontal and vertical composition of squares corresponds to the proof of functoriality of the composition
  functor in ${\C \wquot \G}$ (Theorem \ref{thm:transf-gpd}) and reduces to stating the interchange law for four squares 
  (\ref{diag:intlaw}) in ${\cal D}(\G)$. 
\end{proof}

\vskip 0.2 cm

\begin{remark}
If we display the squares of the double
  category as follows:
  \begin{equation}   
    \xymatrix@C=7.5pc@R=4pc {
      x \ar@{-->}[r]^{f} \ar@{.>}[d]_{(\gamma, x)} \ar@{} [dr]|{((\gamma,\chi),f)}  &  y \ar@{.>}[d]^{(\partial(\chi)\gamma,y)}  \\
      {\gamma \act x} \ar@{-->}[r]_{(\gamma,\chi)\act f}  & {(\partial(\chi)\gamma)\act y}
    } 
  \end{equation}
with horizontal arrows dashed and vertical arrows dotted, this can be used to illuminate the two internal category perspectives for the double category.
  
From the perspective of the internal category ${\C \wquot \G}$ of
  Definition \ref{def:internalC//G}, these squares of the double
  category 
  display the action of the source and target
  functors $s,\, t$ on objects $(\gamma,x)$,
  $(\partial(\chi)\gamma,y)$, and morphisms $((\gamma,\chi),f)$. The 
  dashed arrows denote the image of the square under source and target
  functors, while the dotted arrows denote its source and target
  internal to the category of morphisms. 

Similarly, from the
  perspective of the internal category $\widetilde{\C \wquot \G}$ of
  Theorem \ref{thm:transposetransform}, these squares 
  display the action of the source and target
  functors $\widetilde{s},\, \widetilde{t}$ on objects $f$,
  $(\gamma,\chi)\act f$, and morphisms
  $((\gamma,\chi),f)$. Now the 
  dotted arrows denote the image of the square under source and target
  functors, while the dashed arrows denote its source and target
  internal to the category of morphisms. 

The squares of the double category superimpose the two
  perspectives giving a balanced view, which makes both internal
  structures transparent at the same time.
\end{remark}

\subsection{Structure of the Transformation Double Category}\label{sec:struc-trans-double-cat}

Next, we will consider some consequences of the construction of $\C
\wquot \G$. As constructed, the main information which can be read
directly from $\C \wquot \G$, viewed as an internal category in $\cat{Cat}$, is precisely the target map, which
is just the action $\hat{\Phi}$ itself. Some less obvious
consequences are more easily read from the transpose $\widetilde{\C
  \wquot \G}$, or from $\C \wquot \G$ seen as a double category.

\subsubsection{Relation to Ordinary Transformation Groupoids}

We have seen, in Theorem \ref{thm:transposetransform}, that the groupoid of objects and the groupoid of
morphisms of $\widetilde{\C \wquot \G}$ are both transformation
groupoids. The structure maps of $\widetilde{\C \wquot \G}$ relate
these groupoids to each other. Being functors, these maps consist of
two parts, which we can consider separately. Considering the
identity-inclusion functor tells us how several transformation
groupoids are nested inside one another, and will justify the notation
$\act$ we previously introduced for our 2-group action. First, note
what these groupoids are in crossed module notation.

\begin{corollary}\label{cor:transgpd-obj}
  If $\G$ is given by a crossed module $(G,H,\partial,\rhd)$, then
  $(\widetilde{\C \wquot \G})^{(0)} = \C^{(0)} \wquot G$, and $\C^{(1)} \wquot G
  \subset (\widetilde{\C \wquot \G})^{(1)} = \C^{(1)} \wquot (G \ltimes H)$. 
\end{corollary}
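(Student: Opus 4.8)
The plan is to read off both claims directly from Theorem \ref{thm:transposetransform} and Definition \ref{defCG}, unwinding the crossed-module descriptions of the relevant groups and actions. By Theorem \ref{thm:transposetransform}, $(\widetilde{\C \wquot \G})^{(0)}$ is the transformation groupoid $\C^{(0)} \wquot \G^{(0)}$, where $\G^{(0)}$ is the group of objects of the categorical group $\G$ acting on $\C^{(0)}$ via $\hat{\Phi}^{(0)}$. When $\G$ is presented by the crossed module $(G,H,\rhd,\partial)$, Definition \ref{defCG} identifies $\G^{(0)} = G$, so $(\widetilde{\C \wquot \G})^{(0)} = \C^{(0)} \wquot G$ immediately. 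Similarly, Theorem \ref{thm:transposetransform} gives $(\widetilde{\C \wquot \G})^{(1)} = \C^{(1)} \wquot \G^{(1)}$, and Definition \ref{defCG} identifies $\G^{(1)} \cong G \ltimes H$ as a group (with the multiplication (\ref{cm6})), so $(\widetilde{\C \wquot \G})^{(1)} = \C^{(1)} \wquot (G \ltimes H)$.

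The only substantive point is the inclusion $\C^{(1)} \wquot G \subset \C^{(1)} \wquot (G \ltimes H)$. First I would exhibit $G$ as a subgroup of $G \ltimes H$ via $g \mapsto (g, 1_H)$, which is a homomorphism by (\ref{cm6}) since $g_1 \rhd 1_H = 1_H$. Next I would check that the action of $G \ltimes H$ on $\C^{(1)}$ given by $\hat{\Phi}^{(1)}$ restricts, along this inclusion, to an honest $G$-action on $\C^{(1)}$: by Definition \ref{def:act-notation}, $(\gamma,1_H)\act f = \hat{\Phi}((\gamma,1_H),f) = \Phi_\gamma(f) = \gamma \act f$, which is exactly the morphism-level action of $G$ on $\C^{(1)}$ used to form $\C^{(1)} \wquot G$. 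The transformation groupoid construction (Definition \ref{def:transfngroupoid}) is functorial in the pair (group, action) in the evident sense: a homomorphism of groups intertwining the actions induces a functor on transformation groupoids which is the identity on objects. Applying this to $G \hookrightarrow G \ltimes H$ and the compatible actions yields an inclusion functor $\C^{(1)} \wquot G \hookrightarrow \C^{(1)} \wquot (G\ltimes H)$; it is injective on objects (identity on $\C^{(1)}$) and injective on morphisms since $g \mapsto (g,1_H)$ is. Finally one checks this inclusion respects composition: in $\C^{(1)} \wquot G$ one has $(g',\gamma\act f)\circ(g,f) = (g'g,f)$, which maps to $((g',1_H),\gamma\act f)\circ((g,1_H),f)$, and by (\ref{eq:squarevertcomp}) (or equivalently (\ref{eq:explicitcomp})) this equals $((g'g,1_H(g'\rhd 1_H)),f) = ((g'g,1_H),f)$, the image of $(g'g,f)$ — so the inclusion is a functor as claimed.

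I do not expect any real obstacle here; the statement is essentially a bookkeeping corollary of the theorem plus the crossed-module dictionary. The one place requiring a moment's care is confirming that the subgroup $\{(g,1_H)\} \cong G$ is normal-free of any twisting, i.e. that the semidirect-product multiplication (\ref{cm6}) genuinely restricts to the multiplication of $G$ on this subgroup, which it does precisely because the $G$-action on $H$ fixes $1_H$. One might also remark, for completeness, that the inclusion $\C^{(1)} \wquot G \hookrightarrow \C^{(1)} \wquot (G \ltimes H)$ is compatible with the structure functors $\widetilde{s}, \widetilde{t}$ down to $\C^{(0)} \wquot G$, since on objects these are just the source and target maps of $\C$ (Theorem \ref{thm:transposetransform}) and on morphisms they send $(g,1_H)$ to $g$; but this is not needed for the bare statement of the corollary.
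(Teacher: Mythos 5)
Your proposal is correct and follows essentially the same route as the paper: both read the identifications $(\widetilde{\C \wquot \G})^{(0)} = \C^{(0)} \wquot G$ and $(\widetilde{\C \wquot \G})^{(1)} = \C^{(1)} \wquot (G \ltimes H)$ off Theorem \ref{thm:transposetransform} together with the crossed-module dictionary $\G^{(0)}=G$, $\G^{(1)}\cong G\ltimes H$, and both obtain the inclusion from the embedding $g \mapsto (g,1_H)$ of $G$ into $G\ltimes H$ with the compatible restricted action. Your version merely spells out the functoriality and composition checks that the paper dispatches with ``follow immediately from the identity-inclusion functor'' (modulo a harmless typo where you write $\gamma\act f$ for $g\act f$ in the composition in $\C^{(1)}\wquot G$).
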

\begin{proof}
  In the construction of a categorical group from a crossed module,
  the group $G \ltimes H$ is the group of morphisms, and the group $G$
  of objects occurs as the subgroup of elements of the form $(\gamma,1_H)$. The groupoid
  inclusion then follows immediately from the identity-inclusion
  functor in Theorem \ref{thm:transposetransform}.
\end{proof}

The fact that the structure maps are functors means that, from an
``external'' point of view (that is, if we look at underlying sets),
$\C \wquot \G$ combines three closely related group actions in the
standard sense. These three group actions have all been denoted by the
symbol $\act$ introduced in Definition \ref{def:act-notation}. This
notation can now be justified because they are all restrictions of the
action of $\G^{(1)}$ on $\C^{(1)}$, along the identity inclusion maps
for $\G$ and $\C$. In particular, the associated action groupoids are
also related by these restrictions.  More precisely, we have:

\begin{corollary}\label{cor:transgpds-3actions}
  The identity-inclusion functor $\widetilde{e}$ factors into two
  inclusions:
  \begin{equation}
    (\widetilde{\C \wquot \G})^{(0)} \subset \C^{(1)} \wquot \G^{(0)} \subset (\widetilde{\C \wquot \G})^{(1)}
  \end{equation}
  where the first inclusion is given by the identity inclusion of
  $\C$, and the second by the identity inclusion of $\G$.
  \end{corollary}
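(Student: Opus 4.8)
The plan is to write out the three transformation groupoids in crossed-module notation, exhibit the two inclusion functors $j_1$ and $j_2$ explicitly, and then check that $j_2 \circ j_1 = \widetilde{e}$. From Theorem \ref{thm:transposetransform} and Corollary \ref{cor:transgpd-obj}, $(\widetilde{\C \wquot \G})^{(0)} = \C^{(0)} \wquot G$ has morphisms $(\gamma,x)$ with source $x$, target $\gamma \act x$, and composition $(\gamma',\gamma\act x)\circ(\gamma,x) = (\gamma'\gamma,x)$; the intermediate term $\C^{(1)} \wquot \G^{(0)} = \C^{(1)} \wquot G$ is the ordinary transformation groupoid of the $G$-action $\gamma \act f = \Phi_\gamma(f)$ on $\C^{(1)}$ (well defined since $\Phi_{\gamma_1}\circ\Phi_{\gamma_3} = \Phi_{\gamma_1\gamma_3}$ by F2); and $(\widetilde{\C \wquot \G})^{(1)} = \C^{(1)} \wquot (G\ltimes H)$ has morphisms $((\gamma,\chi),f)$. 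I would also recall from the proof of Theorem \ref{thm:transposetransform} that $\widetilde{e}$ has object map $x \mapsto Id_x$ and morphism map $(\gamma,x) \mapsto ((\gamma,1_H), Id_x)$.

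Next I would define $j_1 : \C^{(0)} \wquot G \ra \C^{(1)} \wquot G$ by $x \mapsto Id_x$ on objects and $(\gamma,x)\mapsto(\gamma,Id_x)$ on morphisms, and $j_2 : \C^{(1)} \wquot G \ra \C^{(1)} \wquot (G\ltimes H)$ by $f \mapsto f$ on objects and $(\gamma,f) \mapsto ((\gamma,1_H),f)$ on morphisms. That $j_1$ is a well-defined, faithful, injective-on-objects functor amounts to the $G$-equivariance of the identity inclusion $e$ of $\C$: the target of $(\gamma,Id_x)$ is $\gamma\act Id_x = \Phi_\gamma(Id_x) = Id_{\gamma\act x}$ since $\Phi_\gamma$ is a functor, while composition in both groupoids is multiplication in $G$ in the first slot. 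That $j_2$ is such a functor amounts to the fact that the $G$-action $\gamma\act f$ defining $\C^{(1)} \wquot G$ is the restriction, along the homomorphism $\gamma\mapsto(\gamma,1_H) : G\hookrightarrow G\ltimes H$, of the $(G\ltimes H)$-action defining $(\widetilde{\C \wquot \G})^{(1)}$, i.e.\ $(\gamma,1_H)\act f = \gamma\act f$; this is precisely the computation in Definition \ref{def:act-notation} together with the F1 identity $\Phi_{(\gamma,1_H)} = Id$, namely $(\gamma,1_H)\act f = \Phi_{(\gamma,1_H)}(y)\circ(\gamma\act f) = \gamma\act f$. Since $\gamma\mapsto(\gamma,1_H)$ is a group homomorphism, $j_2$ respects sources, targets and composition. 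These are the senses in which the first inclusion ``is the identity inclusion of $\C$'' and the second ``is the identity inclusion of $\G$''.

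Finally, $j_2\circ j_1$ carries $x\mapsto Id_x$ and $(\gamma,x)\mapsto(\gamma,Id_x)\mapsto((\gamma,1_H),Id_x)$, which is exactly $\widetilde{e}$ as recalled above, so $\widetilde{e}$ factors as claimed. The only real work is the two equivariance checks — that the $G$-action carries identities to identities up to the action, and that the $G$-action on $\C^{(1)}$ is the pullback of the $(G\ltimes H)$-action along $G\hookrightarrow G\ltimes H$ — and both are immediate consequences of functoriality of $\Phi_\gamma$ and of condition F1 in Definition \ref{def:2grp_action}, so no genuine obstacle arises; the statement is essentially a repackaging of facts already established.
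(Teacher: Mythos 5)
Your proposal is correct and follows essentially the same route as the paper: the first inclusion comes from the fact that $\Phi_\gamma$, being a functor, sends identities to identities (so $\C^{(0)}\subset\C^{(1)}$ is closed under the $G$-action), and the second comes from restricting the $G\ltimes H$-action on $\C^{(1)}$ along $\gamma\mapsto(\gamma,1_H)$, using the F1 unit condition $\Phi_{(\gamma,1_H)}={\rm id}$. Your version is if anything slightly more complete, since you explicitly verify that the composite $j_2\circ j_1$ agrees with the morphism map $(\gamma,x)\mapsto((\gamma,1_H),Id_x)$ of $\widetilde{e}$, a step the paper leaves implicit.
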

\begin{proof}
  The fact that objects of $\G$ give endofunctors of $\C$ means that
  the morphism maps $\Phi_\gamma^{(1)}$ determine an action of
  $\G^{(0)}$ on $\C^{(1)}$. Thus, objects of $\G$ act on both
  $\C^{(0)}$ and $\C^{(1)}$. By functoriality, the inclusion of
  objects of $\C$ as identity morphisms in $\C^{(1)}$ is compatible
  with the action. So the subset $\C^{(0)} \subset \C^{(1)}$ is closed
  under the action of G. The associated transformation groupoid has
  as its objects the morphisms $f \in \C$, and as morphisms pairs
  $(\gamma,f)$ which compose in the usual way. These correspond to the
  special squares $\squaremor{(\gamma,1_H),f}$. There is a full
  inclusion of transformation groupoids induced by the inclusion of
  their sets of objects $\C^{(0)} \subset \C^{(1)}$.
  
  The group $\G^{(1)} \cong G \ltimes H$ also acts on $\C^{(1)}$. This
  action is given by $\hat{\Phi}$, and is related to the action of $G$
  on $\C^{(1)}$ by the identity inclusion map for $\G$, namely $G
  \subset G \ltimes H$. In particular, the action of $\G^{(1)}$ on
  $\C^{(1)}$ combines both natural transformations acting on objects,
  and functors acting on morphisms. To see this, note that the squares
  in the double category $\C \wquot \G$ occur by construction within a
  commuting cube which shows the naturality square associated to $f$,
  or equivalently the two ways of expressing $(\gamma,\chi)\act f$ in
  terms of $\Phi$ - see (\ref{eq:PhihatfromPhi}):
  \begin{equation}\label{eq:triplecat-3cell}
    \xymatrix{
      x \ar@{-->}[drrr] \ar[ddd]_{(\gamma,x)}="00" \ar@{=}[dr] \ar[rr]^{f}  & & y \ar[ddd]^{(\gamma,y)}="10" \ar@{=}[dr] & \\
      & x \ar[ddd]_(.4){(\partial (\chi) \gamma,x)}="01" \ar[rr]_(.4){f}  & & y \ar[ddd]^{(\partial (\chi) \gamma,y)}="11"\\
      & & & \\
      \gamma \act x \ar@{-->}[drrr] \ar[dr]_{\Phi_{(\gamma,\chi)}(x)} \ar[rr]^(.6){\gamma \act f} & \uurtwocell<\omit>{\omit *+[F]{((\gamma,\chi),f)}} & \gamma \act y \ar[dr]^{\Phi_{(\gamma,\chi)}(y)} & \\
      & \partial (\chi)\gamma \act x \ar[rr]_{\partial (\chi)\gamma \act f} & & \partial (\chi)\gamma \act y
    }   
  \end{equation}

  So the action of morphisms of $\G$ on morphisms of $\C$ involves
  natural transformations at the source and target objects, as well as
  functors acting on the morphisms themselves. The outside faces of
  the cube (\ref{eq:triplecat-3cell}) are themselves special cases of
  squares in which one of these two parts is an identity. The case
  $\chi = 1_H$, so that $(\gamma,\chi) = Id_{\gamma}$ (for example, the
  front or rear faces of that cube) is precisely the second inclusion
  of the theorem.

  Restricting further to the image of the unit inclusion for $\C$, we
  have the case $f = Id_x$. In particular, $\gamma \act Id_x =
  Id_{\gamma \act x}$ is just a consequence of the fact that the
  action is functorial. (For this reason, it is immediate if we think
  of the action in terms of $\hat{\Phi}$.)

  The action by the subgroup $G$ gives a transformation groupoid with
  fewer morphisms than the action by the big group $G \ltimes H$ - but
  the former is strictly a sub-groupoid (by a non-full inclusion) of
  the latter.
\end{proof}

Notice that this factorization of $\widetilde{e}$ is given
by restricting to the image of the unit inclusions of $\C$ and then
$\G$, and not the other way around. There is no well-defined action of
$\G^{(1)}$ on $\C^{(0)}$. This can easily be seen, again by taking a
special case of a square of the form (\ref{eq:triplecat-3cell}) with
$f=Id_x$. Since $\gamma \act x$ and $\partial (\chi) \gamma \act x$ need
not be the same object, and even in the case they are,
$\Phi_{(\gamma,\chi)}(x)$ need not be an identity, the image of $e :
\C^{(0)} \ra \C^{(1)}$ is not fixed by the action of $\G^{(1)}$ on
$\C^{(1)}$.

\subsubsection{Image of the Composition Functor}

Next, consider the composition $\widetilde{\circ}$. It is somewhat
more complicated than $\circ$, and derives from the 2-group structure
of $\G$, just as composition does in transformation groupoids. A
little example calculation with this composite will show how 2-group
structure and an action must cohere.

Indeed, composition in $(\widetilde{\C \wquot \G})^{(0)}$ is exactly
the composition in the transformation groupoid $\C^{(0)} \wquot G$, by
Corollary \ref{cor:transgpd-obj}. Similarly, the composition in
$(\widetilde{\C \wquot \G})^{(1)}$ is the composition in $\C^{(1)}
\wquot \G^{(1)}$, which is determined by the group multiplication in
$\G^{(1)} \cong G \ltimes H$ (in the crossed-module representation),
together with the action on $\C^{(1)}$. So we can write this as:
\begin{equation}\label{eq:square-transcomp}
  \squaremor{ (\gamma', \chi'),(\gamma,\chi)\act f } 
  \, \widetilde{\circ} \, \squaremor{ (\gamma, \chi),f } 
  = \squaremor{ (\gamma'\gamma, \chi'(\gamma'\rhd\chi)),f }
\end{equation}
We know by construction and the above theorems that this makes a
well-defined double category. It is, however, not given directly by
the action $\Phi$ seen as a 2-functor. In that point of view, the
action of morphisms of $\G$ on morphisms of $\C$ is a consequence of
the naturality of $\Phi_{(\gamma,\chi)}$ for all $(\gamma,\chi) \in
\G^{(1)}$.

Writing the target of this composite concretely, it is:
\begin{equation}\label{eq:square-transcomp-target1}
  (\gamma'\gamma, \chi'(\gamma' \rhd \chi)) \act f
\end{equation}
However, we have concrete expressions for $(\gamma,\chi)\act f$,
shown in the cube (\ref{eq:triplecat-3cell}), namely:
\begin{equation}\label{eq:square-transcomp-target2}
  \begin{array}{ccccc}
    (\gamma,\chi) \act f & = & ( \partial (\chi) \gamma \act f ) & \circ & \Phi_{(\gamma,\chi)}(x) \\
  & = & \Phi_{(\gamma,\chi)}(y) & \circ & (\gamma \act f)
    \end{array}
\end{equation}
where composition is taken in $\C$. Recall that this is a consequence
of the naturality of the bottom square of (\ref{eq:triplecat-3cell})
when we apply $\Phi_{(\gamma,\chi)}$ at the morphism $f$.

Now, if we then take $(\gamma,\chi) \act f$ as the source of the new
square $\squaremor{ (\gamma', \chi'),(\gamma,\chi)\act f }$ to find
its target, we are applying $\Phi_{(\gamma',\chi')}$ at the morphism
$(\gamma,\chi) \act f$. To compute this explicitly, note that each
morphism in the commuting square at the bottom of
(\ref{eq:triplecat-3cell}) gives rise to another commuting square,
and these form four faces of a cube whose other two faces are the image of
the original commuting square under the functors $\Phi_{\gamma'}$ and
$\Phi_{\partial \chi' \gamma'}$ respectively:
\begin{equation}\label{eq:compositesquare-target}
  \xymatrix{
    \partial (\chi') \gamma' \gamma \act x \ar[ddd]_{\partial (\chi') \gamma' \act \Phi_{(\gamma,\chi)}(x)} \ar[rr]^{\partial (\chi')\gamma' \gamma \act f}  & & \partial (\chi' \gamma' \gamma) \act y \ar[ddd]^(.6){\partial (\chi') \gamma' \act \Phi_{(\gamma,\chi)}(y)}  & \\
    & \gamma' \gamma \act x \ar[ddd]_(.4){\gamma' \act \Phi_{(\gamma,\chi)}(x)} \ar[rr]_(.4){\gamma'\gamma \act f} \ar[ul]^{\Phi_{(\gamma',\chi')}(\gamma \act x)}  \ar@{-->}[ddr] & & \gamma' \gamma \act y \ar[ul]_{\Phi_{(\gamma',\chi')}(\gamma \act y)} \ar[ddd]^{\gamma' \act \Phi_{(\gamma,\chi)}(y)} \\
    & & & \\
    \partial (\chi') \gamma' \partial (\chi) \gamma \act x \ar[rr]^(.6){\partial (\chi') \gamma' \partial (\chi) \gamma \act f} & & \partial (\chi') \gamma' \partial (\chi) \gamma \act y & \\
    & \gamma' \partial (\chi)\gamma \act x \ar[ul]^{\Phi_{(\gamma',\chi')}(\partial (\chi)\gamma \act x)} \ar[rr]_{\gamma'\partial (\chi)\gamma \act f} & & \gamma'\partial (\chi)\gamma \act y \ar[ul]_{\Phi_{(\gamma',\chi')}(\partial (\chi) \gamma \act y)}
  }
\end{equation}
We have omitted the images under $\Phi_{\gamma'}$ and $\Phi_{(\partial
  \chi')\gamma'}$ of the original diagonal $(\gamma,\chi) \act f$ for
clarity. However, they are the diagonals on the front and back face of
this cube. Together with the corresponding edges between these faces,
they form a square whose diagonal is the target we want, just as
before.

The target morphism of the square $\squaremor{ (\gamma',
  \chi'),(\gamma,\chi)\act f }$, hence also of our composite square,
is therefore the dotted arrow across the diagonal of this cube.

The source object of this morphism is $\gamma' \gamma \act x$, and its
target object is $\partial (\chi')\gamma' \partial (\chi) \gamma \act
y$.

However, we already found that the composite square was $\squaremor{
  (\gamma'\gamma, \chi'(\gamma'\rhd\chi)),f }$. Computing the target
of this square directly from (\ref{eq:triplecat-3cell}) shows that its
target morphism begins at $\gamma' \gamma \act x$ and ends at
$(\partial (\chi' (\gamma' \rhd \chi) ) \gamma' \gamma \act x$. A
straightforward application of the crossed module axioms confirms that
indeed:
\begin{equation}\label{eq:crossmod-target}
  \partial (\chi')\gamma' \partial (\chi) \gamma = \partial (\chi' (\gamma' \rhd \chi) ) \gamma' \gamma
\end{equation}
as it must be.

Knowing that we are in a double category where composition is
well-defined tells us more. Just as the naturality square at the
bottom of (\ref{eq:triplecat-3cell}) gave us two expressions for
$(\gamma,\chi) \act f$, the cube of naturality squares
(\ref{eq:compositesquare-target}) gives us a total of six expressions
which equal the morphism
\begin{equation}
  ( \gamma' \gamma , \partial (\chi' (\gamma' \rhd \chi) ) ) \act f
\end{equation}
As before, each is a composite of morphisms in $\C$, most directly
written as:
\begin{equation}\label{eq:equivalent-composites}
  \begin{array}{ccccccc}
    & & \Phi_{(\gamma',\chi')}(\partial (\chi) \gamma \act y) & \circ & (\gamma ' \act \Phi_{(\gamma,\chi)}(y)) & \circ & (\gamma' \gamma \act f) \\
    &=& (\partial (\chi') \gamma') \act \Phi_{(\gamma,\chi)}(y)) & \circ & \Phi_{(\gamma',\chi')}(\gamma \act y) & \circ & (\gamma' \gamma \act f ) \\
    &=& \Phi_{(\gamma',\chi')}(\partial (\chi)\gamma \act y) & \circ & (\gamma' \partial (\chi) \gamma \act f) & \circ & (\gamma' \act \Phi_{(\gamma,\chi)}(x))\\
    &=&  (\partial (\chi')\gamma' \partial (\chi) \gamma \act f) & \circ & \Phi_{(\gamma',\chi')}(\partial (\chi)\gamma \act x) & \circ & (\gamma' \act \Phi_{(\gamma,\chi)}(x)) \\
    &=&  (\partial (\chi')\gamma' \act \Phi_{(\gamma,\chi)}(y)) & \circ & (\partial (\chi') \gamma' \gamma \act f ) & \circ & \Phi_{(\gamma',\chi')}(\gamma \act x) \\
    &=& (\partial (\chi')\gamma' \partial (\chi) \gamma \act f) & \circ & (\partial (\chi')\gamma' \act \Phi_{(\gamma,\chi)}(x)) & \circ & \Phi_{(\gamma',\chi')}(\gamma \act x)
  \end{array}
\end{equation}
The fact that these are all equal is a consequence of the fact that
$\Phi$ is an action, hence every square in the cube is either a
naturality square or the image of a naturality square under a functor,
and therefore every square commutes. Some of the terms may also be
written differently in light of (\ref{eq:crossmod-target}) and similar
applications of crossed-module axioms.

In general, a composite of $n$ squares will give rise to many
different equivalent expressions for the target morphism, in the same
way. In particular, one can easily see by induction that one obtains
an $(n+1)$ cube by repeatedly applying $\Phi$ at each step. Counting
paths around such a cube, one finds $(n+1)!$ possible composites in
$\C$ which amount to the same morphism.

Clearly, using the 2-group composition as in
(\ref{eq:square-transcomp}) to find the target
(\ref{eq:square-transcomp-target1}) is more direct, and since we
know the transpose of $\C \wquot \G$ is a double category, we can use
the 2-group action to find the target directly.

\subsubsection{Horizontal and Vertical 2-Categories}

Finally, we turn to the double category point of view on $\C \wquot
\G$. When speaking of double categories, one often refers to various
smaller structures which arise from specialization. We already know
that the horizontal and vertical categories are, respectively, $\C$
itself, and $\C^{(0)} \wquot \G^{(0)}$. Moreover, there are also two
categories which share the same collection of morphisms (the squares
of the double category), but have different sets of objects (namely,
the horizontal and vertical morphisms). Again, we have already seen
that these are, respectively, $\G \times \C$ and $\C^{(1)} \wquot
\G^{(1)}$.

However, one also speaks of the horizontal and vertical 2-categories
of a double category. These are the last aspect of the structure of
$\C \wquot \G$ we will consider.

\begin{definition} The \textit{horizontal 2-category} $H({\cal D})$ of a double
  category ${\cal D}$ has the same objects and 1-morphisms as its
  horizontal category. The 2-morphisms consist only of the squares
  of ${\cal D}$ for which the vertical morphisms on the boundary are
  identities. The \textit{vertical 2-category}  $V({\cal D})$ is defined similarly
  (i.e. it is the horizontal 2-category of $\widetilde{\cal D}$).
\end{definition}

It is straightforward to check that these form 2-categories, but since
it is a standard fact, we will not verify all the details
here. However, we will consider the horizontal and vertical
2-categories for $\C \wquot \G$. It should be clear from what we have
just said that these will amount to $\C$ and $\C^{(0)} \wquot
\G^{(0)}$, extended by adding certain 2-morphisms.

\begin{theorem}\label{thm:horiztwocat}
  The horizontal 2-category $H(\C \wquot \G)$ of the transformation
  double category associated to a 2-group action has the same objects
  and 1-morphisms as $\C$. Given objects $x, y \in \C$, and a pair of
  morphisms $f,f' : x \ra y$, the 2-morphisms in $Hom(f, f')$ are
  labeled by $(1_G,\chi) : 1_G \Ra 1_G$ such that
  \begin{equation}\label{eq:horiz2mortarget}
    f' = f \circ \Phi_{(1_G,\chi)}(x)
  \end{equation}
  These 2-morphisms compose horizontally and vertically just as the
  endomorphisms of $1_G$ in $\G$.
\end{theorem}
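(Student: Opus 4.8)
The plan is to unwind the definition of the horizontal $2$-category directly against the explicit description of $\C\wquot\G$ given in Definition \ref{def:trans-dbl-cat}; there is no real obstacle here, only a short bookkeeping argument. The claim about objects and $1$-morphisms is immediate, since $H(\C\wquot\G)$ is by definition equipped with the same objects and $1$-morphisms as the horizontal category of $\C\wquot\G$, which is $\C$.

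Next I would pin down the $2$-morphisms. These are, by definition, the squares $\squaremor{(\gamma,\chi),f}$ both of whose vertical boundary edges are identities. Reading off the display (\ref{eq:squaredef}), those edges are $(\gamma,x)$ and $(\partial(\chi)\gamma,y)$, and the identity on an object $z$ in the vertical category $\C^{(0)}\wquot\G^{(0)}$ is $(1_G,z)$; hence admissibility forces $\gamma=1_G$ together with $\partial(\chi)\gamma=1_G$, i.e. $\gamma=1_G$ and $\partial(\chi)=1_G$. In the categorical-group description a morphism $(1_G,\chi)$ has source $1_G$ and target $\partial(\chi)$, so the admissible labels are exactly the $2$-endomorphisms $(1_G,\chi):1_G\Ra 1_G$ of the identity $1_G$ in $\G$. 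The horizontal source of such a square is $f$, and its horizontal target is $(1_G,\chi)\act f$; substituting $\partial(\chi)=1_G$ into the formula of Definition \ref{def:act-notation} (so that $\partial(\chi)\gamma\act f = 1_G\act f = f$) yields $(1_G,\chi)\act f = f\circ\Phi_{(1_G,\chi)}(x)$, which is exactly (\ref{eq:horiz2mortarget}); by the naturality square (\ref{eq:Phinatural}) it also equals $\Phi_{(1_G,\chi)}(y)\circ f$.

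Finally I would handle the two compositions by specialising the composition formulas of the double category. Setting $\gamma_1=\gamma_3=1_G$ and $\partial(\chi_1)=\partial(\chi_2)=1_G$ in (\ref{eq:squarevertcomp}) and (\ref{eq:squarehorizcomp}), and using that $1_G\rhd(-)$ acts trivially, the vertical composite is $\squaremor{(1_G,\chi_1\chi_2),f}$ and the horizontal composite is $\squaremor{(1_G,\chi_2\chi_1),g\circ f}$; both are again admissible because $\partial$ is a homomorphism, so $\partial(\chi_1\chi_2)=\partial(\chi_2\chi_1)=1_G$. On the labels, therefore, both $\circ_h$ and $\circ_v$ are the multiplication of $\ker\partial$. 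On the $\G$ side, horizontal composition of $2$-endomorphisms of $1_G$ is $(1_G,\chi)\circ(1_G,\chi')=(1_G,\chi\chi')$ by (\ref{cm6}) and vertical composition is $(1_G,\chi)\cdot(1_G,\chi')=(1_G,\chi\chi')$ by (\ref{cm5}); by the Eckmann-Hilton argument recalled in Section \ref{sec:2group-2cat} these agree and make $Hom_{\G}(1_G,1_G)=\ker\partial$ an abelian group. That abelianness is precisely what reconciles the order $\chi_2\chi_1$ produced by $\circ_h$ with the products just written, and is the one place where a moment's care is needed; with it in hand, the $2$-morphisms of $H(\C\wquot\G)$ compose in both directions exactly as the endomorphisms of $1_G$ in $\G$.
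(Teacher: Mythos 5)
Your proof is correct and follows essentially the same route as the paper's: read off the constraints $\gamma=1_G$ and $\partial(\chi)=1_G$ from the identity vertical edges of (\ref{eq:squaredef}), compute the target via Definition \ref{def:act-notation}, and specialise the composition formulas (\ref{eq:squarehorizcomp}) and (\ref{eq:squarevertcomp}). Your extra observation that the $\chi_2\chi_1$ versus $\chi_1\chi_2$ discrepancy between the two compositions is resolved by the Eckmann--Hilton commutativity of $\ker\partial$ is a worthwhile point of care that the paper's proof passes over silently.
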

\begin{proof}
  The objects and 1-morphisms are those of the horizontal category,
  which is just $\C$ by definition.

  The 2-morphisms correspond to squares of the form
  (\ref{eq:squaredef}) for which the vertical morphisms are
  identities. These must be of the form $\squaremor{(1_G,\chi),f}$,
  since the horizontal source of $\squaremor{(\gamma,\chi),f}$ is $(\gamma,x) = Id_x$, and thus $\gamma
  = 1_G$. Moreover, since the target $(\partial(\chi) \gamma, y) =
  (\partial(\chi),y) = Id_y$, we must also have that $\partial(\chi) =
  1_G$. So only 2-endomorphisms of the identity of $\G$ enter as
  2-morphisms.

  The vertical target is then the horizontal morphism 
  \begin{eqnarray}
    (1_G,\chi) \act f & = & (\partial(\chi)\act f) \circ \Phi_{(1_G,\chi)}(x) \\
    \nonumber & = & f \circ \Phi_{(1_G,\chi)}(x)
  \end{eqnarray}

  When $\gamma_1$ and $\gamma_2$ in (\ref{eq:squarehorizcomp}) and (\ref{eq:squarevertcomp}) are
both equal to $1_G$, these horizontal and vertical composition rules
   just amount
    to multiplication in $H$, that is, the composition of
    2-endomorphisms of the identity.
\end{proof}

Intuitively, this says that the 2-morphisms with source $f$ 
in $H(\C \wquot \G)$ correspond
exactly to the endomorphisms $(1_G,\chi)$ of $1_G$ in $\G$. In the
language of crossed modules, they correspond to $\chi \in
ker(\partial)$. The target of such a 2-morphism is found by precomposing $f$ with the
endomorphisms of $x$ given by the natural transformations
$\Phi_{(1_G,\chi)}$. Clearly, these labels $\chi$ for 2-morphisms
sourced at $f$ will be the same for all $f$. Depending on the precise
details of the action $\Phi$, all such 2-morphisms might be
endomorphisms of $f$ itself, or the targets might all be different
from $f$ and from each other. This is the type of information about
the action $\Phi$ which is captured by $H(\C \wquot \G)$.

\begin{theorem}\label{thm:verttwocat}
  The vertical 2-category $V(\C \wquot \G)$ of the transformation double
  category associated to a 2-group action has the same objects and
  1-morphisms as $\C^{(0)} \wquot \G^{(0)}$. Given an object $x \in
  \C$ and $\gamma,\gamma' \in G$ such that $\gamma \act x = \gamma'
  \act x$, there will be a 2-morphism from $(\gamma,x)$ to
  $(\gamma',x)$ for each $\chi$ such that $\Phi_{(\gamma,\chi)}(x) =
  Id_{\gamma \act x}$ and $\gamma'= \partial(\chi)\gamma$.
\end{theorem}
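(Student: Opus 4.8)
The plan is to run the argument in parallel with the proof of Theorem~\ref{thm:horiztwocat}, using that, by definition, $V(\C \wquot \G)$ is the horizontal 2-category of the transpose $\widetilde{\C \wquot \G}$. Consequently its objects and 1-morphisms are those of the horizontal category of $\widetilde{\C \wquot \G}$, i.e.\ of the vertical category of $\C \wquot \G$; by Definition~\ref{def:trans-dbl-cat} (see also Theorem~\ref{thm:transposetransform}) this vertical category is $\C^{(0)} \wquot \G^{(0)}$, which gives the first assertion at once.

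The substantive step is to identify the 2-morphisms, which are precisely the squares $\squaremor{(\gamma,\chi),f}$ of the double category (see (\ref{eq:squaredef})) whose \emph{horizontal} edges are identities. Requiring the horizontal source $f$ to be an identity forces $f = Id_x$ for some $x \in \C^{(0)}$, so in particular its codomain $y$ equals $x$. Its horizontal target is then $(\gamma,\chi)\act Id_x$, and by the formula $(\gamma,\chi)\act f = (\partial(\chi)\gamma \act f)\circ \Phi_{(\gamma,\chi)}(x)$ of Definition~\ref{def:act-notation}, together with functoriality of the action (so $\partial(\chi)\gamma \act Id_x = Id_{\partial(\chi)\gamma \act x}$), this collapses to $\Phi_{(\gamma,\chi)}(x)$. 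Hence the square lies in $V(\C \wquot \G)$ exactly when $\Phi_{(\gamma,\chi)}(x) = Id_{\gamma\act x}$; and since $\Phi_{(\gamma,\chi)}(x)$ runs from $\gamma \act x$ to $\partial(\chi)\gamma \act x$, this condition automatically forces $\gamma \act x = \partial(\chi)\gamma \act x$, i.e.\ $\gamma \act x = \gamma' \act x$ with $\gamma' := \partial(\chi)\gamma$. Reading off the vertical source and target of $\squaremor{(\gamma,\chi),Id_x}$ from (\ref{eq:squaredef}), namely $(\gamma,x)$ and $(\partial(\chi)\gamma,x) = (\gamma',x)$, exhibits it as a 2-morphism from $(\gamma,x)$ to $(\gamma',x)$ labelled by $\chi$. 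Since distinct $\chi$ give distinct squares, this is a bijection, which is exactly the claim.

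It then remains to check that these special squares are closed under the two square-compositions, so that they genuinely form the 2-cells of the sub-2-category $V(\C \wquot \G)$; granting this, associativity, the unit laws and the interchange law are inherited verbatim from the double category $\C \wquot \G$ (cf.\ Lemma~\ref{lemma:transposedoublecat} and the fact that $\C \wquot \G$ is a double category), and one can also record how 2-morphisms compose. Closure under the composition of 2-cells along a common 1-morphism follows by specialising the horizontal composition (\ref{eq:squarehorizcomp}) to $g = f = Id_x$: the result is $\squaremor{(\gamma_1,\chi_2\chi_1),Id_x}$, whose datum $\Phi_{(\gamma_1,\chi_2\chi_1)}(x)$ equals $\Phi_{(\gamma_2,\chi_2)}(x)\circ\Phi_{(\gamma_1,\chi_1)}(x)$ by the first F1 condition of Definition~\ref{def:2grp_action}, hence is a composite of identities. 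Closure under composition of the underlying vertical 1-morphisms follows by specialising the vertical composition (\ref{eq:squarevertcomp}) to $f = Id_x$: using $\Phi_{(\gamma_3,\chi_2)}(x) = Id$ the top edge of the second factor is again an identity, the composite is $\squaremor{(\gamma_1\gamma_3,\chi_1(\gamma_1\rhd\chi_2)),Id_x}$, and its datum $\Phi_{(\gamma_1\gamma_3,\chi_1(\gamma_1\rhd\chi_2))}(x)$ equals $\Phi_{(\gamma_1,\chi_1)}(\Phi_{\gamma_4}(x))\circ\Phi_{\gamma_1}(\Phi_{(\gamma_3,\chi_2)}(x))$ with $\gamma_4=\partial(\chi_2)\gamma_3$ by the second F2 condition, again a composite of identities (the second factor by functoriality of $\Phi_{\gamma_1}$, using that the hypotheses force $\gamma_4 \act x = \gamma_3\act x$).

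The main thing to watch — the one genuine bookkeeping obstacle — is that, because of the transpose, the roles of (\ref{eq:squarehorizcomp}) and (\ref{eq:squarevertcomp}) as ``composition of 2-cells'' versus ``composition of 1-cells'' in $V(\C \wquot \G)$ are interchanged relative to $H(\C \wquot \G)$, so one must track carefully which source/target matching is being invoked at each step; this is exactly where the F1 and F2 conditions (stating that the relevant natural transformations assembled from identities are themselves identities) are used, and where the asymmetry with Theorem~\ref{thm:horiztwocat} arises (being an identity in $\C^{(0)} \wquot \G^{(0)}$ only constrains the $G$-component, whereas being an identity morphism in $\C$ is a genuine condition, forcing $\Phi_{(\gamma,\chi)}(x)$ to be literally $Id_{\gamma\act x}$). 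Beyond this, the computation is routine and entirely parallel to that of Theorem~\ref{thm:horiztwocat}.
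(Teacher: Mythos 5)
Your proposal is correct and follows essentially the same route as the paper: identify the 2-cells as squares $\squaremor{(\gamma,\chi),Id_x}$ with both horizontal edges identities, compute the bottom edge as $Id_{\gamma'\act x}\circ\Phi_{(\gamma,\chi)}(x)$ via Definition~\ref{def:act-notation} and functoriality, and read off the condition $\Phi_{(\gamma,\chi)}(x)=Id_{\gamma\act x}$ with $\gamma'=\partial(\chi)\gamma$. Your explicit verification that these squares are closed under the two compositions (via F1 and F2) is a welcome elaboration of the paper's one-line remark that ``the compositions again follow (\ref{eq:squarehorizcomp}) and (\ref{eq:squarevertcomp})''.
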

\begin{proof}
  The objects and 1-morphisms are those of the vertical category,
  which is $\C^{(0)} \wquot \G^{(0)}$ by Theorem
  \ref{thm:transposetransform}.

  The 2-morphisms correspond to squares of the form
  (\ref{eq:squaredef}) in which $x=y$ and the horizontal morphisms on
  the boundary are both identities. Since the vertical source is
  $f=Id_x$, they are squares of the form
  $\squaremor{(\gamma,\chi),Id_x}$.

  Moreover, the vertical target must also be the identity. Now, to
  begin with, this must mean that $\gamma \act x = \gamma' \act x$
  (taking $\gamma' = \partial(\chi)\gamma$), as stated in the
  theorem. This vertical target is the horizontal morphism
  \begin{eqnarray}
    (\gamma,\chi) \act f & = & (\gamma'\act Id_x) \circ \Phi_{(\gamma,\chi)}(x) \\
    \nonumber & = & Id_{\gamma' \act x} \circ \Phi_{(\gamma,\chi)}(x)
  \end{eqnarray}
  The second equality holds because $\gamma'$
  acts on $f = Id_x$ by the functor $\Phi_{\gamma'}$, and functors
  respect identities. Thus, this condition implies that not only are
  $\gamma \act x$ and $\gamma' \act x$ equal, but indeed the natural
  transformation $\Phi_{(\gamma,\chi)}$ intertwines them by the
  identity on $\gamma \act x$.

  The compositions again follow (\ref{eq:squarehorizcomp}) and
  (\ref{eq:squarevertcomp}).
\end{proof}

Intuitively, the transformation groupoid $\C^{(0)} \wquot \G^{(0)}$
displays the object part of the action $\hat{\Phi}$. The morphism part
can potentially appear here also, relating local symmetry
transformations which carry $x$ to the same place. However, as we have
seen, such a relation need not exist if $\gamma \neq \gamma '$, since
even though $\gamma \act x = \gamma' \act x$, there might be no
natural transformation intertwining them, or none intertwining them
with identities. Conversely, there may be more than one such natural
transformation. This is the information about the full action $\Phi$
captured by $V(\C \wquot \G)$ which the mere groupoid $\C^{(0)}
\wquot \G^{(0)}$ cannot see.

We would next like to explicitly describe the above, by examining the
structure of the transformation double groupoid $\G \wquot \G$,
associated to our example of the adjoint action of a 2-group $\G$ on
itself.

\subsection{The Transformation Double Category for the Adjoint
  Action}\label{sec:trans-adjoint}

As in all transformation double groupoids, the horizontal category of
$\G \wquot \G$ is just the same as $\G$, seen as a category 
(which happens to be monoidal).
The vertical morphisms and squares will always be labelled by pairs
of, respectively, objects and morphisms from $\G$.

Summing up the above, we have:

\begin{corollary}\label{thm:gmodg}
The transformation double groupoid $\G
\wquot \G$ for the adjoint action of a 2-group $\G$, classified by the
crossed module $(G,H,\rhd,\partial)$, has:
\begin{itemize}
\item Objects: labelled by $g \in G$
\item Horizontal Category: the underlying category of $\G$
\item Vertical Category: same as $G \wquot G$, the transformation
  groupoid for the adjoint action of $G$
\item Squares: labelled by pairs $((\gamma_1,\chi_1),(g_1,\eta_1)) \in
  (G \times H) \times (G \times H)$, denoted by $\squaremor{
    (\gamma_1, \chi_1),(g_1, \eta_1) }$. As in (\ref{eq:squaredef}),
  the horizontal and vertical source and target of this square are displayed as follows:
  \begin{equation}\label{eq:adjoint-action-square}
    \xymatrix@C=7.5pc@R=4pc {
      g_1 \ar[r]^{\eta_1} \ar[d]_{\Phi_{\gamma_1}} \drtwocell<\omit>{\omit *+[F]{(\gamma_1,\chi_1),(g_1,\eta_1)}}  &  g_2 \ar[d]^{\Phi_{\gamma_2}}  \\
      g_3 \ar[r]_{\eta_2}  & g_4
    }   
  \end{equation}
  where $\gamma_2=\partial(\chi_1)\gamma_1$. Here $\eta_2$ is determined by:
  \begin{equation}
    \xybiglabels \vcenter{\xymatrix@M=0pt@=3pc{\ar@{-} [d]  \ar@{-} [r]^{g_3} \ar@{} [dr]|{\eta_2} & \ar@{-} [d] \\ \ar@{-} [r]_{g_4}  & }}
    \,  = \,
    \xybiglabels \vcenter{\xymatrix @=3pc @W=0pc @M=0pc { \ar@{-}[r] ^{\gamma_1} \ar@{-}[d]_{} \ar@{}[dr]|{\chi_1} & \ar@{-}[r] ^{g_1} \ar@{-}[d]|{}
        \ar@{}[dr]|{\eta_1} & \ar@{-}[r] ^{\gamma_1^{-1}} \ar@{-}[d]^{}  \ar@{}[dr]|{\chi_1^{-h}} & \ar@{-}[d]^{}
        \\ \ar@{-}[r] _{\gamma_2} & \ar@{-}[r] _{g_2} & \ar@{-}[r] _{\gamma_2^{-1}} &
      }}.
  \end{equation}

\item Horizontal composition of squares is given by multiplication in
  $H$:
  \begin{equation}
    \squaremor{ (\gamma_2, \chi_2),(g_2, \eta_2)} 
    \, \circ_h \, \squaremor{ (\gamma_1, \chi_1),(g_1, \eta_1) }
    \, = \, \squaremor{ (\gamma_1, \chi_2\chi_1),(g_1, \eta_2\eta_1)}
  \end{equation}
  where $g_2=\partial(\eta_1)g_1$, and vertical composition of squares  is given by:
  \begin{eqnarray}    
 \squaremor{ (\gamma_1, \chi_1),(\gamma_3,\chi_2)\act (g_1,\eta_2) } 
  &  \circ_v &  \squaremor{ (\gamma_3, \chi_2),(g_1, \eta_2) }   \, = \, \nonumber\\
& & \nonumber\\
& &  \squaremor{ (\gamma_1\gamma_3, \chi_1(\gamma_1\rhd\chi_2)),(g_1, \eta_2) } 
  \end{eqnarray} 
  which makes them into the transformation groupoid $(G \ltimes H) \wquot (G \ltimes H)$,
\end{itemize}
\end{corollary}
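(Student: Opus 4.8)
The plan is to obtain Corollary~\ref{thm:gmodg} as a transcription of the general description of the transformation double category (Definition~\ref{def:trans-dbl-cat}, which has already been shown to be a well-defined double category, together with Theorem~\ref{thm:transposetransform}) into the special case of the adjoint action of Definition~\ref{def:adjoint}, the one substantive input being the explicit formula \eqref{eq:Phihat-adjointformula} established in Section~\ref{adjoint}.

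First I would dispose of the items that are immediate. By Definition~\ref{def:trans-dbl-cat} the objects are $O=\C^{(0)}$, which for the adjoint action is the object set of $\G$ viewed as a category, i.e. $G$; the horizontal category is $\C$ itself, here the underlying (monoidal) category of $\G$, whose morphisms in crossed-module terms are pairs $(g,\eta)\in G\times H$ composing by \eqref{cm5}. The vertical category is $\C^{(0)}\wquot\G^{(0)}$; since $\G^{(0)}=G$ acts on $\C^{(0)}=G$ by $\gamma\act g=\Phi_\gamma(g)=\gamma g\gamma^{-1}$ according to \eqref{eq:adjoint-functor-ob}, this is exactly the transformation groupoid $G\wquot G$ for the adjoint action of $G$ in the sense of Definition~\ref{def:transfngroupoid}. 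The square set is $S=\C^{(1)}\times\G^{(1)}=(G\times H)\times(G\times H)$, and the displayed horizontal and vertical sources and targets of a square $\squaremor{(\gamma_1,\chi_1),(g_1,\eta_1)}$ are read off from \eqref{eq:squaredef}: the vertical edges are $\Phi_{\gamma_1}$ and $\Phi_{\gamma_2}$ with $\gamma_2=\partial(\chi_1)\gamma_1$, and the bottom edge $\eta_2$ is the $H$-component of $(\gamma_1,\chi_1)\act(g_1,\eta_1)$, which is precisely the square computed in \eqref{eq:Phi-2-square} pasted with the image of the $(g_1,\eta_1)$-square under $\Phi_{\gamma_1}$ --- i.e. the $3$-square array in ${\cal D}(\G)$ displayed in the corollary (this also confirms the source/target of $\eta_2$ are consistent with the corner labels $g_3,g_4$, since that array was already verified to have the correct source and target).

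Next I would treat the composition laws. Horizontal composition is the morphism map of $\circ$, i.e. \eqref{eq:squarehorizcomp} with ``$g\circ f$'' taken in $\C=\G$; by \eqref{cm5} this is $(g_1,\eta_2\eta_1)$ when $g_2=\partial(\eta_1)g_1$, giving the stated formula. Vertical composition is \eqref{eq:squarevertcomp} verbatim with $\act$ the adjoint action, so nothing new is needed there. The one genuinely computational point --- and the step I expect to be the main obstacle, though it is short --- is the identification of the squares, under vertical composition, with the ordinary transformation groupoid $(G\ltimes H)\wquot(G\ltimes H)$. By Theorem~\ref{thm:transposetransform} these squares form the transformation groupoid $\C^{(1)}\wquot\G^{(1)}$ associated to the action of $\G^{(1)}$ on $\C^{(1)}$; here both $\C^{(1)}$ and $\G^{(1)}$ are the group $G\ltimes H$ with multiplication \eqref{cm6}, so it remains to check that the relevant action of $G\ltimes H$ on itself is conjugation. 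This is exactly what \eqref{eq:Phihat-adjointformula} says: expanding $(\gamma,\chi)(g,\eta)(\gamma,\chi)^{-1}$ in $G\ltimes H$, using $(\gamma,\chi)^{-1}=(\gamma^{-1},\gamma^{-1}\rhd\chi^{-1})$ and that $\rhd$ is an action of $G$ on $H$ by automorphisms (so $(\gamma g)\rhd(\gamma^{-1}\rhd\chi^{-1})=(\gamma g\gamma^{-1})\rhd\chi^{-1}$), yields $(\gamma g\gamma^{-1},\ \chi(\gamma\rhd\eta)(\gamma g\gamma^{-1})\rhd\chi^{-1})=(\gamma,\chi)\act(g,\eta)$.

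Finally, with this identification in hand, all remaining assertions --- that the squares with the two composition operations satisfy the double-category axioms (associativity, unit laws, interchange, and the compatibility of the various targets) --- require no further work, being a special case of the already-proved theorem that $\C\wquot\G$ is a well-defined double category; the only care needed is bookkeeping, matching the conventions of \eqref{eq:squaredef} and \eqref{eq:adjoint-action-square} and observing that the targets which must agree reduce, as before, to crossed-module identities already invoked. Thus the corollary follows by specialization, with \eqref{eq:Phihat-adjointformula} supplying the one genuine computation.
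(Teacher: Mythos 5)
Your proposal is correct and follows essentially the same route as the paper, whose proof is a one-line appeal to specializing Definition~\ref{def:action-double-gpd} to the adjoint action. The only difference is that you explicitly verify that \eqref{eq:Phihat-adjointformula} is conjugation in $G\ltimes H$ (so that the squares form the transformation groupoid $(G\ltimes H)\wquot(G\ltimes H)$ for the adjoint action), a correct computation that the paper leaves implicit in the remark following Definition~\ref{def:adjoint}.
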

\begin{proof}
  This is a direct consequence of the structure of $\C \wquot \G$ laid
  out in Definition \ref{def:action-double-gpd} and Theorem \ref{thm:trans-dbl-cat}, with the structure of
  $\G$ as a categorical group given explicitly for $\C$.
\end{proof}

Now we will find the horizontal and vertical 2-categories, $H(\G
\wquot \G)$ and $V(\G \wquot \G)$, of the double category associated
to the adjoint action of $\G$

\begin{corollary}
  If $\G$ is a 2-group classified by the crossed module
  $(G,H,\rhd,\partial)$, then the horizontal 2-category $H(\G \wquot
  \G)$ associated to the adjoint action of $\G$ has:
  \begin{itemize}
    \item \textbf{Objects}: $g \in G$
    \item \textbf{Morphisms}: $(g,\eta) \in G \ltimes H$ with source,
      target, and composition as in $\G$
    \item \textbf{2-Morphisms}: $((g,\eta),\chi) \in (G \ltimes H)
      \times \opname{ker}(\partial)$, whose source is $(g,\eta)$, and whose target is
      \begin{equation}\label{eq:adjointhoriztwocell}
        (g,\eta ') = ( g , \eta \cdot (\chi g \rhd \chi^{-1} ) )
      \end{equation}
      Composition is given by multiplication in the subgroup
      $\opname{ker}(\partial) \subset H$.
  \end{itemize}
\end{corollary}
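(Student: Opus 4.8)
The plan is to deduce this corollary directly from Theorem~\ref{thm:horiztwocat} applied in the case $\C = \G$, with $\Phi$ the adjoint $2$-functor of Definition~\ref{def:adjoint}; once the general theorem is in hand, what remains is to substitute the explicit crossed-module formulas. First, the horizontal category of $\G \wquot \G$ is $\G$ regarded as a monoidal category, so by Theorem~\ref{thm:horiztwocat} the objects and $1$-morphisms of $H(\G \wquot \G)$ are exactly those of the underlying category of the crossed module: objects $g \in G$, and morphisms $(g,\eta) \in G \ltimes H$ with source, target, identities and composition as in Definition~\ref{defCG}.

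Next I would identify the $2$-morphisms. By Theorem~\ref{thm:horiztwocat} a $2$-morphism with source $f : x \ra y$ is a square $\squaremor{(1_G,\chi),f}$ whose vertical boundary morphisms are identities; since the horizontal source and target of such a square are $(1_G,x) = Id_x$ and $(\partial(\chi),y) = Id_y$, this forces $\partial(\chi) = 1_G$, i.e. $\chi \in \opname{ker}(\partial)$. Writing $f = (g,\eta)$, the $2$-morphism is thus labelled by $((g,\eta),\chi) \in (G \ltimes H) \times \opname{ker}(\partial)$, with source $(g,\eta)$, as claimed.

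For the target I would use (\ref{eq:horiz2mortarget}), $f' = f \circ \Phi_{(1_G,\chi)}(x)$. Setting $\gamma = 1_G$ in the adjoint natural transformation (\ref{eq:adjoint-nat-trans}) gives $\Phi_{(1_G,\chi)}(g) = (g,\ \chi\,(g \rhd \chi^{-1}))$. The one point that needs care is that this is a morphism one may legitimately precompose with $f$: using the crossed-module axiom (\ref{cm1}) one has $\partial(g \rhd \chi^{-1}) = g\,\partial(\chi^{-1})\,g^{-1} = 1_H$ since $\chi \in \opname{ker}(\partial)$, so $\partial(\chi\,(g \rhd \chi^{-1})) = 1_G$ and $\Phi_{(1_G,\chi)}(g)$ is an endomorphism of $g$, hence composable with $f$. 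Evaluating the composite via the crossed-module composition rule (\ref{cm5}) then yields $f' = (g,\ \eta \cdot \chi\,(g \rhd \chi^{-1}))$, which is exactly (\ref{eq:adjointhoriztwocell}). Alternatively, this can be read off from the $5$-square array underlying (\ref{eq:Phi-2-square}), specialized to $\gamma_1 = \gamma_2 = 1_G$.

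Finally, for the composition of $2$-morphisms, Theorem~\ref{thm:horiztwocat} tells us they compose, both horizontally and vertically, exactly as the $2$-endomorphisms of $1_G$ in $\G$. In crossed-module terms $Hom_{\G}(1_G,1_G)$ is the set of pairs $(1_G,\chi)$ with $\partial(\chi)\,1_G = 1_G$, i.e. it is $\opname{ker}(\partial)$, and the composition it inherits from $\G$ is simply the group multiplication of $\opname{ker}(\partial) \subset H$; one may double-check this by putting all $\gamma_i = 1_G$ in (\ref{eq:squarehorizcomp}) and (\ref{eq:squarevertcomp}), whereupon both rules collapse to $\chi_2\chi_1$, the horizontal and vertical versions agreeing because $\opname{ker}(\partial)$ is central in $H$ (the Eckmann--Hilton phenomenon recalled in Section~\ref{sec:2group-2cat}). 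The only genuine obstacle in the argument is the bookkeeping in the previous paragraph --- confirming that $\Phi_{(1_G,\chi)}(g)$ has the correct source and target, and tracking the order of multiplication in $H$ --- so that the target comes out as the stated element and not its conjugate; everything else is a direct specialization of the general theorem.
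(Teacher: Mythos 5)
Your proposal is correct and follows essentially the same route as the paper: specialize Theorem \ref{thm:horiztwocat} to the adjoint action, substitute $\gamma = 1_G$ into (\ref{eq:adjoint-nat-trans}), and compose via (\ref{cm5}) to obtain (\ref{eq:adjointhoriztwocell}), with composition of 2-morphisms inherited from the theorem. The extra checks you include (that $\partial(\chi\,(g\rhd\chi^{-1})) = 1_G$ so the precomposition is legitimate, and the Eckmann--Hilton remark on $\opname{ker}(\partial)$) are sound, modulo the harmless typo $1_H$ for $1_G$ as the value of $\partial(g\rhd\chi^{-1})$.
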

\begin{proof}
  Taking the expression (\ref{eq:horiz2mortarget}) for the target of a
  2-morphism, in the case of the adjoint action we use the fact that
  $\Phi_{(1_G,\chi)}(g)$ is defined in
  (\ref{eq:adjoint-nat-trans}). Substituting this gives the target specified
  in (\ref{eq:adjointhoriztwocell}). Composition as multiplication in
  $H$ also follows from Theorem \ref{thm:horiztwocat}.
\end{proof}

This tells us how to extend $\G$ to a 2-category to reflect part of
the symmetry that arises from its action on itself. The vertical
2-category, by contrast, can be better seen as an extension of $G
\wquot G$, the transformation groupoid for the actions of just the
group of objects on itself. In particular, we have the following.

\begin{corollary}
    If $\G$ is a 2-group classified by the crossed module
    $(G,H,\rhd,\partial)$, then the vertical 2-category $V(\G \wquot
    \G)$ associated to the adjoint action of $\G$ has:
    \begin{itemize}
      \item \textbf{Objects}: $g \in G$
      \item \textbf{Morphisms}: $(\gamma,g) \in G \times G$ with
        source, target, and composition maps as in the transformation
        groupoid for the adjoint action of $G$ on itself
      \item \textbf{2-Morphisms}: There will be a 2-morphism from
        $(\gamma,g)$ to $(\gamma',g)$, for each $\chi \in H$ such that 
        \begin{equation}
          (\gamma g \gamma^{-1}) \rhd \chi^{-1} = \chi^{-1}
        \end{equation}
        (That is, $\chi^{-1}$ is a fixed point for $\gamma g
        \gamma^{-1}$ under the action $\rhd$).
    \end{itemize}
\end{corollary}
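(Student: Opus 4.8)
The plan is to obtain this corollary by specializing Theorem~\ref{thm:verttwocat} to the adjoint action, feeding in the explicit natural transformations $\Phi_{(\gamma,\chi)}$ of Definition~\ref{def:adjoint}. The objects and 1-morphisms require no work: by Theorem~\ref{thm:verttwocat} the vertical 2-category $V(\G \wquot \G)$ has the same objects and 1-morphisms as the vertical category of $\G \wquot \G$, which by Corollary~\ref{thm:gmodg} is $G \wquot G$, the transformation groupoid of the ordinary adjoint action of $G$ on itself. This immediately gives objects $g \in G$ and 1-morphisms $(\gamma,g) \in G \times G$ with source $g$, target $\gamma g \gamma^{-1}$, and composition $(\gamma',\gamma g\gamma^{-1})\circ(\gamma,g) = (\gamma'\gamma,g)$.

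The substance is the description of the 2-morphisms. By Theorem~\ref{thm:verttwocat}, a 2-morphism from $(\gamma,g)$ to $(\gamma',g)$ is a label $\chi$ with $\gamma' = \partial(\chi)\gamma$ such that $\Phi_{(\gamma,\chi)}(g) = Id_{\gamma \act g}$. First I would substitute the adjoint-action formula (\ref{eq:adjoint-nat-trans}), $\Phi_{(\gamma,\chi)}(g) = (\gamma g \gamma^{-1},\, \chi\,((\gamma g \gamma^{-1})\rhd\chi^{-1}))$, and note that the identity morphism at $\gamma \act g = \gamma g\gamma^{-1}$ in the category $\G$ is $(\gamma g\gamma^{-1}, 1_H)$. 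The $G$-components agree automatically, so the condition collapses to the single equation $\chi\,((\gamma g\gamma^{-1})\rhd\chi^{-1}) = 1_H$ in $H$, i.e.\ $(\gamma g\gamma^{-1})\rhd\chi^{-1} = \chi^{-1}$, which is exactly the fixed-point condition asserted.

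It then remains to check the consistency requirement implicit in Theorem~\ref{thm:verttwocat}: that such a $\chi$ really does determine a well-formed square, i.e.\ that $\gamma \act g = \gamma' \act g$ once $\gamma' = \partial(\chi)\gamma$. I would verify this by applying $\partial$ to the fixed-point equation and invoking crossed-module axiom (\ref{cm1}): since $\partial((\gamma g\gamma^{-1})\rhd\chi^{-1}) = (\gamma g\gamma^{-1})\partial(\chi)^{-1}(\gamma g\gamma^{-1})^{-1}$, the condition forces $\partial(\chi)$ to commute with $\gamma g\gamma^{-1}$, so that $\gamma' g (\gamma')^{-1} = \partial(\chi)\gamma g\gamma^{-1}\partial(\chi)^{-1} = \gamma g\gamma^{-1}$, as needed. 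Composition of 2-morphisms then follows from (\ref{eq:squarevertcomp}) and (\ref{eq:squarehorizcomp}) exactly as in the proof of Theorem~\ref{thm:verttwocat}, under the identification of morphisms of $\G$ with pairs in $G \ltimes H$.

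The proof involves no serious obstacle; it is essentially bookkeeping with the crossed-module formula for the natural transformations $\Phi_{(\gamma,\chi)}$. The one point that calls for genuine care is the last one --- confirming that the bare fixed-point condition on $\chi$ already guarantees $\gamma \act g = \gamma' \act g$, so that no hypothesis relating $\gamma$ to $\gamma'$ beyond $\gamma' = \partial(\chi)\gamma$ is required --- and this is precisely where axiom (\ref{cm1}) is used.
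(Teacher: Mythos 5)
Your proof is correct and follows essentially the same route as the paper: specialize Theorem~\ref{thm:verttwocat} to the adjoint action, substitute the explicit formula (\ref{eq:adjoint-nat-trans}) to turn the condition $\Phi_{(\gamma,\chi)}(g) = Id_{\gamma g\gamma^{-1}}$ into $\chi\,((\gamma g\gamma^{-1})\rhd\chi^{-1}) = 1_H$, i.e.\ the stated fixed-point condition, and inherit the compositions from (\ref{eq:squarehorizcomp}) and (\ref{eq:squarevertcomp}). Your additional verification via axiom (\ref{cm1}) that the fixed-point condition already forces $\gamma \act g = \gamma' \act g$ is a correct consistency check that the paper leaves implicit.
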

\begin{proof}
  In Theorem \ref{thm:verttwocat}, we established when there is a
  2-morphism in $V(\C \wquot \G)$. In our case, taking $x = g$, we find
  there is such a 2-morphism for any $\chi$ such that
  $\Phi_{(\gamma,\chi)}(g) = Id_{\gamma g \gamma^{-1}}$.

  But we have the expression (\ref{eq:adjoint-nat-trans}) for the maps
  defining the natural transformation $\Phi_{(\gamma,\chi)}$. So this
  condition says that
  \begin{equation}
    \chi (\gamma g \gamma^{-1}) \rhd \chi^{-1} = 1_H
  \end{equation}
  or in other words, that
  \begin{equation}
    (\gamma g \gamma^{-1}) \rhd \chi^{-1} = \chi^{-1}
  \end{equation}
  This is exactly the statement above.

  As usual, composition follows (\ref{eq:squarehorizcomp}) and
  (\ref{eq:squarevertcomp}).
\end{proof}

Intuitively, this tells us that the information about the action
$\Phi$ captured by the 2-morphisms relating two different symmetry
relations $\gamma$ and $\gamma '$ taking $g$ to $g$ is fundamentally
information about the crossed module structure, and the action $\rhd$
of $G$ on $H$.

We have included these calculations of the horizontal and vertical
2-categories, partly because these particular slices of the double
category $\G \wquot \G$ (or $\C \wquot \G$ generally) are interesting
in their own right. But we also include them to illustrate that many
squares in the double category do not appear in either the horizontal
or vertical 2-categories. Each is missing a great deal of information
about the action $\Phi$, and we need to go to the
double category $\G \wquot \G$ (or $\C \wquot \G$ in general)
for the full information.


\begin{references*}

\bibitem{hda5}
{\sc {Baez}, J.~C., and {Lauda}, A.~D.}
\newblock {Higher-dimensional algebra. V: 2-Groups.}
\newblock {\em {Theory Appl. Categ.} 12\/} (2004), 423--491.

\bibitem{baez-perez}
{\sc {Baez}, J.~C., and {Perez}, A.}
\newblock {Quantization of strings and branes coupled to BF theory.}
\newblock {\em {Adv. Theor. Math. Phys.} 11}, 3 (2007), 451--469.

\bibitem{basch-hgt}
{\sc {Baez}, J.~C., and {Schreiber}, U.}
\newblock {Higher gauge theory.}
\newblock In {\em {Categories in algebra, geometry and mathematical physics.
  Conference and workshop in honor of Ross Street's 60th birthday, Sydney and
  Canberra, Australia, July 11--16/July 18--21, 2005}}. Providence, RI:
  American Mathematical Society (AMS), 2007, pp.~7--30.

\bibitem{borceux}
{\sc Borceux, F.}
\newblock {\em Handbook of Categorical Algebra I}.
\newblock No.~50 in Encyclopedia of Mathematics and its Applications. Cambridge
  University Press, 1994.

\bibitem{brown-higgins-sivera}
{\sc {Brown}, R., {Higgins}, P.~J., and {Sivera}, R.}
\newblock {\em {Nonabelian algebraic topology. Filtered spaces, crossed
  complexes, cubical homotopy groupoids. With contributions by Christopher D.
  Wensley and Sergei V. Soloviev.}}
\newblock Z\"urich: European Mathematical Society (EMS), 2011.

\bibitem{brown-mackenzie}
{\sc {Brown}, R., and {Mackenzie}, K.~C.}
\newblock {Determination of a double Lie groupoid by its core diagram.}
\newblock {\em {J. Pure Appl. Algebra} 80}, 3 (1992), 237--272.

\bibitem{brownspencer}
{\sc {Brown}, R., and {Spencer}, C.~B.}
\newblock {Double groupoids and crossed modules.}
\newblock {\em {Cah. Topologie G\'eom. Diff\'er. Cat\'egoriques} 17\/} (1976),
  343--362.

\bibitem{brownspencer2}
{\sc {Brown}, R., and {Spencer}, C.~B.}
\newblock {$\mathcal G$-groupoids, crossed modules and the fundamental groupoid
  of a topological group.}
\newblock {\em {Nederl. Akad. Wet., Proc., Ser. A} 79\/} (1976), 296--302.

\bibitem{burciu-natale}
{\sc {Burciu}, S., and {Natale}, S.}
\newblock {Fusion rules of equivariantizations of fusion categories.}
\newblock {\em {J. Math. Phys.} 54}, 1 (2013), 013511, 21.

\bibitem{chenglauda}
{\sc Cheng, E., and Lauda, A.}
\newblock Higher-dimensional categories: An illustrated guidebook.
\newblock
  {\tt{http://www.math.uchicago.edu/~eugenia/guidebook/guidebook-new.pdf}},
  2004.

\bibitem{quintets}
{\sc {Ehresmann}, C.}
\newblock {Categorie double quintettes; applications covariantes.}
\newblock {\em {C. R. Acad. Sci., Paris} 256\/} (1963), 1891--1894.

\bibitem{ehresmann}
{\sc {Ehresmann}, C.}
\newblock {Categories doubles et categories structurees.}
\newblock {\em {C. R. Acad. Sci., Paris} 256\/} (1963), 1198--1201.

\bibitem{elgueta}
{\sc Elgueta, J.}
\newblock Permutation 2-groups {I}: structure and splitness.
\newblock {\em Adv. Math. 258\/} (2014), 286--350.

\bibitem{giraud}
{\sc {Giraud}, J.}
\newblock {Cohomologie non abelienne. (Non-Abelian cohomology).}
\newblock {Die Grundlehren der mathematischen Wissenschaften. Band 179.
  Berlin-Heidelberg-New York: Springer-Verlag.}, 1971.

\bibitem{kirillov}
{\sc {Kirillov jun.}, A.}
\newblock {Modular categories and orbifold models.}
\newblock {\em {Commun. Math. Phys.} 229}, 2 (2002), 309--335.

\bibitem{lack-bicat}
{\sc {Lack}, S.}
\newblock {A 2-categories companion.}
\newblock In {\em {Towards higher categories}}. Berlin: Springer, 2010,
  pp.~105--191.

\bibitem{leinster-bb}
{\sc Leinster, T.}
\newblock Basic bicategories.
\newblock {\tt{arXiv:math/9810017}}.

\bibitem{leinster}
{\sc {Leinster}, T.}
\newblock {\em {Higher operads, higher categories.}}
\newblock Cambridge: Cambridge University Press, 2004.

\bibitem{martinspickenii}
{\sc {Martins}, J.~F., and {Picken}, R.}
\newblock {Surface holonomy for non-Abelian 2-bundles via double groupoids.}
\newblock {\em {Adv. Math.} 226}, 4 (2011), 3309--3366.

\bibitem{martinspickeni}
{\sc {Martins}, J.~F., and {Picken}, R.}
\newblock {The fundamental Gray 3-groupoid of a smooth manifold and local
  3-dimensional holonomy based on a 2-crossed module.}
\newblock {\em {Differ. Geom. Appl.} 29}, 2 (2011), 179--206.

\bibitem{martinsporter}
{\sc {Martins}, J.~F., and {Porter}, T.}
\newblock {On Yetter's invariant and an extension of the Dijkgraaf-Witten
  invariant to categorical groups.}
\newblock {\em {Theory Appl. Categ.} 18\/} (2007), 118--150.

\bibitem{morton-etqft}
{\sc Morton, J.~C.}
\newblock Cohomological twisting of 2-linearization and extended {TQFT}.
\newblock {\em J. Homotopy Relat. Struct. 10}, 2 (2015), 127--187.

\bibitem{murray-gerbes}
{\sc {Murray}, M.}
\newblock {Bundle gerbes.}
\newblock {\em {J. Lond. Math. Soc., II. Ser.} 54}, 2 (1996), 403--416.

\bibitem{naidu}
{\sc {Naidu}, D.}
\newblock {Crossed pointed categories and their equivariantizations.}
\newblock {\em {Pac. J. Math.} 247}, 2 (2010), 477--496.

\bibitem{nlab-doublecat}
{\sc nCategories Lab~Wiki}.
\newblock Double category.
\newblock Retrieved Dec 19, 2013.

\bibitem{schreiber-sati-stasheff}
{\sc {Sati}, H., {Schreiber}, U., and {Stasheff}, J.}
\newblock {Twisted differential string and fivebrane structures.}
\newblock {\em {Commun. Math. Phys.} 315}, 1 (2012), 169--213.

\bibitem{schreiberwaldorfii}
{\sc {Schreiber}, U., and {Waldorf}, K.}
\newblock {Smooth functors vs. differential forms.}
\newblock {\em {Homology Homotopy Appl.} 13}, 1 (2011), 143--203.

\bibitem{weinstein-symmetry}
{\sc {Weinstein}, A.}
\newblock {Groupoids: unifying internal and external symmetry. A tour through
  some examples.}
\newblock In {\em {Groupoids in analysis, geometry, and physics. AMS-IMS-SIAM
  joint summer research conference, University of Colorado, Boulder, CO, USA,
  June 20--24, 1999}}. Providence, RI: American Mathematical Society (AMS),
  2001, pp.~1--19.

\bibitem{yettqft}
{\sc {Yetter}, D.~N.}
\newblock {Topological quantum field theories associated to finite groups and
  crossed $G$-sets.}
\newblock {\em {J. Knot Theory Ramifications} 1}, 1 (1992), 1--20.

\end{references*}

\end{document}